\newtheorem{thm}{Theorem}[section]
\newtheorem{lem}[thm]{Lemma}
\newtheorem{prop}[thm]{Proposition}
\newtheorem{cor}[thm]{Corollary}
\theoremstyle{definition}
\newtheorem{defn}[thm]{Definition}
\newtheorem{ex}[thm]{Example}
\newtheorem*{rem}{Remark}
\DeclareMathOperator{\R}{\mathbb R}
\DeclareMathOperator{\C}{\mathbb C}
\DeclareMathOperator{\Z}{\mathcal Z}
\DeclareMathOperator{\I}{\mathcal I}
\DeclareMathOperator{\SR}{\mathcal K^0}
\DeclareMathOperator{\SRR}{\mathcal R^0}
\DeclareMathOperator{\SO}{\mathcal O}
\DeclareMathOperator{\Pol}{\mathcal P}
\DeclareMathOperator{\K}{\mathcal K}
\DeclareMathOperator{\PP}{\mathbb P}
\DeclareMathOperator{\supp}{supp}
\DeclareMathOperator{\Sing}{Sing}
\DeclareMathOperator{\Cent}{Cent}
\DeclareMathOperator{\RCent}{R-Cent}
\DeclareMathOperator{\RSp}{R-Spec}
\DeclareMathOperator{\dom}{dom}
\DeclareMathOperator{\Sp}{Spec}
\DeclareMathOperator{\ReSp}{R-Spec}
\DeclareMathOperator{\MSp}{\rm Max}
\DeclareMathOperator{\pol}{indet}
\DeclareMathOperator{\Ker}{Ker}
\DeclareMathOperator{\p}{\mathfrak{p}}
\DeclareMathOperator{\q}{{\mathfrak q}}
\DeclareMathOperator{\ir}{{\mathfrak r}}
\DeclareMathOperator{\Max}{\rm Max}
\DeclareMathOperator{\ReMax}{\rm R-Max}
\DeclareMathOperator{\m}{\mathfrak m}
\def\JRadCe {{\rm{Rad}^C}}
\def\JRad {{\rm{Rad}}}
\def\hyp {\textbf{\textrm{\textsc{(mp)}}}}
\begin{document}

\title[\tiny{Weak and semi normalization in real algebraic
  geometry}]{Weak and semi normalization in real algebraic
  geometry}
\author{Goulwen Fichou, Jean-Philippe Monnier and Ronan Quarez}
\thanks{The authors benefit from the support of the Centre Henri Lebesgue ANR-11-LABX-0020-01. The first author is deeply grateful to the UMI PIMS of the CNRS for its hospitality during part of this project.}

\address{Goulwen Fichou\\
Univ Rennes, CNRS, IRMAR - UMR 6625, F-35000 Rennes, France}
\email{goulwen.fichou@univ-rennes1.fr}

\address{Jean-Philippe Monnier\\
   LUNAM Universit\'e, LAREMA, Universit\'e d'Angers}
\email{jean-philippe.monnier@univ-angers.fr}

\address{Ronan Quarez\\
Univ Rennes\\
Campus de Beaulieu, 35042 Rennes Cedex, France}
\email{ronan.quarez@univ-rennes1.fr}
\date\today
\subjclass[2010]{14P99,13B22,26C15}
\keywords{real algebraic geometry, normalization, continuous rational functions, weak-normalization, seminormalization}

\begin{abstract} We define the weak-normalization and the seminormalization of a real algebraic variety relative to its central locus. The study is related to the properties of the rings of continuous rational functions and hereditarily rational functions on real algebraic varieties. We provide in particular several characterizations (algebraic or geometric) of these varieties, and provide a full description of centrally seminormal curves. 
\end{abstract}

\maketitle


The present paper is devoted to the study of the seminormalization of a real algebraic variety. The complex setting is settled for more than fifty years, first in the analytic case in \cite{AN} for studying the Chow variety of a complex analytic variety, and more generally for scheme in \cite{AB}.
Recently, the concept of seminormalization
appears in the study of the singularities in the minimal model program by Koll\'ar and Kov\'acs \cite{KoKo}, and also in \cite{Ko}.
If the complex setting is well known, the different attempts in real geometry have been unsuccessful so far \cite{AB,MR}.
The two main difficulties one has to face in the real algebraic case are the complexity of the topology of the real points of algebraic varieties, notably with regards to centrality issues, and the intriguing behavior of rational functions (as illustrated in \cite{KN}), whose understanding is somehow the cornerstone of this work. 
These issues lead to two major features of our construction. 

Let us review the classical case. 
The weak-normalization of a complex analytic variety has been introduced by Andreotti \& Norguet \cite{AN} in order to study the space of analytic cycles associated with a complex algebraic variety.
The operation of weak-normalization consists in enriching the sheaf of holomorphic functions with those continuous functions which are also meromorphic. Later Andreotti \& Bombieri \cite{AB} defined the notion of weak-normalization in the context of schemes. For algebraic varieties, it consists roughly speaking of an intermediate algebraic variety between an algebraic variety $X$ and its normalization, in such a way that the weak-normalization of $X$ is in bijection with $X$. The construction goes by identifying in the normalization all points in the fibers over $X$. It gives rise to a variety satisfying a universal property among those varieties in birational bijection via a universal homeomorphism onto $X$.
The theory of seminormalization, closely related to that of weak-normalization, have been developed later by Traverso \cite{T} for
commutative rings, with subsequent work notably by Swan \cite{Sw} or
Leahy \& Vitulli \cite{LV} (see also \cite{Vcor}), 
with a particular focus on the algebraic approach and the study of the
singularities. 
Note that in the geometric context of complex algebraic
variety, weak-normalization and seminormalization lead to the same
notion. We refer to Vitulli \cite{V} for a survey on weak normality
and seminormality for commutative rings and algebraic varieties.

In the context of real geometry, the first occurrence of weak
normality or seminormality is the work by Acquistapace, Broglia and
Tognoli \cite{ABT} in the case of real analytic spaces. In \cite{MR}
the Traverso-style seminormalization of real algebraic varieties is studied by considering the ring of regular functions,
showing that such notion does not provide a natural universal
property. Seminormalization in the Nash context is introduced in \cite{Ra}.
Our aim in this paper is to provide appropriate definitions for weak
normalization and seminormalization in real algebraic
geometry, leading to natural universal properties. Two major
differences with the complex case appear. First, the normalization is
no longer surjective on the real closed points in general, but only on the central loci, namely the Euclidean closure of the set of non-singular points. Moreover, the
notions of weak-normalization and seminormalization that we consider
in the paper are distinct, the difference being witnessed by the
behavior of continuous rational functions on the central locus of real algebraic varieties.

\vskip 5mm
Dealing with continuous functions on real algebraic varieties, we are
interested mainly in the real closed points of real varieties. Note
that the real closed points of a quasi-projective variety defined over $\R$ are always included in an affine variety, so that we will restrict ourselves to this setting in the paper. In particular, consider now
a real algebraic variety as an algebraic subset of $\R^n$ (in the sense of \cite{BCR}).
The first focus on continuous rational function in real geometry is
due to Kreisel \cite{Kre} who proved that a positive answer to Hilbert
seventeenth problem of representing a positive polynomial as a sum of
squares of rational functions, can always be chosen among continuous
functions. Besides, Kucharz \cite{Ku} used this class of functions to
approximate as algebraically as possible continuous maps between
spheres, whereas Koll\'ar \& Nowak \cite{KN} initiated the proper
study of these functions, proving notably that the restriction of a
continuous function defined on a central real algebraic variety (in
the sense of \cite{BCR}), which
is also rational, does not remain rational in general. It is however
the case as soon as the ambient variety is nonsingular. As a
consequence, on a singular real algebraic variety $X$ one may consider the
ring $\SR(X)$ of continuous rational functions, or its subring
$\SRR(X)$ consisting of those continuous rational functions which
remain rational under restriction. This class, called hereditarily
rational in \cite{KN}, has been systematically studied in \cite{FHMM}
under the name of regulous functions. When the real algebraic variety
is no longer central, a rational function may admits several
continuous extensions to the whole variety. This is the reason why we
will consider continuous rational functions on the central locus of
algebraic varieties.
The continuous rational functions are now extensively studied in real geometry, we refer for example to \cite{KuKu1, KuKu2, FMQ, Mo} for further readings related to the subject of the paper.

\vskip 5mm

Let us sketch the construction of the weak and semi-normalizations.
The rational functions on a real algebraic variety $X$ that satisfy a monic polynomial equation with coefficients in the ring $\Pol(X)$ 
of polynomial functions on $X$, form the integral closure of
$\Pol(X)$ in the ring $\K(X)$ of rational functions
on $X$,
which is a finite module over $\Pol(X)$. This ring is the polynomial ring of the normalization $X'$ of $X$, coming with a finite birational morphism onto $X$. 
The resulting morphism from the complex points of $X'$ to the complex
points of $X$ is well-known to be closed with respect to the Zariski topology, hence surjective. However this is no more the case in restriction to the real points ! The {\it real} behavior of normalization, together with various other integral closures, has been investigated in \cite{FMQ-futur}. In particular, it has been noted there that the lack of surjectivity is related to the notion of central locus of a real algebraic variety.

So in the present paper, we push further the study and restrict ourselves to the central part of the real varieties. When requiring that the rational functions admit a continuous extension to the central locus $\Cent X$ of $X$, 
the integral
closure of $\Pol(X)$ in $\SR(\Cent X)$ is still a finite module over $\Pol(X)$, 
and therefore it coincides with the polynomial ring of a real algebraic variety. The study of the properties of this variety is the main subject of the paper.
We call this variety the weak-normalization $X^{w_c}$ of $X$ relative to the central locus of $X$. It
comes again with a finite birational polynomial morphism $\pi^{w_c}:X^{w_c}\to X$, which is an
homeomorphism for the Euclidean topology in restriction to the central
loci. We provide several characterizations of $X^{w_c}$, notably from
a geometric point of view that $X^{w_c}$ is the biggest intermediate
variety between $X$ and $X'$ whose central locus is in bijection with
$\Cent X$, or in an algebraic point of view introducing the notion of
centrally weakly subintegral extension of rings. It satisfies a universal property as follows.
\vskip 2mm
{\bf Theorem.}
{\it Let $X$ be an affine real algebraic variety.
For each affine real algebraic variety $Y$ together with a
finite birational map
$\pi:Y\rightarrow X$ then $\pi:\Cent Y\to \Cent X$ is bijective if and
only if the map $\pi^{w_c}:X^{w_c}\to X$ factorizes through $\pi$.}
\vskip 2mm
The justification for
calling $X^{w_c}$ the weak-normalization of $X$ comes from Theorem
\ref{propunivcentweak}, which illustrates that 
$X^{w_c}$ satisfies analogue properties in the real setting as 
the weak-normalization for complex algebraic varieties. Notice that
complex algebraic varieties are central i.e equal to their central
locus since the semi-algebraic dimension at any point is maximal (in
its irreducible component). Note also that this restriction to the
central locus is necessary is the real situation, because in general a
biggest real algebraic variety in finite birational bijection with a given real algebraic variety does not exist, as illustrated by Example \ref{ex-final} at the end of the paper.

Now, replacing the ring of continuous rational functions with the ring of hereditarily rational functions leads similarly to the definition 
of the seminormalization $X^{s_c}$ of $X$ relative to the central locus of $X$, whose ring of polynomial functions is given by 
the integral closure of $\Pol(X)$ in $\SRR(\Cent X)$. The study of the properties of this seminormalization, in comparison with the weak normalization, is the second main topic of the present paper.
The seminormalization of $X$ relative to its central locus is
an intermediate variety between $X$ and $X^{w_c}$, so that $X^{s_c}$ admits a
finite birational morphism $\pi^{s_c}$ onto $X$ which is an homeomorphism on the
central loci for the Euclidean topology.
It is moreover the biggest intermediate variety between $X$ and $X'$
whose polynomial functions are hereditarily rational on $\Cent X$. It
is also the biggest intermediate variety between $X$ and $X'$ whose
central locus is in bijection with $\Cent X$ with an inverse map which
is hereditarily rational. At
the level of algebra, we characterize $\Pol(X^{s_c})$ as the maximal 
centrally subintegral extension of $\Pol(X)$. Similarly for the universal property, we have~:
\vskip 2mm
{\bf Theorem.}
{\it Let $X$ be an affine real algebraic variety. 
For each affine real algebraic variety $Y$ together with a
finite birational map
$\pi:Y\rightarrow X$ then $\pi:\Cent Y\to \Cent X$ is bijective
with an inverse map which is hereditarily rational if and only if the map $\pi^{s_c}:X^{s_c}\to X$ factorizes through $\pi$.}

\vskip 2mm

The normalization, weak-normalization and seminormalization relative to the central locus of a real
algebraic variety are different in general. The latter two coincide on
varieties where all continuous rational functions are hereditarily rational, for
instance in the case of curves. We provide in this particular case a
full description of the singularities of centrally weakly normal curves, in
the spirit of \cite{Da} in the complex context.

Note also that, if it is clear from the very construction that
isomorphic real algebraic varieties have isomorphic weak normalization
and seminormalization, it is still true, similarly to the case of the
normalization, that biregular real algebraic varieties have biregular weak normalization and seminormalization (cf Theorem \ref{bireg}).

\vskip 5mm
At this stage of lecture, the reader should imagine that the weak
normalization and the seminormalization relative to the central loci
share very similar properties and somehow behave in a parallel
manner. We would like to focus however on a crucial difference. If
adding a continuous assumption in the construction of the
normalization naturally leads to a variety in (universal) bijection
(on the central loci), the additional assumption of hereditary
rationality implies moreover that the residual real local fields are isomorphic. This major difference is responsible for the fact that, as in the complex case, the seminormalization process
relative to the central locus commutes with localization at a prime
ideal, but this is no longer the case for the weak normalization process !


\vskip 5mm

The paper is organized as follows. First section is devoted to some preliminaries on real algebraic varieties, notably concerning central loci, normalization process or continuous rational functions. Second section deals with topological properties of integral morphisms in the real context, where the lying over and going-up properties are revisited. We introduce there the algebraic treatment of weak normalization and seminormalization in the Traverso style. Third section deals geometrically with finite birational maps, and details how (hereditary) continuous rational functions behaves under such maps. It is in the fourth section that we define the weak and seminormalization relative to the central locus, using (hereditary) continuous rational functions. We make the link with the algebraic notions developed in section three, and tackle the issue of the commutation with the localization. The final section gives a full treatment of the curves case. Note that we made the choice to provide examples illustrating the most intriguing phenomena, which we find of interest both for non real, but also real, geometers !

\vskip 10mm

{\bf Acknowledgment} :  The authors are deeply grateful to F. Acquistapace and F. Broglia for mentioning to them 
the potential study of weak-normalization for real algebraic variety via continuous rational functions, and to A. Parusi\'nski for useful discussions.

\section{Preliminaries on real algebraic varieties}
In this section we review the basic definition of a real algebraic variety together with the properties of its normalization, and recall the concept of continuous rational functions.

\subsection{Real algebraic sets and varieties}
\label{sectiongeoalg}  
For a commutative ring $A$ we denote by $\Sp A$ the Zariski spectrum of $A$,
the set of all prime ideals of $A$. We denote by $\MSp A$ the set of maximal ideals of $A$. 
In this work, we also consider the real Zariski spectrum $\ReSp A$ which consists
in all the real prime ideals of $A$. 

Recall that an ideal $I$ of $A$ is called
real if, for every sequence $a_1,\ldots,a_k$ of elements of $A$, then
$a_1^2+\cdots+a_k^2\in I$ implies $a_i\in I$ for $i=1,\ldots,k$. 

An affine real algebraic variety is a Zariski spectrum of the form
$\Sp (\R[x_1,\ldots,x_n]/I)$ where $I$ is an ideal of $\R[x_1,\ldots,x_n]$.
To a real algebraic variety given by the ideal 
$I$ in $\R[x_1,\ldots,x_n]$ one associates the 
real algebraic set $X=\Z(I)$ of all points in $\R^n$ 
which cancel any polynomial in $I$ i.e $X$ corresponds to
$\ReSp(\R[x_1,\ldots,x_n]/I)\cap \MSp(\R[x_1,\ldots,x_n]/I)$. 
Conversely, to any real algebraic set $X\subset \R^n$ one may associate
the real algebraic variety given by the ideal $\I(X)\subset \R
[x_1,\ldots,x_n]$ of all polynomials
which vanish at all points of $X$. Given an ideal $I$ of
$\R[x_1,\ldots,x_n]$ then the real Nullstellensatz
\cite[Thm. 4.1.4]{BCR} says that $\I(\Z(I))=I$ if and only if $I$ is a real
ideal. In particular, for a real algebraic set then $\I(X)$ is a real
ideal. 

We are interested in this text in the geometry of the real closed points of real algebraic varieties. In this context, it is natural to consider only varieties which are affine since almost all real algebraic
varieties are affine \cite[Rem. 3.2.12]{BCR}.
In the sequel we mostly consider algebraic sets (as in
\cite{BCR}) rather than real algebraic varieties (as in \cite{Man}, with an emphasis on $\R$-schemes) and unless specified, these algebraic sets are real.

\vskip 2mm

In complex affine algebraic geometry, polynomial and regular functions
coincide and thus we have a unique and natural definition of morphism
between complex algebraic sets. In the real setting no such natural
definition exists. Usually, real algebraic geometers prefer working
with the ring $\SO(X)$ of regular functions, i.e. rational functions with no real poles (see \cite[Sect. 3.2]{BCR} for details), rather than the ring of polynomial functions $\Pol(X)=\R
[x_1,\ldots,x_n]/I$ where $I=\I(X)$ on a real algebraic set $X$. In
this paper however, we work rather with the ring of polynomial
functions due to its better properties with respect to the
normalization process (see \cite{FMQ-futur}).

Let $X$ be a real algebraic set. We denote by 
$\m_x$ the maximal ideal of functions in $\Pol(X)$ that vanish at $x\in X$. With the real Nullstellensatz \cite[Thm. 4.1.4]{BCR}, we have a natural correspondence between the points of $X$ and the real maximal ideals of $\Pol(X)$.

Let $X\subset\R^n$ and $Y\subset\R^m$ be real algebraic sets. A
polynomial map from $X$ to $Y$ is a map whose
coordinate functions are polynomial. A polynomial map $\varphi:X\rightarrow Y$ induces an $\R$-algebra
homomorphism $\phi:\Pol(Y)\rightarrow \Pol(X)$ defined by
$\phi(f)=f\circ\varphi$. The map $\varphi\mapsto \phi$
gives a bijection
between the set of polynomial maps from $X$ to $Y$ and
the $\R$-algebra homomorphisms from $\Pol(Y)$ to $\Pol(X)$. We say that
a polynomial map $\varphi:X\rightarrow Y$ is an
isomorphism if $\varphi$ is bijective with
a polynomial inverse, or in another words if $\phi:\Pol(Y)\rightarrow
\Pol(X)$ is an isomorphism. We define the analog notions with regular
functions in place of polynomials ones. In that situation, an
isomorphism will be called a biregular isomorphism.
Unless specified, a map is polynomial.

\vskip 5mm

Let $X\subset\R^n$ be a real algebraic set. The complexification of
$X$, denoted by $X_{\C}$, is the complex algebraic set
$X_{\C}\subset\C^n$, whose ring of polynomial functions is
$\Pol(X_{\C})=\Pol(X)\otimes_{\R}\C$. As already mentioned, we have
$\Pol(X_{\C})=\SO(X_{\C})$. We say that $X$ is
geometrically smooth if 
$X_{\C}$ is smooth. Remark that if $X$ is irreducible, then
$X_{\C}$ is automatically irreducible because $X$ is an algebraic
set. The situation is different when we consider an affine real algebraic variety $X$, 
actually $X$
can be irreducible and $X\times_{\Sp\R}\Sp\C$ reducible when the set
of real points of $X$ is not Zariski dense in the set of complex
points, as illustrated by the example of the affine real algebraic
variety corresponding to the ideal $(x^2+y^2)$ in $\R[x,y]$. 

Let $\varphi:X\rightarrow Y$ be a polynomial map between real
algebraic sets. The tensor product by $\C$ of the morphism of
$\R$-algebras $\phi: \Pol(Y)\rightarrow\Pol(X)$ gives a morphism
of $\C$-algebras $\Pol(Y_{\C})\rightarrow \Pol(X_{\C})$ and by duality
we get a polynomial map $\varphi_{\C}:X_{\C}\rightarrow Y_{\C}$ called
the complexification of $\varphi$. Since $\I(X)$ and $\I(Y)$ are real
ideals then $\varphi$ is an isomorphism if and only if $\varphi_{\C}$
is an isomorphism. The situation is again different if we consider
affine real algebraic varieties. Indeed,  two non-isomorphic affine real algebraic varieties can be isomorphic over the
complex numbers, for example the empty conic $\Sp(\R[x,y]/(x^2+y^2+1))$ with the circle $\Sp(\R[x,y]/(x^2+y^2-1))$.

\subsection{Normalization and central locus}

Let $A\to B$ be an extension of rings. An
element $b\in B$ is integral over $A$ if $b$ is the root of a monic
polynomial with coefficients in $A$. By \cite[Prop. 5.1]{AM}, $b$ is
integral over $A$ if and only if $A[b]$ is a finite $A$-module. This
equivalence allows to prove that $A_B'=\{b\in B|\,b\, {\rm is\,
  integral\, over}\,A\}$ is a ring called the integral closure of $A$ in
$B$. The extension $A\to B$ is said to be integral if $A_B'=B$. In
case $A$ is reduced with a finite number of minimal prime ideals and
$B$ is the total ring of fractions $K$ of $A$ (see below), the ring $A_K'$ is
denoted by $A'$ and is simply called the integral closure of $A$.
The ring $A$ is called integrally closed (in $B$) if
$A=A'$ ($A=A_B'$).
If $A$ is the
ring of polynomial functions on an algebraic set $X$ over a
field $k$ then $A'$ is a
finite $A$-module \cite[Sec. 33, after Lemma 2]{Ma}.
In this situation then $A'$ is a finitely generated $k$-algebra and
so $A'$ is the ring of polynomial functions of an
algebraic set, denoted by $X'$, called the normalization
of $X$. We recall that a map $X\rightarrow Y$ between two
algebraic sets over a field $k$ is said finite if the ring morphism
$\Pol(Y)\rightarrow \Pol(X)$ makes $\Pol(X)$ a finitely generated
$\Pol(Y)$-module. The inclusion $\Pol(X)\subset \Pol(X')$ induces a finite map
which we denote by $\pi':X'\rightarrow X$, called the normalization map, which is a birational equivalence. We say that an
algebraic variety $X$ over a field $k$ 
is normal if its ring of polynomial functions is integrally
closed.

Let us give a property which we will need 
to deal with the normalization of a non irreducible algebraic variety.
Namely, one has to deal with coordinate rings which are not necessarily
domains but only reduced rings.

Recall first that if $A$ is a reduced ring with minimal prime ideals  
$\p_1,\ldots,\p_r$, then $(0)=\p_1\cap\ldots \cap \p_r$ and one has the canonical injections 
$A\rightarrow A_1\times\ldots\times A_r\rightarrow K_1\times \ldots\times K_r=K$
where $A_i=A/\p_i$ and $K_i$ is the fraction field of $A_i$ for any $i$. 
The product of fields $K$ is called the total ring of fractions of $A$ and 
the $A_i$'s are called the irreducible components of $A$. In case $A$
is the ring of polynomial functions $\Pol(X)$ on a real algebraic set
$X$ then we denote by $\K(X)$ the total ring of fractions. If
$X_1,\ldots,X_t$ denote the irreducible components of $X$ then $\K(X)$
is the product of fields $\K(X_1)\times\cdots\times\K(X_t)$. Notice
that $\K(X)$ corresponds also to classes of rational functions on $X$
i.e classes of functions without poles on a \textit{dense} Zariski open subset
of $X$. In the following, we prefer to call $\K(X)$ the ring of rational
functions on $X$.

\begin{prop} \cite[Prop. 1.2]{FMQ-futur} \label{ReducedIntegralClosure}
Let $A$ be a reduced ring with minimal prime ideals $\p_1,\ldots,\p_r$.
Then, $A'=A'_1\times\ldots\times A'_r$ where 
$A'$ is the integral closure of $A$ in its total ring of fractions $K$, and, for any $i$, $A'_i$ is
the integral closure of $A_i=A/\p_i$ in its ring of fractions $K_i$.
\end{prop}

\begin{cor} \label{ReducedIntegralClosureGeom}
Let $X$ be an algebraic set. If
$X_1,\ldots,X_t$ denote the irreducible components of $X$ then
$\Pol(X')=\Pol(X_1')\times\cdots\times\Pol(X_t')$ and
$X'$ is the disjoint union of $X_1',\ldots X_t'$.
\end{cor}

Notice that the previous corollary shows that it is sufficient to
understand the normalization for irreducible algebraic sets. It will
no longer be the case with the weak and semi-normalization studied in
the paper.




\vskip 5mm

For a real algebraic set $X\subset \R^n$, we say that $X$ is
geometrically normal if the associated complex algebraic set
$X_{\C}$ is normal. It is well known that $X$ is normal if and only if $X$ is geometrically normal. Note that, if the normality of $X$ implies that the ring of regular functions on $X$ is integrally closed, the converse is not true in general. Note also that biregular real varieties have biregular normalizations \cite[page 75]{BCR}. For more about the integral closure of the ring of regular functions on a real algebraic set, we refer to \cite{FMQ-futur}.

Recall that map $\varphi:Y\rightarrow X$ is birational if
$\varphi$ induces an isomorphism between $\K(Y)$ and $\K(X)$.
Note that the normalization of an algebraic set $X$ is the biggest
algebraic set finitely birational to $X$. More precisely, for any
finite birational map $\varphi:Y\rightarrow X$, there exists $\psi:X'\to Y$ such that $\pi'=\phi \circ \psi$.

\vskip 5mm

The normalization can be though as a kind of weak desingularization of an algebraic variety, but much closer to the original variety due to the finiteness property. Note however that stringy phenomena may appear in the real case.
For instance, the normalization of a cubic with an isolated point (e.g. given by the equation $y^2=t^2(t-1)$ in $\R^2$) is a smooth curve such that no real point lies over the singular point of the cubic.
This phenomenon leads us to consider the central loci of real algebraic sets.

\begin{defn}
\label{centrale}
Let $X$ be an algebraic set. If $X$ is irreducible, we define the central locus of $X$, denoted by $\Cent X$, to be the Euclidean closure $\overline{X_{reg}}^{eucl}$ in $X$ of the set $X_{reg}$ of non-singular points of $X$. Otherwise, if $X$ is an algebraic set with irreducible components
$X_1,\ldots,X_t$, we define the central locus of $X$ to be $\Cent X=\bigcup_{i=1}^t\Cent X_i$.

We say that $X$ is central if $X$ is equal to $\Cent X$.
\end{defn}

\begin{rem}
\begin{enumerate}
  \item By \cite[Prop. 7.6.2]{BCR}, the central locus of an
    irreducible algebraic set is
   the locus of points where the local semi-algebraic dimension is
   maximal.
  \item An algebraic set can be central without its irreducible
    components being so.
\end{enumerate}
\end{rem}

The normalization of any real algebraic curves is central. This is true since the normalization of a curve is even non-singular. However, it may happen that modifying a central curve via a finite birational map creates a non-central curve. Even worst, the normalization of a central surface may create isolated points! These pathologies, illustrated by the following examples, are the main reason why in the paper, we define a concept for weak and semi normalization of real algebraic varieties 
{\it relative} to the central locus.

\begin{ex}\label{grospoint2}
\begin{enumerate}
\item Let $C=\Z(y^4-x(x^2+y^2))$ in $\R^2$. The only singular
point of $C_{\C}$ is the origin, which is the intersection of a
real branch with two complex conjugated branches. In particular $C$ is central.

Consider the rational function $f=y^2/x$ on $C$, which satisfies the
integral equation $f^2-f-x=0$. Adding $f$ to the polynomial ring of
$C$ gives rise to an algebraic curve $Y$ with ring of polynomial functions
$$\Pol(Y)=\Pol(C)[y^2/x]\simeq\dfrac{\R[x,y,t]}{(y^4-x(x^2+y^2),t^2-t-x,xt-y^2,y^2t-(x^2+y^2))}$$
and since $y^2/x$ is integral over $\Pol(C)$ we get a finite birational
map $\pi:Y\to C$. 
Note that $Y$ may be embedded in $\R^2$ via the projection forgetting
the $x$ variable, giving rise to the cubic with an isolated point of
equation $y^2=t^2(t-1)$ in $\R^2$. The preimage by $\pi$ of the
singular point of $C$ consists of two real points and one of them is
the isolated point of $Y$.
Note that the polynomial function on $Y$ corresponding to the rational function $f$ is equal to $t$ and it takes different values at these two points.

\item We can elaborate on the previous example to construct a central
surface whose normalization is not central. Consider the surface $S=\Z((y^2+z^2)^2-x(x^2+y^2+z^2))$ in $\R^3$. Then $S$ is central with a unique singular point at the origin. Its complexification admits two complex conjugated curves crossing at the origin as singular set. The rational function $f=(y^2+z^2)/x$ satisfies the integral equation $f^2-f-x=0$. Let $Y$ be the surface in $\R^4$ admitting as ring of polynomial function $\Pol(Y)=\Pol(S)[(y^2+z^2)/x]$. We have
$$\Pol(Y)\simeq \dfrac{\R[x,y,z,t]}{((y^2+z^2)^2-x(x^2+y^2+z^2),t^2-t-x,xt-(y^2+z^2),(y^2+z^2)t-(x^2+y^2+z^2))}$$
and since
$(y^2+z^2)/x$ is integral over $\Pol(S)$ we get a finite birational map $\pi:Y\to S$. 
Note that $Y$ may be embedded in $\R^3$ via the projection forgetting the $x$ variable, giving rise to the surface defined by the equation $y^2+z^2=t^2(t-1)$ in $\R^3$. This surface is no longer central, with an isolated singular point at the origin. The preimage of the origin in $S$ consists of two points, the isolated point in $Y$ plus a smooth point in the two dimensional sheet of $Y$. Note that $Y$ is normal since its complexification is an (hyper)surface with a singular point. So $Y$ is the normalization of $S$.
Note that the polynomial function on $Y$ corresponding to the rational function $f$ is equal to $t$ and it has different values at these two points.
\end{enumerate}
\end{ex}

\subsection{Rational and continuous functions}

The intriguing behaviour of rational functions on a real algebraic set admitting a continuous extension to the whole algebraic set has been investigated in \cite{KN}. Among them, the special class of hereditarily rational functions is of special interest.

Let $X\subset \R^n$ be an
algebraic set.
A rational function $f\in \K(X)$ is regular on a Zariski-dense open
subset $U\subset X$ if there exist polynomial functions $p$ and $q$ on
$\R^n$ such that $\Z(q) \cap U=\emptyset$ and $f=p/q$ on $U$. The
couple $(U,f_{|U})$ is called a regular presentation of $f$.
The biggest such $U$ is called the domain $\dom (f)$ of $f$, and its complementary in $X$ is the indeterminacy locus $\pol(f)$ of $f$.

\begin{defn}
\label{defratcont}
Let $f:X\to \R$ be a continuous function. We say that
$f$ is a continuous rational function on $X$ if there exists a
Zariski-dense open subset $U\subset X$ such that $f_{|U}$ is
regular. We denote by $\SR(X)$ the ring of continuous rational functions on $X$.

Let $f:\Cent X\to \R$ be a continuous function. We say that
$f$ is a continuous rational function on $\Cent X$ if there exist a Zariski-dense open subset $U\subset X$ and a regular function $g$ on $U$ such that the restriction $f_{|U\cap \Cent X}$ is equal to the restriction $g_{|U\cap \Cent X}$.
The ring of continuous rational functions on $\Cent X$ is denoted by $\SR(\Cent X)$. 

A map $Y\to X$ between real algebraic sets $X\subset \R^n$ and $Y\subset \R^m$ is called rational continuous if its
components are rational continuous functions on $Y$.

A map $\Cent Y\to \Cent X$ between the central loci of real algebraic
sets $X\subset \R^n$ and $Y\subset \R^m$ is called rational continuous
if its components are rational continuous functions on $\Cent Y$.

\end{defn}

A typical continuous rational function is provided by the function defined by $(x,y)\mapsto
x^3/(x^2+y^2)$ on $\R^2$ minus the origin, and by zero at the
origin. The rational functions considered in Examples \ref{grospoint2}
admit also a continuous extension (by the value $1$) at their unique indeterminacy point.

Consider also the Cartan umbrella $X=\Z(z(x^2+y^2)-x^3)$ and the function
$f=x^3/(x^2+y^2)$ extended by $0$ along the $z$-axis. Then $f$ is rational
continuous on $X$ and so on $\Cent X$, but $f$ is also polynomial on $\Cent X$ since its coincides there with $z$. Of course $f$ and $z$ are different on $X$.
Remark that a polynomial function on a real algebraic set $X$ is the
restriction to $X$ of a unique polynomial function on $X_{\C}$ and
uniqueness comes from the Zariski density of $X$ in $X_{\C}$.

Note also that on a curve with isolated points like the cubic curve $\Z(y^2-x^2(x-1))$, a function regular on the one-dimensional branches can be extended continuously by any real value at the isolated points. In particular, the natural ring morphism
$\SR(X)\rightarrow \K(X)$ which sends $f\in \SR(X)$ to the
class $(U,f_{|U})$ in $\K(X)$, where $(U,f_{|U})$ is a regular
presentation of $f$, is not injective in general. 
However, restricting our attention to the central locus, note that the canonical map
$\SR(\Cent X)\rightarrow \K(X)$ is now injective.

Considering now a reducible algebraic set $X$ with irreducible components $X_1,\cdots,X_t$. By restriction, one gets a well defined natural morphism $\Xi_0:\SR(\Cent X)\to\SR(\Cent X_1)\times
\cdots\times\SR(\Cent X_t)$. Indeed, 
a function
$f\in\SR(\Cent X)$
is in particular rational on each irreducible component $X_i$ of $X$
as a rational function, and its restriction $f_{|\Cent X_i}$ is
continuous. Moreover, this morphism is clearly injective and one also has:

\begin{prop} \label{ratcontreducible}
Let $X$ be an algebraic set with irreducible components
$X_1,\cdots,X_t$. Then $\SR(\Cent X)$ is isomorphic to the subring
of $\SR(\Cent X_1)\times \cdots\times\SR(\Cent X_t)$ consisting of
$t$-tuples $(f_1,\ldots,f_t)$ such that $f_i$ and $f_j$ coincide on
the intersection $\Cent X_i\cap\Cent X_j$ for all
$i,j\in\{1,\ldots,t\}$.
\end{prop}

\begin{proof}
Denote by $A$ the subring of $\SR(\Cent X_1)\times \cdots\times\SR(\Cent X_t)$ consisting of
$t$-tuples $(f_1,\ldots,f_t)$ such that $f_i$ and $f_j$ coincide on
the intersection $\Cent X_i\cap\Cent X_j$ for all
$i,j\in\{1,\ldots,t\}$. 

The image of the natural morphism 
$\SR(\Cent X)\to\SR(\Cent X_1)\times
\cdots\times\SR(\Cent X_t)$ is included in $A$ since $\Cent X=\bigcup_{i=1}^t\Cent X_i$. 

Let us show the converse inclusion. An element in $A$ gives rise
to $t$ rational functions defined on $X_1,\ldots,X_t$ respectively, so
it coincides with a unique rational function on $X$. Moreover, this
rational function admits a continuous extension to $\Cent X$ by
definition of $A$ and since $\Cent X=\bigcup_{i=1}^t\Cent X_i$.
\end{proof}

Another stringy phenomenon appearing only on singular sets is illustrated by Koll\'ar example \cite{KN}. Consider the surface $S=\Z(y^3-(1+z^2)x^3)$ in $\R^3$. The continuous function defined by $(x,y,z)\mapsto ~^3 \sqrt{1+z^2}$ is regular on $S$ minus the $z$-axis, however its restriction to the $z$-axis is no longer rational. 
The continuous rational functions do not having this pathological behavior have been called hereditarily rational functions. Originally defined on the whole (central) algebraic set, one may even define hereditarily rational functions over semi-algebraic subsets as in \cite[Def. 1.4]{KuKu2}.
For our purposes, we will mainly consider hereditarily rational functions over central loci, in the sense of the following definition. Note that one has to be precocious in such a definition because the central locus is just a semialgebraic set in general. 

For a subset $A$ of an algebraic set, we denote by $\overline{A}^Z$ the closure of $A$ with respect to the Zariski topology.
We introduce a notion of centrality relative to a given algebraic set.

\begin{defn} Let $X$ be a real algebraic set and $V$ be an irreducible algebraic  subset of $X$. We say that $V$ is central in $X$ if the Zariski closure of $V\cap \Cent X$ is equal to $V$.
\end{defn}

The relative centrality of $V$ in $X$ assures than
$V$ intersects  sufficiently the central locus of $X$. This is the
case for example for the stick of the Whitney umbrella (i.e. the
$z$-axis in the surface of $\R^3$ defined by $x^2=y^2z$)
while it is not the case for the stick of the Cartan umbrella (i.e. the
$z$-axis in the surface of $\R^3$ defined by $x^3=(x^2+y^2)z$).
Beware that a plane cubic with an isolated point is central in the plane, even if it is not central by itself. 

\begin{defn}
  Let $X$ be an algebraic set. A rational function $f$ on $X$ is hereditarily rational on $\Cent X$ if 
  $f\in\SR(\Cent X)$ and, for every
irreducible algebraic subset $V$ central in $X$, the restriction $f_{|V\cap \Cent X}$ is {\it rational on $V$} in the following sense : there exist a Zariski-dense open subset $W$ of $V$ and a rational function $g$ on $V$ which is regular on $W$, such that the restrictions of $f_{|V\cap \Cent X}$ and $g$ to $\Cent X \cap W$ coincide.

We denote by $\SRR(\Cent X)$ the ring of hereditarily rational functions on $\Cent X$.

A map $\Cent Y\to \Cent X$ between the central loci of real algebraic
sets $X\subset \R^n$ and $Y\subset \R^m$ is called hereditarily rational if its
components are hereditarily rational functions on $\Cent Y$.
\end{defn}

The relative centrality well behaves with respect to irreducible components.

\begin{lem}\label{lem-red} Let $X$ be an algebraic set, and $V$ be central in $X$. There exists an irreducible component $Y$ of $X$ such that $V\subset Y$ and $V$ in central in $Y$.
\end{lem}

\begin{proof}
The relative centrality of $V$ in $X$ implies that the semialgebraic dimension of $V$ and of $V\cap \Cent X$ coincide. In particular there exists an irreducible component $Y$ of $X$ such that $\dim V=\dim (V\cap \Cent Y)$. Moreover $V\subset Y$ by irreducibility of $V$, and finally $V\cap \Cent Y$ is Zariski dense in $V$. 
\end{proof}

In the case of curves, the rings $\SR(\Cent X)$ and 
$\SRR(\Cent X)$ coincide. For non singular algebraic sets, it is known that any continuous rational function is also hereditarily
rational \cite{KN}, and this is even the case in the presence of isolated
singularities \cite{Mo}.

Remark that the canonical
map $\SRR(\Cent X)\rightarrow \K(X)$ is again injective.
Moreover, if $X$ has irreducible components $X_1,\cdots,X_t$, one gets, a natural injective morphism $$\SRR(\Cent X)\to\SR(\Cent X_1)\times
\cdots\times\SR(\Cent X_t).$$ One may describe its image.

\begin{prop} \label{regulureducible}
Let $X$ be an algebraic set with irreducible components
$X_1,\cdots,X_t$. Then, $\SRR(\Cent X)$ is canonically isomorphic to the subring
of $\SRR(\Cent X_1)\times \cdots\times\SRR(\Cent X_t)$ consisting of
$t$-tuples $(f_1,\ldots,f_t)$ such that $f_i$ and $f_j$ coincide on
the intersection $\Cent X_i\cap\Cent X_j$ for all
$i,j\in\{1,\ldots,t\}$.
\end{prop}

\begin{proof}
Denote by $A$ the subring of $\SRR(\Cent X_1)\times \cdots\times\SRR(\Cent X_t)$ consisting of
$t$-tuples $(f_1,\ldots,f_t)$ such that $f_i$ and $f_j$ coincide on
the intersection $\Cent X_i\cap\Cent X_j$ for all
$i,j\in\{1,\ldots,t\}$. By Proposition
\ref{ratcontreducible}, we have an injective morphism 
$$\SR(\Cent X)\to\SR(\Cent X_1)\times
\cdots\times\SR(\Cent X_t), ~~~f\mapsto (f_{|\Cent
  X_1},\ldots,f_{|\Cent X_t}).$$ 
  Let $f\in\SRR(\Cent X)$. Let $V_i$ be an
algebraic subset central in $X_i$ 
and denote the restriction $f_{|\Cent X_i}$ by $f_i$. Note that $V_i$ is central in $X$ since $\Cent X_i\subset\Cent X$ and thus
$f_{|V_i\cap\Cent X}$ is rational on $V_i$ because $f$ is hereditarily rational. It implies that $f_{|V_i\cap\Cent X_i}$ is also rational on $V_i$.
 Since $(f_i)_{|V_i\cap \Cent
  X_i}=f_{|V_i\cap \Cent X_i}$ then it follows that $(f_i)_{|V_i\cap \Cent
  X_i}$ is also rational on $V_i$. So $f_i\in\SRR(\Cent X_i)$ and we get indeed an
injective morphism $\SRR(\Cent X)\to A$.

To prove the surjectivity, we have to
check that, given $(f_1,\ldots,f_t)$ in $A$, the induced continuous
rational function $f$ on $\Cent X$ is still hereditarily rational. Let $V$ be an
irreducible algebraic subset central in $X$. There exists an $i$ such that $V\cap \Cent X_i$ is Zariski dense in $V$ by Lemma \ref{lem-red}, therefore $f_{|V\cap \Cent X_i}=(f_i)_{|V\cap \Cent
  X_i}$ is rational on $V$. As a consequence $f_{|V\cap \Cent X}$ is also rational
on $V$, so that $f$ is hereditarily rational on $\Cent X$.
\end{proof}


\section{Some topological properties of integral morphisms}
In real algebraic geometry, it is common to use various topologies,
like the Zariski topology or the Euclidean topology. When dealing with algebra, the same situation appears, and in this section we study topological properties of integral morphisms with respect to Zariski topology, the topology of the real spectrum, and the real Zariski topology.

The aim of this section is to deal with the general algebraic properties of integral ring homomorphisms 
between two reduced rings with same total ring of fractions. The results will be applied 
in the geometric settings in the following sections.

From now on, all our rings will contain ${\mathbb Q}$.

\subsection{Several topologies on a ring}
Our main interest is the study of the real Zariski topology which can be seen as a 
the real part of the classical Zariski topology. We also introduce the real spectrum topology since it has been 
intensively studied in the literature and hence it provides some tools to study the real Zariski topology. 

\subsubsection*{Zariski topology}
Let $A$ be a commutative ring and denote by $\Sp A$
the Zariski spectrum of $A$, i.e the set of all prime ideals of $A$. The set $\Sp A$ can be endowed with the Zariski topology 
whose basis of open subsets is given by the sets $D(a)=\{\p\in\Sp A\mid a\notin \p\}$ for $a\in A$. The 
closed subsets are given by 
the sets $V(I)=\{\p\in\Sp A\mid I\subset \p\}$ where $I$ is an ideal of $A$. 

Let us denote $\Max A\subset \Sp A$ the subset of all maximal ideals of $A$. 

\subsubsection*{Real spectrum topology}
To a commutative ring $A$ one may also associate a topological subspace $\Sp_r A$ 
which takes into account only prime ideals $\p$ whose residual field admits an ordering. Let us detail 
this construction a bit. 

An order $\alpha$ in $A$ is given by a real prime ideal $\p$ of $A$ (called
the support of $\alpha$ and denoted by $\supp(\alpha)$) and an ordering on the residue 
field $k(\p)$ at $\p$. An order can equivalently be given by a morphism $\phi$
from $A$ to a real closed field.

One has a natural support mapping $\Sp_r A\rightarrow \Sp A$ which sends $\alpha$ to $\supp(\alpha)$.

The value $a(\alpha)$ of $a\in A$ at the ordering $\alpha$ is just $\phi(a)$. 
The set of orders of $A$ is called the real spectrum of $A$ and denoted by $\Sp_r A$. 
It is empty if and only if $-1$ is a sum of squares in $A$.  
One endows $\Sp_r A$ with a natural topology whose open subsets are
generated by the sets $\{\alpha\in\Sp_rA\mid a(\alpha)>0\}$ where
$a\in A$. 
Let $\alpha,\alpha'$ be
two points of $\Sp_r A$, then we say that $\alpha$ is a specialization
of $\alpha'$ if $\alpha$ is in the closure of the singleton
$\{\alpha'\}$. We denotes this property by $\alpha'\rightarrow\alpha$.

For more details
on the real spectrum, the reader is referred to \cite{BCR}.\par

\subsubsection*{Real Zariski topology}
We also consider the set $\ReSp A$ which is just the image of the support mapping, namely  it consists
of all the real prime ideals of $A$. We endow it with the induced Zariski topology.

We set $D_R(a)=D(a)\cap (\ReSp A)$ and $V_R(I)=V(I)\cap (\ReSp A)$.

 Then, the closed subsets of $\ReSp A$ have the form $V_R(I)$ where $I$ is an ideal of $A$ 
 and a basis of open subsets is given by the subsets $D_R(a)$ for $a\in A$.
 
\subsubsection*{Functoriality}
Let $\phi:A\rightarrow B$ be a ring morphism. It canonically induces a map
$\psi:\Sp B\rightarrow \Sp A$ which is continuous for the Zariski topology.

It also induces a map
$\psi_r:\Sp_r B\rightarrow \Sp_r A$ which is continuous for the real spectrum topology.

And also, 
\begin{prop}
The morphism $\phi:A\rightarrow B$ induces a map
$\psi_R:\ReSp B\rightarrow \ReSp A$ which is continuous for the real Zariski topology.
\end{prop}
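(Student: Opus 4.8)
The plan is to take $\psi_R$ to be simply the restriction of the Zariski map $\psi\colon\Sp B\to\Sp A$, $\q\mapsto\phi^{-1}(\q)$, to the subset $\ReSp B\subset\Sp B$, and to verify the two things this requires: first, that this restriction actually lands in $\ReSp A$ (so that $\psi_R$ is well defined as a map $\ReSp B\to\ReSp A$), and second, that it is continuous for the induced Zariski topology.

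For well-definedness I would show directly that the contraction of a real prime ideal along $\phi$ is again real. Let $\q\in\ReSp B$; then $\phi^{-1}(\q)$ is certainly prime, so only reality has to be checked. If $a_1,\dots,a_k\in A$ satisfy $a_1^2+\cdots+a_k^2\in\phi^{-1}(\q)$, then applying $\phi$ gives $\phi(a_1)^2+\cdots+\phi(a_k)^2\in\q$, and since $\q$ is real each $\phi(a_i)$ lies in $\q$, that is $a_i\in\phi^{-1}(\q)$; hence $\phi^{-1}(\q)$ is real. Alternatively, one can argue through the real spectrum, which has just been introduced precisely as such a tool: a prime is real exactly when it lies in the image of the support mapping, so choosing $\beta\in\Sp_r B$ with $\supp(\beta)=\q$ and using the compatibility $\supp(\psi_r(\beta))=\phi^{-1}(\supp(\beta))=\phi^{-1}(\q)$ exhibits $\phi^{-1}(\q)$ as a support, hence as a real prime.

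Continuity then comes for free from the continuity of $\psi$, since both spaces carry the induced Zariski topology. Concretely, for a basic open set $D_R(a)=D(a)\cap\ReSp A$ with $a\in A$, I would compute $\psi_R^{-1}(D_R(a))=\psi^{-1}(D(a))\cap\ReSp B$, and since $\psi^{-1}(D(a))=D(\phi(a))$ this equals $D(\phi(a))\cap\ReSp B=D_R(\phi(a))$, which is open in $\ReSp B$. As the sets $D_R(a)$ form a basis of the real Zariski topology, $\psi_R$ is continuous.

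There is no serious obstacle here: the statement is a routine compatibility check, and the only point that genuinely requires an argument, however short, is that real primes pull back to real primes. The sum-of-squares computation settles it immediately, and the real-spectrum reformulation is a reassuring cross-check that also makes $\psi_R$ compatible with $\psi_r$ through the support mappings.
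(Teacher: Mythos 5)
Your proof is correct. The continuity computation is exactly the paper's: $\psi_R^{-1}(D_R(a))=D(\phi(a))\cap\ReSp B=D_R(\phi(a))$ on a basis of opens. The only place where you diverge is the well-definedness step. The paper shows that $\p=\phi^{-1}(\q)$ is real by producing an ordering on $k(\p)$: it takes a morphism $B/\q\rightarrow R$ into a real closed field witnessing the reality of $\q$ and composes with $A/\p\rightarrow B/\q$. Your primary argument is instead the direct sum-of-squares computation from the definition of a real ideal ($a_1^2+\cdots+a_k^2\in\phi^{-1}(\q)$ forces $\phi(a_i)\in\q$, hence $a_i\in\phi^{-1}(\q)$), which is more elementary and does not invoke the equivalence between ``real prime'' and ``support of a point of $\Sp_r$''; your alternative argument via $\supp(\psi_r(\beta))=\phi^{-1}(\supp(\beta))$ is essentially the paper's route restated. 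Both are complete; the paper's version has the mild advantage of simultaneously exhibiting the compatibility of $\psi_R$ with $\psi_r$ through the support maps, which is implicitly used later, while yours isolates the purely ideal-theoretic content.
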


\begin{proof}
Let us see first that this is a well-defined map. Indeed, let $\q\in\ReSp B$ and $\p=\psi(\q)$. Then, there exists an ordering 
on $k(\q)$ that one may define by giving a morphism
$B/\q\rightarrow R$ into a real closed field $R$. Hence, one gets the following commutative diagram :

$$\begin{array}{ccccc}
A&\rightarrow&B\\
\downarrow&&\downarrow\\
A/\p&\rightarrow&B/\q&\rightarrow&R\\
\end{array}$$
which defines an ordering on $k(\p)$ and hence $\p$ is a real prime ideal.

The continuity comes from the following sequence of equalities :
$$\psi_R^{-1}(D_R(a))=\psi_R^{-1}(D(a)\cap \ReSp A)=\psi^{-1}(D(a)\cap
\ReSp A)\cap \ReSp B=$$ $$\psi^{-1}(D(a))\cap \psi^{-1}(\ReSp A)\cap \ReSp B=
D(\phi(a))\cap \ReSp B=D_R(\phi(a)).$$
 \end{proof}

 From now on, we will deal with ring extensions, namely $\phi$ will be injective.
 
\subsection{Lying over and going-up}
\begin{defn}
\label{rlo}
We say that a ring extension $\phi:A\rightarrow B$ satisfies the lying over property if $\psi$ is surjective.
Likewise, we say that $\phi$ satisfies the real lying over property
if $\psi_R$ is surjective. 
\end{defn}

Recall, for instance from \cite[Thm. 9.3]{Ma}, that an integral ring extension $\phi:A\rightarrow B$ satisfies the lying over property, and $\psi$ induces a map from $\Max B$ to $\Max A$ which is surjective.

One has also, induced by $\psi_R$, a map from $\ReMax B$ to $\ReMax A$ but the real counterpart 
of the last property is false in general, namely $\psi_R$ is not necessarily surjective.
For instance, the normalization map is surjective for complex algebraic sets
but this is no longer the case for real algebraic sets, as illustrated by the example of the cubic with an isolated singularity
$y^2-x^2(x-1)=0$. Indeed, its 
normalization has only complex points over the isolated point. 
The same example says also that the lying over property does not imply the real lying-over property.

\begin{defn}
We say that a ring extension $\phi:A\rightarrow B$ satisfies the going-up property if, for any 
couple of prime ideals $\p\subset \p'$ in $\Sp A$ and a prime ideal $\q\in \Sp B$ lying over $\p$, there exists
a prime ideal $\q'\in \Sp B$ over $\p'$ and such that $\q\subset \q'$. 
\end{defn}

The going-up property is stronger than the lying over property : it is obvious in the case where $A$ and $B$ are domains
and it follows from a theorem by Kaplansky in full generality. An integral ring extension $\phi:A\rightarrow B$ satisfies the going-up property (cf. \cite[Thm. 9.4]{Ma} for instance).

Note moreover that if a ring extension $\phi:A\rightarrow B$ is integral, then $\psi$ is a closed mapping with respect to the Zariski topology. The real counterpart of this fact is false, and this is one motivation to consider a real going-up property for the real spectrum.
\begin{defn}
We say that a ring extension $\phi:A\rightarrow B$ satisfies the going-up property for the real spectrum if, for any 
couple of points $\alpha,\alpha'\in\Sp_r A$ such that $\alpha'\rightarrow\alpha$ and a point 
$\beta'\in \Sp_r B$ lying over $\alpha'$, there exists
a point $\beta\in \Sp_r B$ over $\alpha$ and such that $\beta'\rightarrow\beta$. 
\end{defn}

We recall from \cite[Ch. 2, Prop. 4.2 and 4.3]{ABR}:

\begin{prop}\label{GoingupRealSpectrum}
Assume that the ring extension $\phi:A\rightarrow B$ is integral. Then, $\phi$ satisfies the real going-up property
for the real spectrum and $\psi_r$ is a closed mapping.
\end{prop}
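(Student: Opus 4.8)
The plan is to represent points of the real spectrum by homomorphisms into real closed fields and to reduce the real going-up property to a statement about convex valuation rings, where the integrality hypothesis enters through the single fact that a valuation ring is integrally closed in its fraction field. Fix $\beta'\in\Sp_r B$ with support $\q'=\supp(\beta')$ and set $\alpha'=\psi_r(\beta')$, so that $\supp(\alpha')=\p':=\phi^{-1}(\q')$. The ordering $\beta'$ makes $k(\q')$ an ordered field; write $R'$ for its real closure and $h':B\to R'$ for the composite $B\to B/\q'\hookrightarrow k(\q')\hookrightarrow R'$. Since $\phi^{-1}(\q')=\p'$, the field $k(\p')$ sits inside $k(\q')$ as an ordered subfield (the order of $\beta'$ restricts to that of $\alpha'$), so its real closure $R$ embeds in $R'$ and the composite $h:A\to R$ represents $\alpha'$. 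I would then invoke the standard description of specialization in the real spectrum (see \cite{BCR,ABR}): the specializations of a point represented by $A\to R$ are exactly the points induced by the convex valuation rings $V$ of $R$ containing the image of $A$, via $A\to V\twoheadrightarrow V/\m_V$ with the order inherited from the real closed field $V/\m_V$.

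Given the specialization $\alpha'\rightarrow\alpha$, pick such a convex valuation ring $V\subseteq R$ with $h(A)\subseteq V$ inducing $\alpha$. Let $W\subseteq R'$ be the convex hull of $V$, that is $W=\{x\in R'\mid |x|\le v \text{ for some } v\in V\}$; this is a convex subring of $R'$, hence a convex valuation ring of $R'$, and one checks $W\cap R=V$ since $V$ is already convex in $R$. Now $h'(B)$ is integral over $h(A)\subseteq V\subseteq W$, and since $W$ is a valuation ring it is integrally closed in $R'$; therefore $h'(B)\subseteq W$. Consequently $W$ defines a point $\beta\in\Sp_r B$ via $B\to W\twoheadrightarrow W/\m_W$, which is a specialization of $\beta'$ by the characterization applied on the $B$-side. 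Finally $\psi_r(\beta)$ is the point of $\Sp_r A$ induced by the convex valuation ring $W\cap R=V$ of $R$, namely $\alpha$. Hence $\beta$ lies over $\alpha$ and $\beta'\rightarrow\beta$, which is the real going-up property.

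For the closedness of $\psi_r$ I would argue abstractly: $\Sp_r A$ and $\Sp_r B$ are spectral spaces and $\psi_r$ is a spectral map, being continuous with preimages of quasi-compact opens quasi-compact. A spectral map is closed precisely when it is specializing, i.e. when every specialization of a point in the image lifts to a specialization upstairs; but this is exactly the real going-up property established above. Hence $\psi_r$ is closed.

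The step I expect to be the main obstacle is the going-up part, and specifically the clean use of integrality: everything rests on the dictionary between specializations in the real spectrum and convex valuation rings of the associated real closed field, together with the observation that such a ring, being integrally closed, must swallow the integral image $h'(B)$; the matching of supports and orders is then automatic from $W\cap R=V$. The closedness is essentially formal once going-up is known. If one prefers to avoid the valuation dictionary, one can instead try to build $\beta$ directly as a prime cone of $B$ generated by the positive cone $P_{\beta'}$ together with the pulled-back cone $P_\alpha$, but then controlling the restriction $P_\beta\cap A=P_\alpha$ again forces one to use exactly the integrality of $B$ over $A$, so the essential difficulty is unchanged.
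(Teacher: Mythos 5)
Your proof is correct. Note that the paper does not actually prove this proposition: it simply recalls it from \cite[Ch.\ 2, Prop.\ 4.2 and 4.3]{ABR}, and your argument --- representing $\beta'$ by a morphism into a real closed field $R'$, using the standard bijection between specializations and convex valuation rings, taking the convex hull $W$ of $V$ in $R'$, and letting integrality do its work through the fact that the valuation ring $W$ is integrally closed --- is precisely the standard proof of the cited result, with the reduction of closedness to the specialization-lifting property being the usual spectral-space formality. The only steps you leave implicit ($\m_W\cap V=\m_V$ and the compatibility of the induced orders, which give $\psi_r(\beta)=\alpha$) are routine consequences of $W\cap R=V$ and convexity.
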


Likewise, one may define a going-up property for real prime
ideals. Looking at the normalization of a non-central irreducible real algebraic
curve,  we see that integral extensions do not necessarily 
satisfy the real going-up property since they do not necessarily
satisfy the real lying over property. 
This leads us to restrict ourselves to central loci in the sequel.

\subsection{Central real lying over}

Let $X$ be an algebraic set. Then $\Pol(X)$ and $\SO(X)$ are both
reduced rings with a finite number of minimal prime ideals
that are all real ideals. To generalize to an abstract setting, we say
that a ring $A$ satisfies the condition \hyp\, if $A$ is reduced with a finite number of minimal primes
that are all real ideals.

Before giving the definition of the central locus of a ring, let us recall that, for a domain $A$ with fraction field $K$, one can see $\Sp_r K$ as the subset of orders of $\Sp_r A$ whose support is $(0)$. Likewise, when $A$ is satisfying \hyp\, with total fraction field $K$, 
one can see $\Sp_r K$ as the subset of orders of $\Sp_r A$ whose support is a minimal prime ideal of $A$.

\begin{defn}
  Let $A$ be a ring satisfying \hyp\, with total ring of fractions
  $K$.  We define the central locus of $A$,
denoted by $\Cent A$, to be the set of all points in $\Sp_r A$ which
belong to the closure of $\Sp_r K$. 
We say that $A$ is a central ring if $\Cent A=\Sp_r A$.
We denote by $\RCent A$ the subset of $\RSp A$ given by all supports
of points in $\Cent A$. 
\end{defn}

An illustrative example is given by the polynomial ring of the Whitney
umbrella $X=\Z(x^2-y^2z)$, for which the ideal $(x,y)$ defining the
stick of the umbrella belongs to $\RCent \Pol(X)$, even if the stick
is only half-contained in the central locus of $X$. Actually, consider
the semialgebraic half-curve $\gamma$ given by $t\mapsto (0,0,t)$ for
positive values of $t$, which is contained in the half-stick inside
the central locus of $X$. This half-curve $\gamma$ defines an element
$\alpha$ of the real spectrum of $\Pol(X)$ (whose support has zero set the line supporting $\gamma$ and ordering is the set of all one variable polynomials which are non negative for small $t\geq 0$). Since $\gamma$ is included in the central locus, we can draw a two-dimensional semialgebraic sheet $T$ on the umbrella containing $\gamma$ in its closure. 
This sheet may be associated to a point $\beta$
of the real spectrum of $K$ whose support is the nil ideal
and whose ordering is the set of all polynomials which are non negative for small $(x,y,t)\in T$.
As a consequence the point $\beta$ specializes to $\alpha$ and hence $\alpha$ lies in $\Cent \Pol(X)$.

\vskip 2mm


Notice for the sequel that if $\p$ is a minimal prime ideal of a ring
$A$ satisfying \hyp\ , then $\p$ belongs to $\RCent A$.

In the geometric setting, one has already defined the notion of
central locus of an
algebraic set (Definition \ref{centrale}).
Of course, when $X$ is an algebraic set, then $X$ is a central set if and only if $\Pol(X)$ is a central ring
(as it can be deduced from \cite[Prop. 7.6.2, 7.6.4]{BCR}).
For instance, if $X$ is a cubic with an isolated point, then $X$ is not a central set. Since there does not exist any arc included in the one-dimensional component with origin the isolated point, the ring $\Pol(X)$ is not central neither.
In fact, one gets a little bit more : for any algebraic set $X$, the maximal ideals of $\Pol(X)$ that lie
in the central locus can be associated to points in $\Cent X$. 

The real going-up for the real spectrum recalled in Proposition \ref{GoingupRealSpectrum} implies a real lying over with respect to the central loci. We first state a version for domains:

\begin{prop}\label{finitebirationalcentraldomains}
	Let $\phi:A\rightarrow B$ be an integral injective morphism of domains. Then,
	\begin{enumerate}
		\item The morphism
		$\psi_R$ induces a mapping from $\RCent B$ to $\RCent A$.
		\item If moreover $A$ and $B$ have same fraction field, then 
		$\psi_R$ induces a surjective mapping from $\RCent B$ onto $\RCent A$.
	\end{enumerate}
\end{prop}

\begin{proof}
	Property (1) comes from the continuity of $\psi_r$. Namely, let us start with $\q\in \RCent B$. There is
	an ordering $\beta\in\Sp_r B$ whose support is $\q$
	and moreover there is a specialization $\beta'\rightarrow \beta$ in $\Sp_r B$ such that 
	the support of $\beta'$ is the nil ideal.
	Taking inverse images, one gets 
	a specialization $\alpha'\rightarrow \alpha$ in 
	$\Sp_r A$. By injectivity of $\phi$, the support of $\alpha'$ is also the nil ideal and hence, the support of 
	$\alpha$ lies in $\RCent A$. It follows that
        $\psi_R(\q)=\supp(\alpha)\in\RCent A$.
	
	Let us prove (2). Let $\p'\in \RCent A$. So $\p'$ is the support of $\alpha'\in \Sp_r A$
	and moreover there exists $\alpha\in \Sp_r K\cap\Sp_r A$ 
	such that $\alpha\rightarrow \alpha'$.
	
	Since $A$ and $B$ have same fraction field $K$, there is $\beta\in \Sp_r B\cap\Sp_rK$ over $\alpha$.
	By the real going-up for the real spectrum (Proposition \ref{GoingupRealSpectrum}), one deduces the existence of 
	$\beta'\in \Sp_r B$ over $\alpha'$ such that $\beta\rightarrow \beta'$
	and hence $\beta'\in \Cent B$.
	It implies also that the support of $\beta'$ is a real prime ideal
	$\q'\in\RCent B$ lying over $\p'$. 
\end{proof}

Note that property (2) remains valid as soon as 
any ordering on the 
total ring of fractions
of $A$ does extend to an ordering on the total ring of fractions of $B$. 
This is the case, for instance, if the fraction fields extension has odd degree.
 
Now, we come to the reduced case. 
Recall first that (\cite[Th 9.3 (ii)]{Ma}), given 
an integral extension $A\rightarrow B$, 
a prime ideal of $B$ lying over a minimal prime ideal of $A$ is a minimal prime ideal of $B$.
Note also that a minimal prime ideal of $B$ does not necessarily lies over a minimal prime of $A$.

Naturally, we will moreover assume that our rings have real minimal primes, namely that they satisfy \hyp.
One main difference with 
Proposition \ref{finitebirationalcentraldomains} for domains
is that we do not necessarily get an induced morphism 
$\RCent B\rightarrow \RCent A$. Take the example
for $A$ of the coordinate ring of a cubic curve with isolated point and $B=A\times A/\p$ where $\p$ is the
maximal ideal corresponding to the isolated point.

Nevertheless, one gets such a morphism when
$A$ and $B$ have same total ring of fractions. This context will be sufficient for us in the paper.

\begin{prop}\label{finitebirationalcentral}
Let $\phi:A\rightarrow B$ be an integral injective morphism of rings
satisfying \hyp. If $A$ and $B$ have same total ring of fractions, then 
the morphism $\psi_R$ induces a mapping from $\RCent B$ to $\RCent A$ and this mapping is surjective.
\end{prop}

\begin{proof}
It suffices to argue on each component and use 
Proposition \ref{finitebirationalcentraldomains}.

Namely, let $\p_1,\ldots,\p_r$ all the minimal primes of $A$
and 
$\q_1,\ldots,\q_s$ all the minimal primes of $B$.
Since $A$ and $B$ have same total field of fractions, one gets $r=s$ and, up to re-indexation, $A/\p_i\rightarrow B/\p_i$ is an integral extension of domains
having the same fraction field.

As a particular consequence, any real minimal prime in $B$ lies over a real minimal prime in $A$.
\end{proof}

Our standard geometric setting will be when $A=\Pol(X)$ and 
$B=\Pol(Y)$ are the polynomial rings of two given algebraic subsets $X$ and $Y$ together with a birational
polynomial mapping $Y\rightarrow X$.

The fact that the maximal ideals of $\Pol(X)$ that lie
in the central locus can be associated to points in $\Cent X$ can be recovered by the 
following key lemma which relates the geometric and algebraic notions of centrality. Let us emphasize that its proof is really of a semi-algebraic nature, generalizing the example of the Whitney umbrella exposed above.
\begin{lem}\label{UltrafiltersForRCenter}
	Let $V\subset X\subset \R^n$ be two algebraic sets where $V=\Z(\p)$ with $\p$ a non-zero real prime
	ideal of $\Pol(X)$.
	Then, $\p\in \RCent \Pol(X)$ if and only if $\Z(\p)$ is central in $X$.
\end{lem}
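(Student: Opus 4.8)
The plan is to transport both sides of the equivalence into the real spectrum and then compare dimensions. Write $A=\Pol(X)$ and $K=\K(X)$, so that by definition $\Cent A=\overline{\Sp_r K}$ inside $\Sp_r A$ and $\RCent A=\supp(\Cent A)$. Since $V=\Z(\p)$ is irreducible and $\p$ is real, hence real radical, we have $\Pol(V)=A/\p$. First I would recast the geometric side purely in terms of dimension: for a semialgebraic subset $S$ of the irreducible set $V$ one has $\overline{S}^Z=V$ if and only if $\dim S=\dim V$ (a proper Zariski-closed subset of an irreducible set has strictly smaller dimension, and Zariski closure preserves dimension). Applying this to $S=V\cap\Cent X$, which is semialgebraic because $\Cent X$ is, reduces the lemma to proving $\p\in\RCent A \Leftrightarrow \dim(V\cap\Cent X)=\dim V$.

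The central bridge, which I regard as the main obstacle, is the identification $\Cent A=\widetilde{\Cent X}$, where for a semialgebraic $S\subseteq X$ I write $\tilde S$ for the associated constructible subset of $\Sp_r A$ (the dictionary of \cite[Ch.~7]{BCR}). Since $\Cent X=\overline{X_{reg}}$ for the Euclidean closure and the tilde operation commutes with topological closure, one gets $\widetilde{\Cent X}=\overline{\widetilde{X_{reg}}}$. The inclusion $\overline{\Sp_r K}\subseteq\widetilde{\Cent X}$ is then easy: $X_{reg}$ is Zariski dense and open, so its complement has lower dimension and $\widetilde{X_{reg}}$ contains every point of support $(0)$, i.e.\ $\Sp_r K\subseteq\widetilde{X_{reg}}\subseteq\widetilde{\Cent X}$, and the latter is closed. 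The reverse inclusion $\widetilde{X_{reg}}\subseteq\overline{\Sp_r K}$ is the delicate point: given $\alpha\in\widetilde{X_{reg}}$, any open neighbourhood of $\alpha$ meets $\widetilde{X_{reg}}$ in an open set $\tilde W$ with $\emptyset\neq W\subseteq X_{reg}$ open; as the smooth real points of the irreducible $X$ form a manifold of pure dimension $\dim X$ (Remark~\ref{centraledim}), such a $W$ has $\dim W=\dim X$, whence $\dim\tilde W=\dim X$ forces $\tilde W$ to contain a point of support $(0)$, i.e.\ to meet $\Sp_r K$. Thus $\alpha\in\overline{\Sp_r K}$ and equality follows. (Alternatively this identification can be quoted from \cite[Prop.~7.6.4]{BCR}, the maximal-ideal shadow $\RCent A\cap\Max A=\Cent X$ being the statement recalled above.)

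With $\Cent A=\widetilde{\Cent X}$ in hand, both implications are dimension bookkeeping through three standard facts: the tilde operation is compatible with the closed immersion $V\hookrightarrow X$ (an ordering of $A$ supported in $\p$ lies in $\tilde S$ iff its image in $\Sp_r\Pol(V)$ lies in $\widetilde{S\cap V}$); $\dim\tilde S=\dim S$; and $\dim\overline{\{\beta\}}=\dim\Z(\supp\beta)$ for any $\beta\in\Sp_r A$. For the forward direction, if $\p\in\RCent A$ I pick $\alpha\in\Cent A=\widetilde{\Cent X}$ with $\supp\alpha=\p$; its image in $\Sp_r\K(V)$ lies in $\widetilde{\Cent X\cap V}$ and has closure of dimension $\dim V$, so $\dim(\Cent X\cap V)=\dim V$. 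For the converse, if $\dim(V\cap\Cent X)=\dim V$ then $\widetilde{\Cent X\cap V}$ has dimension $\dim V$, hence contains a point $\beta$ with $\dim\Z(\supp\beta)=\dim V$; since $\Z(\supp\beta)\subseteq V$ is irreducible of the same dimension it equals $V$, so $\supp\beta$ is the generic point of $V$, and pushing $\beta$ forward to $\Sp_r A$ gives an ordering supported in $\p$ and lying in $\widetilde{\Cent X}=\Cent A$, i.e.\ $\p\in\RCent A$.

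The only genuinely nontrivial input is the equality $\Cent A=\widetilde{\Cent X}$; once granted, everything else is formal. I expect the inclusion $\widetilde{\Cent X}\subseteq\overline{\Sp_r K}$ to require the most care, since the naive alternative—specialising a single generic ordering of $K$ down onto $V$—does not produce a Zariski-dense set of centres (already on a curve one ordering specialises to a single point), so the argument must go through dimension of the open sets $\tilde W\subseteq\widetilde{X_{reg}}$ rather than through individual specialisation chains.
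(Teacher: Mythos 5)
Your argument is correct, and it takes a genuinely different route from the one in the paper. The paper works directly with orderings viewed as ultrafilters of semialgebraic sets: for the implication from Zariski density to centrality it performs a local semialgebraic triangulation around a point of $\Cent V\cap\Cent X$ and explicitly manufactures a chain $\beta\rightarrow\alpha$ with $\supp\beta=(0)$ and $\supp\alpha=\p$ inside the cone $U\cap(\R^+)^{\dim X}$; for the converse it extracts from a given chain two semialgebraic sets $S'$, $T'$ with $S'$ in the Euclidean closure of $T'$ and again triangulates. You instead prove once and for all the global identity $\Cent\Pol(X)=\widetilde{\Cent X}$ (your two inclusions $\Sp_r\K(X)\subseteq\widetilde{X_{reg}}$ and $\widetilde{X_{reg}}\subseteq\overline{\Sp_r\K(X)}$ are both sound, the latter resting correctly on the fact that every nonempty open semialgebraic $W\subseteq X_{reg}$ has dimension $\dim X$, hence $\widetilde W$ meets the support-$(0)$ locus), and then reduce both implications to the dimension formula $\dim\widetilde S=\dim S=\max_{\beta\in\widetilde S}\dim(A/\supp\beta)$ together with compatibility of the tilde operation with the closed immersion $V\hookrightarrow X$. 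What your approach buys is robustness and modularity: no triangulation, no explicit cone coordinates, and it isolates the reusable statement $\Cent\Pol(X)=\widetilde{\Cent X}$ (which, as you note, is essentially \cite[Prop.~7.6.4]{BCR}, refining the maximal-ideal statement the paper already quotes). What the paper's approach buys is the explicit specialization chain $\beta'\rightarrow\alpha'\rightarrow x$ itself, which is the kind of concrete datum exploited elsewhere in the paper (e.g.\ in combination with the real going-up of Proposition~\ref{GoingupRealSpectrum}). Your closing remark is also on point: a single generic ordering does not specialize onto a Zariski-dense set of supports, so the dimension count over open sets $\widetilde W$ (or, in the paper's version, the freedom in choosing the ultrafilter) is genuinely needed.
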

\begin{proof}
	Let us assume first that $V=\Z(\p)$ is central in $X$. By Lemma \ref{lem-red}, we may assume $X$ is irreducible.
	Note that then $V$ is also the Zariski
	closure of $\Cent V\cap \Cent X$.
	Set $T=\Cent X$ and $S=\Cent V\cap\Cent X$. Note that $S$ and $T$ are two closed semi-algebraic subsets of $X$.
	Our aim is to exhibit two orderings $\alpha$ and $\beta$ 
	respectively represented as ultrafilters in $S$ and $T$ and such that $\alpha$ is a specialization of $\beta$.
	To do so, we refer to the description of orderings in
	$\R[x_1,\ldots,x_n]$ as ultrafilters of semi-algebraic sets given in
	\cite[Prop. 7.2.4 and Rem. 7.5.5]{BCR}.
	
	Let $x$ be an arbitrary point in $S$. The question being local and semi-algebraic, 
	up to a semi-algebraic triangulation (\cite[Thm. 9.2.1]{BCR}), 
	one may assume that there is a semi-algebraic neighborhood $U$ of $x$ which can be taken to be the origin 
	of $\R^n$,
	$S$ contains $U\cap ((\R^+)^{\dim V}\times 0)$
	and $T$ contains $U\cap (\R^+)^{\dim X}$. It is then classical to construct an ordering
	$\alpha$ whose support is $\p$ and an ordering $\beta$ whose support is $(0)$ such that $\alpha$ specializes $\beta$ 
	(which itself specializes to $x$). It shows that $\p\in \RCent \Pol(X)$.
	
	Let us assume now that $\p\in \RCent \Pol(X)$. 
	Take $\alpha$ an ordering whose support is $\p$ and $\beta$ another
	ordering whose support is a minimal prime ideal $\q$ of $\Pol(X)$ and such that 
	$\beta$ specializes into $\alpha$. Replacing $X$ by the irreducible
	component of $X$ corresponding to $\q$ then we may assume $X$ is
	irreducible and $\q=(0)$ and it is clearly sufficient to consider that
	situation.
	Note first that $\alpha$ specializes to a maximal point $\gamma$ of the real spectrum $\Pol(X)$ but 
	this $\gamma$ does not necessarily correspond to a geometric point, for instance $\gamma$ could 
	corresponds to a branch going to infinity.
	Nevertheless, one may use the same arguments as previously. Thinking at points of the real spectrum of $\Pol(X)$ as
	ultrafilters of semi-algebraic subsets in $X$ (see 
	again \cite[Prop. 7.2.4 and Rem. 7.5.5]{BCR})), there are semi-algebraic subsets $A\subset V$ and $B\subset \Cent X$ representing $\alpha$ and $\beta$ such that $\overline{A}^Z=V$ and $A\subset B$. As a consequence 
	$$V=\overline{A}^Z\subset \overline{V\cap \Cent X}^Z\subset V$$
therefore $V$ is central in $X$	as required.
\end{proof}

\begin{defn} Let $\pi : Y \to X$ be a finite birational map and $V\subset X$ be an irreducible algebraic subset. We say that an irreducible algebraic subset $W$ of $Y$ lies over $V$ if $\mathcal I(W)$ lies over $\mathcal I(V)$ in the integral extension $\Pol(X) \to \Pol(Y)$.
\end{defn}

The following Lemma states that for a finite birational map with target $X$ then,
generically, the fibers over points in $V\cap\Cent X$, for $V$ central in $X$, are contained in the union of the central sets lying over $V$. This result will be crucial for the proof of two important results, Proposition \ref{prop-her} and Theorem \ref{thm-loca}.

\begin{lem}\label{lemm1}
	Let $\pi : Y \to X$ be a finite birational map and $V\subset X$ be an irreducible algebraic set central in $X$. Let $W_1,\ldots,W_r$ be the irreducible algebraic subsets of $Y$ lying over $V$ and central in $Y$. There exists an algebraic subset $Z\subset V$ with $\dim Z<\dim V$ such that the following inclusion holds :
	$$\pi^{-1}\big( (V\setminus Z)\cap \Cent X \big) \cap \Cent Y ~~\subset ~~W_1\cup \cdots \cup W_r.$$
\end{lem}

\begin{proof}
	Consider the decomposition of the algebraic subset $\pi^{-1}(V)$ of $Y$ into irreducible components. 
	Let $W$ be one of this components. Then $\dim W \leq \dim V$ by finiteness of $\pi$, and assume that the dimension of the semialgebraic set $W \cap \Cent Y$ is $\dim V$. Then this dimension is equal to $\dim W$ too, so that $W$ is central in $Y$ by Lemma \ref{UltrafiltersForRCenter}. In particular $W$ is equal to one of the $W_i$, with $i\in \{1,\ldots,r\}$.

	Denote by $W'$ the union of the irreducible components of $\pi^{-1}(V)$ different from $W_1,\ldots,W_r$. Then we have just proven that 
	$$\dim W' \cap \Cent Y < \dim V.$$ 
	Thus the algebraic subset $Z$ of $V$ defined by 
	$$Z=\overline{\pi (W' \cap \Cent Y )}^Z$$
	satisfies the required conditions.
\end{proof}

\begin{ex} Consider the normalization $X'=\mathcal Z(z^2-(1+x^2))\subset \R^3$ of the surface defined by $X=\mathcal Z(z^2-(1+x^2)(x^2+y^2)^2)$ in $\R^3$. Note that $X$ together with $X'$ are central, $X$ has a unique singular point at the origin, and that $\pi :X'\to X$ is defined by $(x,y,z)\mapsto (x,y,z(x^2+y^2))$.

	There is a unique irreducible curve $W$ lying over the irreducible curve $V\subset X$ defined by $x=0$ and $z=y^2$, and $W$ is the line given by $x=0$ and $z=1$. However the preimage of the origin in $V$ consists of the two points $(0,0,\pm 1)$ and that $(0,0,-1)$ does not belong to $W$. Here $Z$ consists of the origin in $V$.
\end{ex}

To end this section, let us note that from Proposition
\ref{finitebirationalcentral} and Lemma \ref{UltrafiltersForRCenter},
we deduce some central lying over properties for integral extensions
of geometric rings. 
\begin{prop}
\label{normalizationcentral2}
Let $\pi:Y\rightarrow X$ be a finite birational map between
algebraic sets.
Then the mapping $\RCent \Pol(Y)\to\RCent \Pol(X)$ is surjective.
When $X$ is central, $\Pol(X)\rightarrow \Pol(Y)$ satisfies the real lying over property.
\end{prop}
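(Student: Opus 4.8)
The plan is to obtain both assertions by specializing the algebraic results on integral extensions of domains to $A=\Pol(X)$, $B=\Pol(Y)$, with $\phi=\pi^*:\Pol(X)\to\Pol(Y)$. First I would check that $\phi$ meets the hypotheses of Proposition~\ref{finitebirationalcentral}: finiteness of $\pi$ makes $\phi$ integral, birationality makes $\pi$ dominant so that $\phi$ is injective and $\Pol(X)$, $\Pol(Y)$ share the fraction field $\K(X)=\K(Y)$, and irreducibility of $X$ and $Y$ makes both rings domains. Granting this, the central lying over property is exactly part~(2) of Proposition~\ref{finitebirationalcentral}, namely the surjectivity of $\psi_R:\RCent\Pol(Y)\to\RCent\Pol(X)$; no further argument is needed for the first assertion.

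For the second assertion I would reduce the real lying over property to the first one by showing that centrality of $X$ forces $\RCent\Pol(X)=\ReSp\Pol(X)$, i.e. that every real prime of $\Pol(X)$ already sits in the central locus. The inclusion $\RCent\Pol(X)\subseteq\ReSp\Pol(X)$ is immediate from the definitions. For the reverse inclusion I would split according to the prime. The zero ideal lies in $\RCent\Pol(X)$ because it is the support of any ordering of $\K(X)$, that is of any point of $\Sp_r\K(X)\subseteq\Cent\Pol(X)$. For a non-zero real prime $\p$, writing $V=\Z(\p)$, Lemma~\ref{UltrafiltersForRCenter} characterizes $\p\in\RCent\Pol(X)$ by the equality $\Z(\p)=\overline{\Z(\p)\cap\Cent X}^Z$; under the hypothesis $X=\Cent X$ this becomes $V=\overline{V}^Z=V$, which holds trivially. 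Hence every real prime lies in $\RCent\Pol(X)$ and the claimed identity follows.

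Finally I would combine the two steps. By the first assertion $\psi_R(\RCent\Pol(Y))=\RCent\Pol(X)$, and by the previous paragraph this equals $\ReSp\Pol(X)$; since $\RCent\Pol(Y)\subseteq\ReSp\Pol(Y)$, the image of the whole real Zariski spectrum $\ReSp\Pol(Y)$ under $\psi_R$ contains $\ReSp\Pol(X)$, so $\psi_R:\ReSp\Pol(Y)\to\ReSp\Pol(X)$ is surjective, which is precisely the real lying over property for $\phi$. The only genuinely substantive point is the identity $\RCent\Pol(X)=\ReSp\Pol(X)$ under centrality, and even that reduces at once to Lemma~\ref{UltrafiltersForRCenter} once the zero prime is handled separately; the rest is a matter of matching the geometric data of $\pi$ to the hypotheses of the propositions already proved.
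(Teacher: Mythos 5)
Your proof is correct and follows the route the paper intends: the paper states this proposition as an immediate deduction from Proposition~\ref{finitebirationalcentral} without writing out the details, and your first assertion is exactly part~(2) of that proposition applied to $\pi^*$. For the second assertion the paper is silent, and your identification $\RCent\Pol(X)=\ReSp\Pol(X)$ for central $X$ (zero ideal handled via $\Sp_r\K(X)\neq\emptyset$, non-zero real primes via Lemma~\ref{UltrafiltersForRCenter}) is precisely the missing detail, correctly supplied.
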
 

In the forthcoming subsection, we come back to the algebraic setting and study a property
stronger than the central lying over.

\subsection{Central subintegral extensions}\label{sect-CSE}
Recall (\cite{V}) that an integral extension $A\rightarrow B$ 
is said subintegral if for any prime ideal $\p\in\Sp A$, there exists a unique
prime ideal $\q\in\Sp B$ lying over $\p$ and, moreover, the induced injective map on
the residue fields $k(\p)\rightarrow k(\q)$ is an isomorphism. 

One natural way to define a real subintegral extension $A\rightarrow B$ would be : 
it is an integral extension and given any real prime ideal $\p$ of $A$, there exists a unique
real prime ideal $\q$ of $B$ lying over $\p$ and the induced map on
the residue fields $k(\p)\rightarrow k(\q)$ is an isomorphism.
Due to centrality issues, we have to take into account central loci also and this leads to the following:

\begin{defn}
\label{defsrgu}
Let $A\rightarrow B$ be an integral extension of rings satisfying \hyp.
\begin{enumerate}
\item We say that $A\rightarrow B$ is centrally weakly subintegral ($w_c$-subintegral for short)
if, given any real maximal ideal $\p\in\RCent A\cap \Max A$, there exists a unique
real maximal $\q\in\RCent B\cap\Max B$ lying over $\p$ and the induced injective map on
the residue fields $k(\p)\rightarrow k(\q)$ is an isomorphism. 
\item We say that $A\rightarrow B$ is centrally subintegral ($s_c$-subintegral for short) if,
given any real prime ideal $\p\in\RCent A$, there exists a unique
real prime $\q\in\RCent B$ lying over $\p$ and the induced injective map on
the residue fields $k(\p)\rightarrow k(\q)$ is an isomorphism.
\end{enumerate}
\end{defn} 

The definitions of $w_c$ and $s_c$-subintegral extensions will lead to the notion of weak-normalization and seminormalization relative to the central loci introduced in \ref{WeakSemicentralLocus}.
It is worth mentioning that replacing prime ideals with maximal
ideals in the definition of subintegral extension gives rise to the same notion, at least for geometric rings.
On the contrary, if a $s_c$-subintegral extension is indeed a $w_c$-subintegral extension, the converse is false in general. Example \ref{K-surf} illustrates for instance that we may keep a bijection at the level of prime ideals, but loosing the equiresiduality condition. Example \ref{Ex-primepasbij} shows that we may loose the bijection between central real prime ideals too.

Centrally (weakly) subintegral extensions are stable under composition:
\begin{prop}\label{subintegralcomposition}
 Let $A\stackrel{\phi}{\rightarrow} B\stackrel{\psi}{\rightarrow} C$
 be two integral extensions of rings.
 Assume either that $A,B, C$ are domains, or $A,B,C$ satisfy \hyp and have same total ring of fractions. One has:
 \begin{enumerate}
  \item If $\phi$ and $\psi$ are both $w_c$-subintegral, then $\psi\circ\phi$ is also $w_c$-subintegral, 
  \item If $\phi$ and $\psi$ are both $s_c$-subintegral, then $\psi\circ\phi$ is also $s_c$-subintegral. 
 \end{enumerate}
\end{prop}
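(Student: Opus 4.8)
The plan is to treat both statements in parallel, the only difference being that case (1) quantifies over real maximal ideals in the central locus while case (2) quantifies over all real primes in the central locus. First I would record that $\psi\circ\phi:A\rightarrow C$ is again an integral injective morphism of domains, so that the two notions of Definition \ref{defsrgu} make sense for it: integrality of a composite of integral extensions and injectivity of a composite of injections are standard. Throughout I write contractions as $\q\cap A$, and I freely use that for an integral extension a prime lies over a maximal ideal if and only if it is itself maximal, and that the contraction of a maximal ideal is maximal.

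For existence in case (2), start from $\p\in\RCent A$. Since $\phi$ is $s_c$-subintegral there is a unique $\q\in\RCent B$ over $\p$ with $k(\p)\rightarrow k(\q)$ an isomorphism; applying the $s_c$-subintegrality of $\psi$ to $\q\in\RCent B$ produces $\ir\in\RCent C$ over $\q$ with $k(\q)\rightarrow k(\ir)$ an isomorphism. Then $\ir$ lies over $\p$, belongs to $\RCent C$, and the composite $k(\p)\rightarrow k(\q)\rightarrow k(\ir)$ is an isomorphism, which furnishes a candidate for the unique prime over $\p$.

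The heart of the argument is uniqueness. Suppose $\ir'\in\RCent C$ lies over $\p$ with $k(\p)\rightarrow k(\ir')$ an isomorphism, and set $\q'=\ir'\cap B$. Here I would invoke Proposition \ref{finitebirationalcentral}(1) applied to the integral injective extension $\psi:B\rightarrow C$: it guarantees $\q'=\psi_R(\ir')\in\RCent B$. Next, from the factorization $k(\p)\hookrightarrow k(\q')\hookrightarrow k(\ir')$ of the residue-field maps, all injective since they come from integral extensions, the outer map being an isomorphism forces each intermediate map to be an isomorphism (a surjective composite through an injection makes the second factor surjective, hence bijective, and then the first factor bijective as well); in particular $k(\p)\rightarrow k(\q')$ is an isomorphism. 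The uniqueness clause for $\phi$ then yields $\q'=\q$, so $\ir'$ lies over $\q$ with $k(\q)\rightarrow k(\ir')$ an isomorphism, and the uniqueness clause for $\psi$ yields $\ir'=\ir$. This settles case (2).

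Case (1) is identical in structure, restricting the entire discussion to maximal ideals. The only extra verifications are that maximality is preserved at each step: passing from $\p\in\RCent A\cap\Max A$ up to $\q$ and then to $\ir$ keeps maximality because these are the very primes supplied by the $w_c$-subintegrality hypotheses, and contracting $\ir'\in\RCent C\cap\Max C$ down to $\q'=\ir'\cap B$ keeps maximality by integrality of $\psi$. I expect the main obstacle to be purely a matter of bookkeeping: ensuring that the intermediate object $\q'$ obtained by contraction genuinely lands in $\RCent B$ (respectively $\RCent B\cap\Max B$) so that the uniqueness hypothesis for $\phi$ is applicable. This is precisely the place where Proposition \ref{finitebirationalcentral} and the integrality of the extensions are indispensable.
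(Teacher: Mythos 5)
Your proof is correct and follows essentially the same route as the paper's own (two-line) proof, which states that existence and equiresiduality are clear by transitivity and that uniqueness comes from Proposition \ref{finitebirationalcentral}; you have simply written out the details of both halves. One small remark: since the paper elsewhere (e.g.\ in the proofs of Propositions \ref{UniversalWCrings} and \ref{bijfinitebiratbis-reg}) reads the uniqueness clause of Definition \ref{defsrgu} as injectivity of the contraction map on central real primes (resp.\ maximal ideals), you may as well drop the equiresiduality hypothesis on $\ir'$ in your uniqueness step --- the contraction argument via Proposition \ref{finitebirationalcentral}(1) then shows directly that \emph{any} $\ir'\in\RCent C$ lying over $\p$ equals $\ir$, which is both the stronger statement and a shorter argument.
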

\begin{proof}
 The existence and the equiresiduality properties are clear by transitivity.
 The uniqueness property comes from Propositions \ref{finitebirationalcentraldomains}.1) and \ref{finitebirationalcentral}.
\end{proof}

In \cite{FMQ-futur} has been introduced the concept of
biregular extensions of rings ; it happens that these extensions are examples of
$w_c$ and $s_c$-subintegral extensions. Let us explain this fact now.

Recall that the 
ring $\SO(A)$ 
of
regular fractions of elements in $A$ is obtained from $A$ by inverting all elements in $1+\sum A^2$, and moreover
that $\SO(\Pol(X))$ coincide with the usual ring of regular functions when $X$ is an algebraic set.


In case $\SO(A)\to\SO(B)$ is an isomorphism we say that the extension $A\to B$ is biregular.

When $A\to B$ is an integral extension of rings satisfying \hyp, by \cite[Prop. 4.11]{FMQ-futur}, we know that
the extension $A\to B$ is biregular if and only if one
of the following equivalent conditions holds:
\begin{enumerate}
	\item Given any  ideal $\m\in\ReMax A$, there exists a unique
	maximal $\m'\in\Max B$ lying over $\m$ and moreover $\m'$ is real and the
	map $A_{\m}\to B_{\m'}$ is an isomorphism. 
		\item 
		Given any real prime ideal $\p\in\RSp A$, there exists a unique
		prime $\q\in\Sp B$ lying over $\p$ and moreover $\q$ is real and the
		map $A_{\p}\to B_{\q}$ is an isomorphism. 
		
\end{enumerate}

It is clear from this characterization that: 
\begin{cor} \label{implbireg}
 Let $A\to B$ be a biregular integral extension of rings satisfying
 \hyp. Then $A\to B$ is $s_c$-subintegral and $w_c$-subintegral.
\end{cor}


\subsection{Weak-normalization and seminormalization relative to the central locus of a ring}\label{CentrallyNormalizationRing}
In close relation with the notion of normalization, Traverso
\cite{T} has introduced the seminormalization of a ring $A$ with
integral closure denoted by $A'$, to be the ring $$^+A=\{f\in A'|\, \forall\p\in\Sp
A,\,f_{\p}\in A_{\p}+\JRad(A'_{\p})\}$$ 
where $f_{\p}$ is the image of $f$ by $A'\to A'_{\p}$ and where
$\JRad(B)$ denotes the Jacobson radical of $B$, i.e. the intersection
of all maximal ideals in the ring $B$.
Notice that for a ring $C$ and a multiplicative closed subset $S$ of
$C$, there is no confusion in writing $C'_{S}$ since $(C_S)'\simeq
(C')_S$ (\cite[Prop. 5.12]{AM}).

Inspiring from \cite{AN}, where only complex analytic varieties are considered, one may also define 
the weak-normalization of $A$ as the ring $$\{f\in A'|\, \forall\m\in\Max
A,\,f_{\m}\in A_{\m}+\JRad(A'_{\m})\}.$$

Note that for finite type algebras over an algebraically closed field of characteristic zero, these two notions of semi and weak normalization coincide as it is shown in \cite[Thm. 2.2]{LV} (when the characteristic is not zero see \cite{V}).

We are going to define real counterparts of these two notions, relatively to the central locus of a ring ; it is worth to advertise already that in our context, the two notions will be distinct.

We start with defining, for any ring $A$, the central radical $\JRadCe A$ of $A$ to be the intersection 
of all real maximal ideals $\m$ which are also in $\RCent A$. 
In case the ring $A$ is central, the central radical of $A$ coincides with the real Jacobson radical of $A$, i.e. 
the intersection of all real maximal ideals.

Now, we particularly focus on properties of integral
extensions of rings
contained in the integral closure. 
Let $A$ be a ring satisfying \hyp\, and with minimal prime ideals
$\p_1,\ldots,\p_t$. By Proposition \ref{ReducedIntegralClosure}, the
total ring of fractions $K$ of $A$ is given by
$K=\prod_{i=1}^tk(\p_i)$ (where $k(\p_i)$ denotes the residue field at
$\p_i$) and the integral closure $A'$ of $A$ is given by
$\prod_{i=1}^t (A/\p_i)'$. It follows that $A'$ satisfies also \hyp,
the minimal prime ideals of $A'$ are in bijection with those of $A$
and $K$ is also the total ring of fractions of
$A'$. The same properties remain for any intermediate ring between $A$
and $A'$ which yield:
\begin{prop}\label{finitebirationalcentralintclos}
	 Let $A\rightarrow B\rightarrow A'$ be a sequence of ring extensions
	where $A$ satisfies \hyp\, and $A'$ is the integral closure  of $A$.
	Then, one has a surjective mapping from $\RCent B$
	to $\RCent A$.
\end{prop}
\begin{proof}
	According to \cite[Lem. 2.8]{FMQ-futur}, $B$ satisfies also \hyp, the
	minimal prime ideals of $B$ and $A$ are in bijection 
	and $A\to B$ induces an isomorphism between the total rings
	of fractions.
	It suffices then to apply Proposition
	\ref{finitebirationalcentral}. 
      \end{proof}

We prove that sub-extensions of (weakly) centrally subintegral extensions contained
in the integral closure are (weakly) centrally subintegral.
\begin{prop}
  \label{intermedwc}
  Let $A\to C\to B\to A'$ be a sequence of ring extensions where $A$
  satisfies \hyp\, and $A'$ is the integral closure  of $A$. One has:
  \begin{enumerate}
    \item If $A\to
  B$ is $w_c$-subintegral then $A\to
  C$ is also $w_c$-subintegral.
\item If $A\to
  B$ is $s_c$-subintegral then $A\to
  C$ is also $s_c$-subintegral.
\end{enumerate}
\end{prop}

\begin{proof} Notice that the rings $A$, $B$, $C$ and $A'$ satisfy
  \hyp\, and have isomorphic total rings of fractions \cite[Lem. 2.8]{FMQ-futur}. Assume $A\to
  B$ is $w_c$-subintegral and let $\p\in\RCent A\cap \MSp A$. By
  Proposition \ref{finitebirationalcentralintclos} we know that the maps
  $\RCent C\cap \MSp C\to\RCent A\cap \MSp A$ and $\RCent B\cap \MSp
  B\to\RCent C\cap \MSp C$ are surjective and moreover $\RCent B\cap
  \MSp B\to\RCent A\cap \MSp A$ is bijective ($A\to
  B$ is $w_c$-subintegral). It follows that there exists a unique central
  maximal ideal of $C$ lying over $\p$ and this ideal is $\q\cap C$ where
  $\q$ is the unique central maximal ideal of $B$ lying over $\p$. We
  get a sequence of finite extensions of fields $k(\p)\to k(\q\cap
  C)\to k(\q)$ and since $k(\p)$ and $k(\q)$ are isomorphic then the
  proof of (1)
  is done. The proof of (2) is similar replacing maximal ideals by
  prime ideals.
\end{proof}


Here is the central version of weak-normalization:
\begin{defn} Let $A$ be a ring which has an integral closure denoted by
$A'$. The ring $$A^{w_c}=\{f\in A'|\, \forall\m\in\RCent
A \cap \MSp A,\,f_{\m}\in
A_{\m}+\JRadCe(A'_{\m})\}$$ 
is called the weak-normalization of $A$ relative to its central locus, or $w_c$-normalization for short. In case $A=A^{w_c}$, we say that $A$ is
centrally weakly-normal.
\end{defn}


The weak-normalization relative to the central locus satisfies a universal property, that will be an important step for the study of the geometric $w_c$-normalization in the sequel.
\begin{prop}\label{UniversalWCrings}
  Let $A$ be a ring satisfying \hyp\, which has an integral closure denoted by
$A'$. For an extension $A\rightarrow B$ which injects into $A'$ the
following properties are equivalent:
\begin{enumerate}
 \item[(i)] The extension $A\to B$ is $w_c$-subintegral.
 \item[(ii)] $B\subset A^{w_c}$.
\end{enumerate}
\end{prop}

\begin{proof}
We begin by showing that $A\to A^{w_c}$ is $w_c$-subintegral. Let us mention first that, according to Proposition \ref{finitebirationalcentralintclos}, 
one has a canonical surjection from 
$\RCent A^{w_c}\cap \Max A^{w_c}$ onto $\RCent A\cap \Max A$. To show the injectivity, let us consider $\q_1$ and $\q_2$
in $\RCent A^{w_c}\cap \Max A^{w_c}$ lying over $\p\in\RSp A\cap \Max A$.
Using again Proposition \ref{finitebirationalcentral}, one has two central maximal ideals
$\ir_1$ and $\ir_2$ in $\RCent A'\cap\Max A'$ lying over $\q_1$ and $\q_2$ respectively.
Let $f\in \q_1$, 
then $f_ {\p}$ $=\alpha+\beta$ where $\alpha\in A_ {\p}$ and $\beta\in\JRadCe{A'_{\p}}$.
Then, $\beta \in \ir_1\cap \ir_2$ and hence $\beta\in \q_2(A^{w_c})_{\p}$ and $\alpha\in\p A_{\p}$. 
We get then $f\in \q_2(A^{w_c})_{\m}$
for any maximal ideal $\m$ in $A$ 
(if $\m\not=\p$, one has $\q_2(A^{w_c})_{\m}=(A^{w_c})_{\m}$).
This shows (using for instance \cite[Thm. 4.6]{Ma}) that $f\in \q_2$ and hence 
$\q_1\subset\q_2$. By symmetry $\q_1=\q_2$.

It remains to show that $A^{w_c}$ is equiresidual over $A$ at any
central maximal ideal $\m$ in $A$.
Let us consider the ideal 
$\m^{w_c}=(\JRadCe{A'_{\m}})\cap (A^{w_c})_{\m}$. 
Using the natural identification between 
the prime ideals in $A'$ lying over $\m$ and
the maximal ideals of ${A'}_{\m}$ lying over $\m A_{\m}$,
it is clear that, over $\m^{w_c}$ in $A'_{\m}$
lie all the central maximal ideals  of 
$A'_{\m}$ and hence $\m^{w_c}$ is maximal and central.
Moreover $\m^{w_c}$ lies over $\m A_{\m}$
Then, $\m^{w_c}$ is the only
central real maximal ideal of $(A^{w_c})_{\m}$ lying over $\m A_{\m}$. Moreover, one 
clearly has $A/\m\simeq (A^{w_c})_{\m}/\m^{w_c}$ and thus $A\to A^{w_c}$ is $w_c$-subintegral.

Consider an extension $A\rightarrow B$ which injects into $A'$. If
$B\subset A^{w_c}$ then $A\to B$ is $w_c$-subintegral by 1) of Proposition
\ref{intermedwc}. It follows that (ii) implies (i).

Let us show now that (i) implies (ii). Assume $A\to B$ is
$w_c$-subintegral and $B\subset A'$. 
By definition of $A^{w_c}$, one has to show that $B_{\p}\subset A_{\p}+\JRadCe{A'_{\p}}$ 
for any maximal $\p$ in $\RCent A$.
Let us denote by $\q$ the unique 
maximal ideal in $\RCent B$ lying over $\p$. 
By Proposition
\ref{finitebirationalcentralintclos}, one has $\q A'_{\p}\subset \JRadCe{A'_{\p}}$ which shows that $\q B_{\p}\subset
\JRadCe{A'_{\p}}$. We use the following commutative diagram
\[
\begin{array}{ccc}
k(\p) & \simeq & k(\q)  \\
\uparrow &&\uparrow\\
A_{\p} &\rightarrow & B_{\p}
\end{array}
\]\\
Let $b\in B$. There exist $a\in A$ and $s\in A\setminus \p$ such that
$a/s=b$ in $k(\q)$. Hence $a-sb\in\q$ and thus $b-a/s\in \q
B_{\p}=\ker (B_{\p}\to k(\q))$. We get $b\in A_{\p}+\JRadCe{A'_{\p}}$.
This concludes the proof.
\end{proof}


Using Propositions \ref{subintegralcomposition} and \ref{UniversalWCrings}, one readily derives 
an idempotency property:
\begin{prop}\label{WeakIdempotency}
For any ring $A$ satisfying \hyp, the ring $A^{w_c}$ is centrally weakly normal.
\end{prop}

We introduce now the concept of seminormalization relative to the central locus.

\begin{defn}\label{defrealseminormal}
Let $A$ be a ring which has an integral closure denoted by
$A'$. The ring $$A^{s_c}=\{f\in A'|\, \forall\p\in\RCent
A,\,f_{\p}\in
A_{\p}+\JRadCe{A'_{\p}}\}$$ 
is
called the seminormalization of $A$ relative to its central locus, or $s_c$-normalization for short. In case $A=A^{s_c}$, we say that $A$ is
centrally seminormal.
\end{defn}

The seminormalization relative to the central locus satisfies a universal property in the same spirit as the weak normalization to the central locus,
and the proof is similar as that of Proposition \ref{UniversalWCrings} (it suffices to replace
maximal ideals with prime ideals). 
It will be an important result for the geometric $s_c$-normalization in the sequel.
\begin{prop}\label{UniversalNCrings}
 Let $A$ be a ring satisfying \hyp\, which has an integral closure denoted by
$A'$. For an extension $A\rightarrow B$ which injects into $A'$ the
following properties are equivalent:
\begin{enumerate}
 \item[(i)] The extension $A\to B$ is $s_c$-subintegral.
 \item[(ii)] $B\subset A^{s_c}$.
\end{enumerate}
\end{prop}

\begin{rem}
	We have a sequence of inclusions
$$A\subset {}^+A \subset  A^{s_c} \subset A^{w_c} \subset
A',$$
so that a centrally weakly-normal ring is automatically centrally seminormal, and similarly a centrally
seminormal ring is automatically seminormal. 
\end{rem}

Using Propositions \ref{subintegralcomposition}, \ref{WeakIdempotency} and \ref{UniversalNCrings}, one readily gets 
an idempotency property:
\begin{prop}\label{SemiIdempotency}
For any ring $A$ satisfying \hyp\, one has :
\begin{enumerate}
 \item $A^{s_c}$ is centrally seminormal,
 \item $(A^{w_c})^{s_c}=A^{w_c}$,
 \item $(A^{s_c})^{w_c}=A^{w_c}$.
\end{enumerate}
\end{prop}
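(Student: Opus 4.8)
Proposition \ref{SemiIdempotency} asserts three idempotency identities. Let me plan out how to prove each.

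<br>

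The plan is to derive all three identities from the two universal properties already established (Proposition \ref{UniversalWCrings} and Proposition \ref{UniversalNCrings}) together with the composition stability (Proposition \ref{subintegralcomposition}), exactly as the paragraph preceding the statement suggests. The unifying strategy is: to show that an operation applied to some ring $R$ leaves it unchanged, it suffices to show that $R$ is already maximal for the relevant subintegrality property inside $R' = A'$; and to compare the $w_c$- and $s_c$-operations, I exploit that an $s_c$-subintegral extension is in particular $w_c$-subintegral.

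<br>

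First, for part (1), $(A^{s_c})^{s_c} = A^{s_c}$: I set $B = A^{s_c}$ and note that $B$ and $A$ have the same integral closure $A'$ (since $B$ is an intermediate ring between $A$ and $A'$, its integral closure in the common fraction field $K$ is again $A'$). By Proposition \ref{UniversalNCrings}(1), the extension $A \to A^{s_c} = B$ is $s_c$-subintegral. Now $B^{s_c}$ is by definition $s_c$-subintegral over $B$ by Proposition \ref{UniversalNCrings}(1), so the composite $A \to B \to B^{s_c}$ is $s_c$-subintegral by Proposition \ref{subintegralcomposition}(2). Since $B^{s_c} \subset A' $, the universal property Proposition \ref{UniversalNCrings}(2) applied to the $s_c$-subintegral extension $A \to B^{s_c}$ forces $B^{s_c} \subset A^{s_c} = B$. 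The reverse inclusion $B \subset B^{s_c}$ is immediate since any ring is contained in its own $s_c$-normalization. Hence $B^{s_c} = B$, which is identity (1). The analogous argument using Proposition \ref{WeakIdempotency} already covers the pure $w_c$ case, so I may simply cite it rather than reprove it.

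<br>

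Next, for the mixed identities, I combine the two operations. For part (3), $(A^{s_c})^{w_c} = A^{w_c}$: writing $B = A^{s_c}$, the extension $A \to B$ is $s_c$-subintegral, hence in particular $w_c$-subintegral (as noted in the text, every $s_c$-subintegral extension is $w_c$-subintegral). Therefore $A \to B \to B^{w_c}$ is $w_c$-subintegral by Proposition \ref{subintegralcomposition}(1), and since $B^{w_c} \subset A'$, Proposition \ref{UniversalWCrings}(2) gives $B^{w_c} \subset A^{w_c}$. For the reverse inclusion, $A \to A^{w_c}$ is $w_c$-subintegral by Proposition \ref{UniversalWCrings}(1); since $A^{w_c}$ is an intermediate ring it injects into $A'$, and the $w_c$-subintegral extension $A = A \to A^{w_c}$ factors through $B = A^{s_c}$ only if we know $A^{s_c} \subset A^{w_c}$, which holds because every point of $\RCent A$ gives in particular a maximal central ideal condition, so the $s_c$-defining condition is stronger than the $w_c$-one and $A^{s_c} \subset A^{w_c}$. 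Then $A^{w_c}$ is $w_c$-subintegral over $B$, so $A^{w_c} \subset B^{w_c}$ by the universal property, giving equality. Identity (2), $(A^{w_c})^{s_c} = A^{w_c}$, follows the same template: $A \to A^{w_c}$ being $w_c$-subintegral, I must show $A^{w_c}$ is already centrally seminormal. The key observation here — and the one delicate point I expect to be the main obstacle — is to verify that the $s_c$-operation does not enlarge $A^{w_c}$, i.e. that $(A^{w_c})^{s_c} \subset A^{w_c}$; this requires checking that the prime-level condition imposed by $s_c$-normalization at non-maximal central primes $\p \in \RCent(A^{w_c})$ adds nothing beyond what the maximal central ideals already enforce for $A^{w_c}$, which is exactly where the surjectivity from Proposition \ref{finitebirationalcentral} and the equiresiduality established in the proof of Proposition \ref{UniversalWCrings} must be invoked to relate the localizations at $\p$ to those at the maximal ideals containing it. Once this containment is secured, the reverse inclusion $A^{w_c} \subset (A^{w_c})^{s_c}$ is automatic, completing the proof.
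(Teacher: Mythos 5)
Your parts (1) and (3) are correct and follow exactly the route the paper intends (the paper offers no written proof beyond ``using Proposition \ref{subintegralcomposition} and Proposition \ref{UniversalNCrings}, one readily gets''): compose subintegral extensions and invoke the maximality clause of the universal property. One small unstated step in (3): to get ``$A^{w_c}$ is $w_c$-subintegral over $B=A^{s_c}$'' you are implicitly using that $w_c$-subintegrality of $A\to C$ passes to the intermediate extension $B\to C$; this is true (contract a central maximal ideal of $B$ to $A$, use Proposition \ref{finitebirationalcentral} for existence of a central maximal ideal of $C$ above it, and uniqueness plus equiresiduality descend from $A$), but it deserves a line.

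The genuine gap is part (2): you do not prove it. You correctly identify that the content is $(A^{w_c})^{s_c}\subset A^{w_c}$, but you then misdiagnose this as requiring a delicate analysis of the prime-level condition at non-maximal $\p\in\RCent(A^{w_c})$ via localizations and equiresiduality, and you leave that analysis undone. No such analysis is needed: the identical two-line template you used for (1) and (3) closes it. Set $C=A^{w_c}$. By Proposition \ref{UniversalNCrings}(1) the extension $C\to C^{s_c}$ is $s_c$-subintegral, hence $w_c$-subintegral (an $s_c$-subintegral extension is $w_c$-subintegral, as the paper notes after Definition \ref{defsrgu}); composing with the $w_c$-subintegral extension $A\to C$ (Proposition \ref{UniversalWCrings}(1)) via Proposition \ref{subintegralcomposition}(1), the extension $A\to C^{s_c}$ is $w_c$-subintegral and injects into $C'=A'$, so Proposition \ref{UniversalWCrings}(2) gives $C^{s_c}\subset A^{w_c}=C$; the reverse inclusion is trivial. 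The point you missed is that the universal property lets you avoid ever unwinding the definition of $C^{s_c}$ in terms of $C_\p+\JRadCe(C'_\p)$.
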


We end this section with a comparison with the biregular integral closure that has been defined in \cite{FMQ-futur}, in relation with the notion of biregular extension recalled in section \ref{sect-CSE}. 

Let $A$ be a ring satisfying \hyp. The biregular integral closure
$A^b$ of $A$ is the integral closure of
$A$ in $\SO(A)$, namely $A^b=A'_{\SO(A)}$. By
\cite[Thm. 4.12]{FMQ-futur}, $A^b$ is the biggest ring contained in
$A'$ which is a biregular extension of $A$.
Since biregular extensions are $s_c$-subintegral by Corollary
\ref{implbireg}, we have a sequence of inclusions
$$A\subset A^b \subset  A^{s_c} \subset A^{w_c} \subset A'.$$ 
It follows also that a centrally
seminormal ring (which is automatically seminormal) is equal to its biregular integral closure.




\section{Finite birational maps and continuous functions}\label{finite-birat}

This section is devoted to a topological study of finite birational maps in real geometry. We begin with considering Euclidean topology, and the restriction of the maps to the central loci. We study later the action of finite birational maps on continuous rational functions, respectively hereditarily continuous rational functions, on the central loci. The main result is Theorem \ref{bijfinitebiratbis} (and Theorem \ref{bijfinitebiratbis-reg} for the hereditary case) which gives necessary and sufficient conditions on the restriction to the central loci to be an homeomorphism.

\subsection{Properties of finite birational maps}

A birational map $\pi:Y\rightarrow X$ between
algebraic sets is a polynomial
map that induces an isomorphism from a Zariski-dense open subset of
$Y$ to a Zariski-dense open subset of $X$. Equivalently, $\pi$ is birational if and only if $\pi$ induces an
isomorphism $\K(X)\simeq \K(Y)$ at the level of the total rings of fractions.
Such a map $\pi$ is
defined everywhere, but its inverse may have indeterminacy points.  
Recall that $\pi$ is finite
if the associated ring morphism $\Pol(X)\rightarrow \Pol(Y)$ makes
$\Pol(Y)$ a finite $\Pol(X)$-module, a property which implies that the 
ring extension $\Pol(X)\rightarrow \Pol(Y)$ is integral.

\begin{lem} 
\label{closedeucl}
Let $\pi:Y\rightarrow X$ be a birational map between
algebraic sets. The induced ring morphism $\Pol(X)\rightarrow \Pol(Y)$ is injective. If moreover $\pi$ is finite, then the map $\pi$ is proper (and hence closed) for the Euclidean topology.
\end{lem}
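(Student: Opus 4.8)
The plan is to treat the two assertions separately, beginning with the injectivity of $\phi:\Pol(X)\to\Pol(Y)$, $p\mapsto p\circ\pi$. Since $\pi$ is birational, there is a dense Zariski open subset $V\subset Y$ that $\pi$ maps isomorphically onto a dense Zariski open subset $U\subset X$. Suppose $p\in\Pol(X)$ satisfies $p\circ\pi=0$ in $\Pol(Y)$; then in particular $p$ vanishes on $U=\pi(V)$. Since $U$ is Zariski dense in $X$, the polynomial function $p$ vanishes on $\overline{U}^Z=X$, hence $p=0$ in $\Pol(X)$. Equivalently, one may invoke that $\pi$ induces an isomorphism $\K(X)\simeq\K(Y)$ whose restriction to $\Pol(X)\hookrightarrow\K(X)$ is automatically injective.

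Next I would assume $\pi$ finite and write $Y\subset\R^m$ with coordinate functions $y_1,\ldots,y_m\in\Pol(Y)$. Finiteness makes $\Pol(Y)$ a finite $\Pol(X)$-module, so the extension $\Pol(X)\to\Pol(Y)$ is integral; in particular each $y_i$ satisfies a monic relation
\begin{equation*}
y_i^{d_i}+(b_{i,d_i-1}\circ\pi)\,y_i^{d_i-1}+\cdots+(b_{i,0}\circ\pi)=0
\end{equation*}
with $b_{i,j}\in\Pol(X)$. The crux is to deduce properness from these relations. Let $K\subset X$ be compact; since $Y$ is closed in $\R^m$ and $\pi$ is continuous, $\pi^{-1}(K)$ is closed in $\R^m$, so it suffices to bound it. For $y\in\pi^{-1}(K)$ set $x=\pi(y)\in K$; evaluating the monic relation at $y$ and applying the elementary root bound yields $|y_i(y)|\le 1+\max_{0\le j<d_i}|b_{i,j}(x)|$. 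As each $b_{i,j}$ is continuous, hence bounded on the compact set $K$, the coordinates $y_i(y)$ are uniformly bounded over $\pi^{-1}(K)$. Therefore $\pi^{-1}(K)$ is closed and bounded in $\R^m$, thus compact, and $\pi$ is proper.

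Finally, closedness follows from properness by the standard topological fact that a continuous proper map from a Hausdorff space into a locally compact Hausdorff space is closed: here $Y$ is Hausdorff and $X\subset\R^n$ is locally compact Hausdorff, so $\pi$ is closed for the Euclidean topology. I expect the only genuine content to lie in the boundedness estimate of the middle paragraph, where integrality of the coordinate functions is converted into a uniform bound on fibers; the injectivity and the passage from properness to closedness are routine.
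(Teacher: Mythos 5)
Your proof is correct, but it takes a genuinely different route from the paper's for the properness/closedness part (the injectivity argument is the same one-line observation in both). The paper works through the real spectrum: by Andradas--Br\"ocker--Ruiz, an integral ring extension induces a closed map $\Sp_r\Pol(Y)\to\Sp_r\Pol(X)$ for the real spectrum topology; the correspondence between closed constructible subsets of the real spectrum and closed semi-algebraic subsets then shows that $\pi$ maps closed semi-algebraic sets to closed semi-algebraic sets, and a standard o-minimality argument (van den Dries) upgrades this to properness and closedness for the Euclidean topology. You instead argue directly and elementarily: each coordinate function $y_i$ of $Y$ is integral over $\Pol(X)$, so the Cauchy root bound $|y_i(y)|\le 1+\max_j|b_{i,j}(\pi(y))|$ gives a uniform bound on $\pi^{-1}(K)$ for $K$ compact, whence $\pi^{-1}(K)$ is closed and bounded in $\R^m$, hence compact; closedness then follows since a proper continuous map into the locally compact Hausdorff space $X$ is closed. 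Your argument is self-contained, avoids the real-spectrum machinery entirely, and uses only finiteness (not birationality), which is all that is needed. What the paper's route buys in exchange is the intermediate statement that $\pi$ is closed at the level of \emph{semi-algebraic} sets and of the real spectrum, which is of a piece with the real-spectrum formalism (specializations, central loci, the going-up property of Proposition \ref{GoingupRealSpectrum}) that the rest of the paper relies on; your Euclidean-only statement is exactly what the lemma asserts but does not by itself yield that constructible refinement.
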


\begin{proof}
The ring morphism $\Pol(X)\rightarrow \Pol(Y)$ is injective since
$\pi$ is birational and thus dominant.

Assume moreover that $\pi$ is a finite birational map. We show that the map $\pi$ is closed and proper with respect to the real spectrum topology, then with respect to
the semi-algebraic topology and finally with respect to the Euclidean topology.
By \cite[Prop 4.2-4.3]{ABR}, the induced map
$\Sp_r\Pol(Y)\rightarrow \Sp_r\Pol(X)$ is closed for the real
spectrum topology. According to \cite[Theorem 7.2.3]{BCR}, there is a 
bijective correspondence between open (resp. closed) semi-algebraic subsets of 
$X$ (resp. $Y$) and open (resp. closed) constructible subsets of the real spectrum $\Sp_r \Pol(X)$
(resp. the real spectrum $\Sp_r \Pol(Y)$). It follows that the image
by $\pi$ of every closed semi-algebraic subset of $Y$ is a closed
semi-algebraic subset of $X$. 
Now it is classical (\cite{vdd} for instance) to conclude 
that $\pi$ is closed for the Euclidean topology.

The morphism $\pi$ being finite, it has compact fibers
and hence is proper.
\end{proof}

We will often consider intermediate algebraic sets between a given algebraic set and its normalization, in the sense of the next statement. 
Before recall that, given 
a ring extension $A_1\rightarrow A_2$ (an injective ring
homomorphism), the ring $A$ is said to be an 
intermediate ring between $A_1$ and $A_2$ if  one has a factorization $A_1\rightarrow A\rightarrow A_2$ 
where the morphisms are extension rings. 

\begin{prop}\label{intermediatering}
Let $X$ be an algebraic set and $\pi:Y\rightarrow X$ be a finite birational map between
algebraic sets. Let $A$ be an intermediate ring
between $\Pol(X)$ and $\Pol(Y)$. There exists a unique algebraic set $Z$ such that
$A=\Pol(Z)$. Moreover the induced maps $Y\rightarrow Z$ and
$Z\rightarrow X$ are finite and birational.
\end{prop}

When the conditions of Proposition \ref{intermediatering} are satisfied, we say that $Z$ is an
intermediate algebraic set between $X$ and $Y$.

\begin{proof}
Note that $A$ is integral over $\Pol(X)$ since $\Pol(X)\hookrightarrow \Pol(Y)$ is an integral morphism. Moreover $\Pol(X)\subset A\subset
\K(X)$ since $\pi$ is birational and $A$ is integral over $\Pol(X)$
therefore $A\subset \Pol(X')$. In particular $A$ has $\K(X)$ as total
ring of fractions \cite[Lem. 2.8]{FMQ-futur}.
First note that $\Pol(X')$ is a finite $\Pol(X)$-module (this property is known to be satisfied for the so-called Mori rings, a sup-class of finite-type $\R$-algebras). 
Since $A$
is integral over $\Pol(X)$ then $A$ is a submodule of the Noetherian
$\Pol(X)$-module $\Pol(X')$ so that $A$ is
a finite $\Pol(X)$-module. As a consequence $A$ is a finitely
generated $\R$-algebra, so that there exists an ideal $I$ of some
$\R[X_1,\ldots,X_m]$ such that $A$ is isomorphic to
$\R[X_1,\ldots,X_m]/I$. Note that $I$ is a real ideal since the total
ring of fractions of $A$ is $\K(X)$, so that $A$ is the ring of polynomial
functions of an algebraic set $Z$ by the real Nullstellensatz (see \ref{sectiongeoalg}). The induced maps  $Y\rightarrow Z$ and
$Z\rightarrow X$ are clearly finite by the above arguments.
\end{proof} 

As already mentioned a finite birational map is not necessarily surjective in general. This is the motivation to restrict our attention to the central loci. 

\begin{prop}
\label{normalizationcentral1bis}
Let $\pi:Y\rightarrow X$ be a finite birational map between
algebraic sets. Then
\begin{enumerate}
\item $\pi:\Cent Y\rightarrow\Cent X$ is well defined and surjective.
\item $\pi:\Cent Y\rightarrow\Cent X$ is a quotient map for the
  Euclidean topology.
\item The
composition by $\pi$ gives an isomorphism between the ring $\SR(\Cent X)$ and the subring of functions in 
$\SR(\Cent Y)$ 
that are constant on the fibers of $\pi:\Cent Y\rightarrow\Cent X$.
\end{enumerate}
\end{prop}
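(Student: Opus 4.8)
The plan is to reduce 1) and 2) to results already established and to locate the real content in 3). For 1) I would translate geometric centrality into algebra via the identification $\RCent\Pol(X)\cap\Max\Pol(X)=\Cent X$ (and likewise for $Y$). A point $y\in\Cent Y$ is a maximal ideal $\m_y\in\RCent\Pol(Y)\cap\Max\Pol(Y)$, and for the ring map $\phi:\Pol(X)\to\Pol(Y)$ induced by $\pi$ one has $\psi_R(\m_y)=\m_{\pi(y)}$. Proposition \ref{finitebirationalcentral}(1) puts this in $\RCent\Pol(X)$, while integrality of $\phi$ (Lemma \ref{closedeucl}) makes the contracted ideal maximal; hence $\m_{\pi(y)}\in\RCent\Pol(X)\cap\Max\Pol(X)=\Cent X$ and $\pi(\Cent Y)\subseteq\Cent X$, giving well-definedness. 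Surjectivity is then exactly the central lying over of Proposition \ref{normalizationcentral2}: for $x\in\Cent X$ there is $\q\in\RCent\Pol(Y)$ contracting to $\m_x$, and integrality forces $\q=\m_y$ maximal with $y\in\Cent Y$ and $\pi(y)=x$.

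For 2) I would invoke the fact that a continuous closed surjection is a quotient map. By Lemma \ref{closedeucl} the map $\pi:Y\to X$ is closed for the Euclidean topology. Since $\Cent Y$ is Euclidean-closed in $Y$ and $\pi(\Cent Y)=\Cent X$, a closed subset of $\Cent Y$ is closed in $Y$, so its image is closed in $X$ and contained in $\Cent X$, hence closed in $\Cent X$; thus $\pi:\Cent Y\to\Cent X$ is closed, and being continuous and surjective by 1) it is a quotient map.

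For 3) I would first check that $\pi^{*}:\SR_0(\Cent X)\to\SR_0(\Cent Y)$, $f\mapsto f\circ\pi$, is well defined and injective. Birationality gives $\K(X)\simeq\K(Y)$, so $f\circ\pi$ is rational, it is continuous on $\Cent Y$ because $\pi:\Cent Y\to\Cent X$ is continuous, and it is plainly constant on the fibers of $\pi$. If $f\circ\pi\equiv 0$ on $\Cent Y$, then $f\equiv 0$ on $\Cent X$ by surjectivity of 1), so $f=0$ by injectivity of $\SR_0(\Cent X)\hookrightarrow\K(X)$.

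The crux, and the step I expect to be the main obstacle, is surjectivity onto the fiber-constant subring. Given a fiber-constant $g\in\SR_0(\Cent Y)$, I would define $f:\Cent X\to\R$ by $f(x)=g(y)$ for any $y\in\pi^{-1}(x)\cap\Cent Y$; this is unambiguous ($g$ is constant on fibers), defined everywhere on $\Cent X$ (surjectivity), and satisfies $f\circ\pi=g$, so continuity of $f$ follows from the quotient property of 2). For rationality, transport $g$ through the isomorphism $\K(Y)\simeq\K(X)$ to get $\tilde f\in\K(X)$ with $\tilde f\circ\pi=g$ as rational functions; on a Zariski-dense open $U\subseteq X$ over which $\pi$ restricts to an isomorphism and both sides are regular, $f(x)=g(\pi^{-1}(x))=\tilde f(x)$ for $x\in U\cap\Cent X$. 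Hence $f$ is continuous on $\Cent X$ and coincides with the rational function $\tilde f$ on the Zariski-dense open $U\cap\Cent X$, so $f\in\SR_0(\Cent X)$ with $\pi^{*}(f)=g$. The delicate point is that the descended function must be produced simultaneously as a topological object (continuous, via the quotient structure of 2)) and as an algebraic object (rational, via birationality), and the two descriptions must be reconciled on a dense set; part 2) is precisely what powers the continuity half of this argument.
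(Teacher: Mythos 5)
Your proposal is correct and follows essentially the same route as the paper: part 1) via the central lying-over of Proposition \ref{finitebirationalcentral}, part 2) from the closedness in Lemma \ref{closedeucl} (a continuous closed surjection is a quotient map), and part 3) by combining the quotient property with birationality. The only difference is one of detail: the paper disposes of 3) in a single sentence, whereas you carry out the descent argument explicitly (continuity from the quotient structure, rationality from $\K(Y)\simeq\K(X)$, reconciled on a dense open set), which is exactly the intended argument.
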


\begin{proof}
The fact that $\pi$ maps surjectively $\Cent Y$ onto $\Cent X$ is given by Proposition \ref{finitebirationalcentral}.

By (1) and Lemma \ref{closedeucl} the map $\pi$ is continuous,
surjective and a closed map for the Euclidean topology; this gives
(2). It means that a map is continuous on $\Cent X$ if and only if the
composition by $\pi$ is continuous on $\Cent Y$. Consequently, for a
continuous function $f:\Cent Y\to \R$ constant on the fibers of $\pi$
then there exists a (unique) continuous function $g:\Cent X\to \R$ such that $f=g\circ\pi$.
Using moreover that $\pi$ is birational, we get (3).
\end{proof}

Note that we cannot replace continuous rational functions by continuous hereditarily rational functions in the third point as illustrated by Koll\'ar surface. Note moreover that if $X$ is central, this point says that the
composition by $\pi$ gives a injective ring morphism $\pi_0:\SR(X)\hookrightarrow \SR(Y)$ whose image is the subring of
functions in 
$\SR(Y)$ that are constant on the fibers of $\pi$. 

\vskip 2mm

Given an algebraic set $X$, we investigate now the action of adding to $\Pol(X)$ a continuous rational function $f$ which is integral over $\Pol(X)$. Forgetting first about the continuity, this is done as follows.
The canonical morphism $\Pol(X)\rightarrow \K(X)$ factorizes through the morphism $\phi:\Pol(X)[t]\rightarrow\K(X)$ 
defined by $t\mapsto f$, inducing a morphism $\Pol(X)[t]/\Ker \phi\rightarrow\K(X)$.
Since $f$ is integral, the ring homomorphism $\Pol(X)\rightarrow \Pol(X)[t]/\Ker \phi$ is finite and  
$\Pol(X)[t]/\Ker \phi$ is the coordinate ring $\Pol(Y)$ of an algebraic set $Y$. 
In this setting, $f$ corresponds to the new variable $t$.

Taking into account the continuity, we obtain the following. 

\begin{prop}
\label{comparaisonfonction}
Let $X$ be an algebraic set. Let $f$ be a rational
function on $X$ that is integral on $\Pol(X)$ and assume that $f\in\SR(\Cent X)$. 
Denote by $Y$ the algebraic
set such that $\Pol(Y)=\Pol(X)[f]$, $t$ the polynomial function in $\Pol(Y)$ that corresponds to $f$
and $\pi:Y\to X$ the associated
finite birational map. Then the continuous rational function $f\circ\pi$ coincides with the polynomial function $t$ on $\Cent Y$. 
\end{prop}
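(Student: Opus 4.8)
The plan is to exhibit $t$ and $f\circ\pi$ as two continuous functions on $\Cent Y$ that coincide on a Euclidean-dense subset, and then conclude by continuity. First I would record that both sides genuinely define continuous real-valued functions on $\Cent Y$. The function $t\in\Pol(Y)$ is polynomial, hence continuous on all of $Y$. For the composite, Proposition \ref{normalizationcentral1bis} tells us that $\pi$ restricts to a well-defined (and obviously continuous, being polynomial) map $\Cent Y\to\Cent X$; since $f\in\SR_0(\Cent X)$ is continuous on $\Cent X$, the function $f\circ\pi$ is well-defined and continuous on $\Cent Y$. This is precisely where the inclusion $\pi(\Cent Y)\subseteq\Cent X$ is essential, as $f$ need not admit a continuous extension outside $\Cent X$.

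Next I would pin down the dense open set on which the two functions agree by construction. By definition of $\Pol(Y)=\Pol(X)[f]$, the variable $t$ is the image of $f\in\K(X)$ under the isomorphism $\K(X)\simeq\K(Y)$ induced by the birational map $\pi$; that is, $t$ is the rational function $f\circ\pi$ on $Y$. Choosing $U\subseteq X$ to be the intersection of $\dom(f)$ with the Zariski-open locus over which $\pi$ is an isomorphism, and setting $V=\pi^{-1}(U)$, I get that $\pi\colon V\to U$ is an isomorphism and $f$ is regular on $U$, whence the equality $t=f\circ\pi$ holds as regular functions on $V$. In particular $t$ and $f\circ\pi$ agree pointwise on $V\cap\Cent Y$.

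The key remaining step, which I expect to be the main (if modest) obstacle, is to verify that $V\cap\Cent Y$ is dense in $\Cent Y$ for the Euclidean topology. Its relative complement $\Cent Y\setminus V$ is contained in $Y\setminus V$, a proper Zariski-closed subset, hence a semialgebraic set of dimension strictly less than $\dim Y$. On the other hand, by Remark \ref{centraledim} every point of $\Cent Y$ has local semialgebraic dimension equal to $\dim Y$, so a subset of dimension $<\dim Y$ is nowhere dense in $\Cent Y$; therefore $V\cap\Cent Y$ is Euclidean-dense in $\Cent Y$. Finally, $t$ and $f\circ\pi$ are continuous on $\Cent Y$ and coincide on the dense subset $V\cap\Cent Y$, so they coincide on all of $\Cent Y$, which is the desired equality $f\circ\pi=t$ on $\Cent Y$.
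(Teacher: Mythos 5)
Your proof is correct and follows essentially the same route as the paper: identify $t$ with $f\circ\pi$ on a dense Zariski-open subset via a regular presentation of $f$, and conclude by continuity and Euclidean density in $\Cent Y$. The paper compresses the density step into one sentence, whereas you justify it explicitly via the maximality of the local semialgebraic dimension on $\Cent Y$ (Remark \ref{centraledim}), which is exactly the intended argument.
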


\begin{proof}
The function $t$ is a polynomial extension to $Y$ of
$f_{|U}\circ\pi_{|\pi^{-1}(U)}$ where $(f_{|U},U)$ is a regular
presentation of $f$. It follows that $f\circ\pi=t$ on a 
Zariski-dense open subset of $Y$ and we conclude by density with respect to Euclidean topology that both functions coincide on $\Cent Y$. 
\end{proof}

Note that in general $f\circ\pi$ does not coincide with $t$ on the
whole of $Y$ even if $f$ is continuous rational on the whole $X$. Consider for instance the curve $X$ given by
$\Z(y^4-x(x^2+y^2))$ as in Example \ref{grospoint2}. The rational function $f=y^2/x$ satisfies the integral equation $f^2-f-x=0$
and $\Pol(Y)=\Pol(X)[y^2/x]$.
Denoting $\pi:Y\to X$, the rational continuous function $f\circ \pi$ is not equal to $t$ on whole
$Y$ but only on $\Cent Y$. The subsets $\Cent Y$ and $\Cent X$ are in bijection but it is not the case for $X$ and $Y$.

\vskip 2mm

In general, the issue whether a given continuous rational function may be lifted to a polynomial one via a finite birational map is crucial in our discussion. Next lemma illustrates two situations that will be useful in the sequel.
 
\begin{lem}
\label{fondamental}
Let $X$ be an algebraic set with normalization
$\pi':X'\to X$. Let $Y$ be an algebraic set such that
there exist finite birational maps $\pi:Y\to X$ and $\varphi:X'\to Y$
satisfying $\pi'=\pi\circ\varphi$. Then
\begin{enumerate}
\item Let $f\in\Pol(Y)$ and $\tilde{f}\in\SR(\Cent X)$. Then $f=\tilde{f}\circ \pi$ on
  $\Cent Y$ if and only if $f\circ\varphi=\tilde{f}\circ\pi'$ on
  $\Cent X'$.
\item Assume $Y$ is central. Let $f\in\Pol(Y)$ and $\tilde{f}\in\SR(X)$.
Then $f=\tilde{f}\circ \pi$ on
  $Y$ if and only if $f\circ\varphi=\tilde{f}\circ\pi'$ on
  $X'$.
\end{enumerate}
\end{lem}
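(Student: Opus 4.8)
The plan is to prove the three statements by exploiting density: the key observation is that all the maps involved are finite birational, hence restrict to isomorphisms over a common dense Zariski open subset of $X$, and that $\SR_0$ functions are determined by their values on dense subsets of the central locus. First I would fix a dense Zariski open $U\subset X$ over which $\pi'$, $\pi$ and $\varphi$ are all isomorphisms, so that over the corresponding open sets all three varieties are identified. On such an open set the identity $f=\tilde f\circ\pi$ transports directly to $f\circ\varphi=\tilde f\circ\pi'$, since $\varphi$ sends the open part of $X'$ isomorphically to the open part of $Y$ and $\pi'=\pi\circ\varphi$. The heart of each item is then to upgrade such an equality-on-a-dense-open to equality on the relevant closed set, using continuity.

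For item 1), the plan is to observe that $f\circ\varphi$ is a polynomial (hence continuous) function on $X'$, while $\tilde f\circ\pi'$ is a continuous rational function on $X'$ because $\tilde f\in\SR_0(X)$ is continuous, $\pi'$ is continuous, and $\tilde f$ is rational on a dense open of $X$; note here that $X'$ is normal, hence central, so $\tilde f\circ\pi'$ is genuinely continuous on all of $X'$. Both functions agree on the dense open $\pi'^{-1}(U)$ of $X'$, and since $X'$ is irreducible this open is Euclidean-dense in $X'=\Cent X'$; two continuous functions agreeing on a dense subset of $X'$ coincide everywhere, giving $f\circ\varphi=\tilde f\circ\pi'$ on $X'$.

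For item 2), I would argue the forward direction first: if $f=\tilde f\circ\pi$ on $\Cent Y$, then in particular the equality holds on the dense open $\pi^{-1}(U)\cap\Cent Y$, so $f\circ\varphi$ and $\tilde f\circ\pi'$ are two continuous functions on $X'=\Cent X'$ agreeing on a dense subset, hence equal on $\Cent X'$ by continuity. For the converse, I would run the same density argument in the other direction, transporting the equality on $\Cent X'$ back along $\varphi$; here I would use that $\varphi:\Cent X'\to\Cent Y$ is surjective by Proposition \ref{normalizationcentral1bis}(1), so that every point of $\Cent Y$ is hit, together with the fact that $f$ and $\tilde f\circ\pi$ are continuous on $\Cent Y$ and already agree on the dense open part coming from $U$. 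The main subtlety to watch is the continuity of $\tilde f\circ\pi'$ on $\Cent X'$ (equivalently on all of $X'$), which again follows from $X'$ being central, so that $\tilde f$, being continuous on $\Cent X=X$ in the central sense, pulls back to a continuous function.

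For item 3), with the extra hypothesis that $Y$ is central, the plan is identical but now over all of $Y$ rather than only $\Cent Y$: since $Y=\Cent Y$, the statement of item 2) already reads as equality on all of $Y$ versus equality on $\Cent X'$, and I would simply note that $\Cent X'=X'$ because $X'$ is normal hence central. Thus item 3) is the specialization of the argument of item 2) to the situation where both $Y$ and $X'$ are central, and the equality propagates from the dense open part to the whole of $Y$ and the whole of $X'$ by the same continuity-and-density reasoning. The main obstacle throughout is not the algebra but making sure the relevant functions are honestly continuous on the closed sets where we want to conclude — which is exactly where centrality of $X'$ and the hypothesis $\tilde f\in\SR_0(X)$ enter — after which equality on a dense open forces equality everywhere by continuity on an irreducible (hence Euclidean-connected, Euclidean-dense-open-containing) variety.
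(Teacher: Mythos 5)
There is a genuine gap, and it sits at the center of your argument: you assert twice that ``$X'$ is normal, hence central,'' and this is false. The paper's own Example \ref{grospoint2}(2) exhibits a central surface $S=\Z((y^2+z^2)^2-x(x^2+y^2+z^2))$ whose normalization is a normal surface with an isolated point, hence \emph{not} central --- this pathology is precisely the reason the whole theory is set up relative to the central locus. Once $X'\neq\Cent X'$, your density mechanism breaks down: a Zariski-dense open subset of $X'$ is not Euclidean-dense in $X'$ (it misses isolated points, and more generally all of $X'\setminus\Cent X'$), so agreement of two continuous functions on such an open set only propagates to $\Cent X'$, never to all of $X'$. This is fatal for items 1) and 3), whose conclusions are asserted on the whole of $X'$. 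Your argument for item 2) survives, because there the conclusion only concerns $\Cent X'$ and $\Cent Y$, where Zariski-dense opens contained in the regular loci are indeed Euclidean-dense, and because for the converse you invoke the surjectivity of $\varphi:\Cent X'\to\Cent Y$ from Proposition \ref{normalizationcentral1bis}, which is the correct tool.

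The deeper issue is that the density machinery is not what the lemma runs on. Since $\pi'=\pi\circ\varphi$ as maps defined everywhere, item 1) is pure composition: for any $x'\in X'$ one has $f(\varphi(x'))=\tilde f(\pi(\varphi(x')))=\tilde f(\pi'(x'))$, using the hypothesis at the point $\varphi(x')\in Y$ --- no continuity, no density, and it reaches the non-central points of $X'$ for free. The forward implications of 2) and 3) are the same computation (for 2) one needs $\varphi(\Cent X')\subset\Cent Y$, which is Proposition \ref{normalizationcentral1bis}(1)), and the converses follow from surjectivity: of $\Cent X'\to\Cent Y$ in general, and of $X'\to Y$ when $Y$ is central. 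That is exactly the paper's two-line proof. To repair your write-up you should replace the continuity-and-density extensions in 1) and 3) by these direct composition and surjectivity arguments; as written, your proof cannot establish the equalities at any point of $X'\setminus\Cent X'$.
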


\begin{proof}
The proof follows from the surjectivity of the maps $\Cent X'\to\Cent X$, $\Cent X'\to\Cent Y$,
$\Cent Y\to\Cent X$, and also $X'\to Y$ in the case $Y$ is central,
given by Proposition \ref{normalizationcentral1bis}.
\end{proof}

\subsection{Finite birational bijection on the central locus}

The normalization process allows to simplify the singularities of an
algebraic set $X$ by adding integral rational functions to the
polynomial ring of $X$. Requiring an additional continuity of the
integral functions leads to simplify the singularities without loosing
some bijectivity property with some part of $X$.




This is the main result of this section.

\begin{thm}
\label{bijfinitebiratbis}
Let $\pi:Y\rightarrow X$ be a finite birational map between
algebraic sets, and denote by $\pi_{|\Cent Y}:\Cent Y\rightarrow\Cent X$ its restriction to the central loci. 
The following properties are equivalent:
\begin{enumerate}
\item[(i)] $\pi_{|\Cent Y}$ is a bijection.
  \item[(ii)] The ring morphism
  $\pi_0:\SR(\Cent X)\rightarrow \SR(\Cent Y)$ is an isomorphism.
  \item[(iii)] 
 For all $g\in\Pol(Y)$ there exists
   $f$ in $\SR(\Cent X)$
   such
   that $g=f\circ\pi$ on $\Cent Y$.
\item[(iv)] $\pi_{|\Cent Y}$ is an homeomorphism for the Euclidean topology.
\item[(v)] The morphism $\Pol(X)\rightarrow \Pol(Y)$ is centrally
  weakly subintegral.
\item[(vi)] The rational morphism $\pi^{-1}$ admits a (rational) continuous extension to $\Cent X$.
\end{enumerate}
\end{thm} 

Note that in (vi), the restriction of $\pi^{-1}$ to $\Cent X$ is not defined entirely on $\Cent X$, but only on a subset dense in $\Cent X$ with respect to Euclidean topology. So that (vi) means precisely that there exists a continuous map on $\Cent X$ which admits $\pi^{-1}$ as a rational model.

\begin{proof} Let's prove first the equivalence between (i) and
  (ii). The fact that (i) implies (ii) is a direct consequence of 3)
  of Proposition \ref{normalizationcentral1bis}. To prove the converse implication, assume that $\pi_0:\SR(\Cent X)\rightarrow \SR(\Cent Y)$ is an isomorphism whereas $\pi_{|\Cent Y}$ is
not bijective. There exists $x\in \Cent X$ such that we have $\{ y_1,y_2\}\subset
\pi^{-1}(x) \cap \Cent Y$ and $y_1\not=y_2$. There exists $p\in \Pol(Y)$
such that $p(y_1)\not=p(y_2)$. By 3) of Proposition
\ref{normalizationcentral1bis}, we get that $p\in\SR(\Cent Y)\setminus\pi_0(\SR(\Cent X))$
since $p$ is not constant on the fibers of $\pi_{|\Cent Y}$, a contradiction. We
have proved that (ii) implies (i).

Since (ii) implies clearly (iii) then (i) implies (iii).
To show that (iii) implies (i) it suffices to see that for any $y_1,y_2$ in $\Cent Y$ such that $\pi(y_1)=\pi(y_2)=x$
and for any $g\in \Pol(Y)$ such that $g(y_1)=0$, on has also $g(y_2)=0$.
Since there exists $f$ in $\SR(\Cent X)$
   such
   that $g=f\circ\pi$ on $\Cent Y$, on gets the result.

Note that (iv) implies trivially (i), whereas (i) implies (iv) since $\pi$ is closed with respect to Euclidean topology by Lemma \ref{closedeucl}.

We clearly have that (v) and (i) are equivalent since $\RCent
\Pol(X)\cap\Max \Pol(X)=\Cent X$ and $\RCent
\Pol(Y)\cap\Max \Pol(Y)=\Cent Y$.

Note that (vi) implies (i) since the rational inverse $\pi^{-1}$ admits
a continuous extension along $\Cent X$, which is indeed an inverse for
$\pi_{|\Cent Y}$ by continuity with respect to Euclidean topology. To show the converse, we need to prove
that $\pi_{|\Cent Y}^{-1}$ is a rational continuous map. Consider $Y\subset \R^n$ and choose a coordinate
function $y_i$ on $Y$ for $i \in \{1,\ldots,n\}$. We want to prove
that the rational function $z_i=y_i \circ \pi_{|\Cent Y}^{-1}$ is continuous
on $\Cent X$. However $z_i\circ \pi_{|\Cent Y}$ is polynomial on
$\Cent Y$, 
so that, by (ii), $z_i$ belongs to $\SR(\Cent X)$ and thus
$\pi_{|\Cent Y}^{-1}:\Cent X \to \Cent Y$ is a rational
continuous map. 
\end{proof}

The centrality of an algebraic set is not preserved under finite birational
maps in general. However it will be the case if we assume moreover the map to be bijective. 
The following result enumerates the properties of a bijective finite
birational map onto a central algebraic set. 
\begin{prop}
\label{bijfinitebirat}
Let $\pi:Y\rightarrow X$ be a finite birational map between
algebraic sets where $X$ is
central. Let us assume that $\pi$ is a bijection. Then, one has the following properties:
\begin{enumerate}
\item $Y$ is central. 
\item The canonical morphism $\SR(X)\rightarrow \SR(Y)$ is an isomorphism.
\item $\pi$ is an homeomorphism for the constructible topology.
\item The morphism $\Pol(X)\rightarrow \Pol(Y)$ is centrally weakly subintegral.
\item $\pi^{-1}$ is rational continuous.
\end{enumerate}
\end{prop}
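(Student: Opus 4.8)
The statement collects five consequences of the bijectivity of $\pi$ over a central base. The plan is to derive (1), (2), (4) and (5) directly from the material already established, and to isolate (3) as the genuinely new point requiring the real spectrum. The starting observation is (1): by Proposition \ref{normalizationcentral1bis}(1) the restriction $\pi\colon\Cent Y\to\Cent X$ is surjective, so $\pi(\Cent Y)=\Cent X=X$ since $X$ is central. As $\pi$ is a bijection of $Y$ onto $X$, a proper subset of $Y$ cannot map onto all of $X$, which forces $\Cent Y=Y$; thus $Y$ is central. In particular $\SR_0(\Cent X)=\SR_0(X)$ and $\SR_0(\Cent Y)=\SR_0(Y)$, and the restriction $\pi_{|\Cent Y}$ coincides with the bijection $\pi$.

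This places us exactly in the situation of Proposition \ref{bijfinitebiratbis}, whose condition (i) now holds. Its equivalent conditions then yield the remaining easy points. Condition (ii) gives (2), namely that $\pi_0\colon\SR_0(X)\to\SR_0(Y)$ is an isomorphism, and condition (v) gives (4), that $\Pol(X)\to\Pol(Y)$ is centrally weakly subintegral. For (5), I would use condition (iii): writing $Y\subset\R^m$ with coordinate functions $t_1,\dots,t_m\in\Pol(Y)$, for each $i$ there is $f_i\in\SR_0(\Cent X)=\SR_0(X)$ with $t_i=f_i\circ\pi$ on $\Cent Y=Y$. Since $\pi$ is bijective, $f_i=t_i\circ\pi^{-1}$, so each component of $\pi^{-1}$ is a rational continuous function and hence $\pi^{-1}$ is a rational continuous map.

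The only substantial point is (3), which I expect to be the main obstacle. Here I would pass to the induced map on real spectra $\psi_r\colon\Sp_r\Pol(Y)\to\Sp_r\Pol(X)$. This map is continuous and, by Proposition \ref{GoingupRealSpectrum} (the extension $\Pol(X)\to\Pol(Y)$ being integral), a closed map. Through the correspondence between semi-algebraic subsets and constructible subsets of the real spectrum (\cite[Thm. 7.2.3]{BCR}), a homeomorphism for the constructible topology will follow as soon as $\psi_r$ is shown to be a bijection, a closed continuous bijection of spectral spaces being automatically a homeomorphism (and hence also for the constructible topology). Surjectivity is cheap: $\psi_r$ is closed and its image contains the points over the generic point $(0)$, which are dense because the extension is birational, so the image is both dense and closed, hence everything.

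The delicate part is the injectivity of $\psi_r$ on the \emph{whole} real spectrum, not merely on the geometric maximal points, where it is already the bijection $\pi$. I would handle it by combining the real going-up and the closedness of $\psi_r$ from Proposition \ref{GoingupRealSpectrum} with the bijectivity of $\pi$ on the central loci $\Cent Y\to\Cent X$: given $\beta_1\neq\beta_2$ over a common $\alpha$, one tracks their specializations to central geometric points lying over a single point of $\Cent X$ and derives a contradiction with the injectivity of $\pi_{|\Cent Y}$ established above. (Alternatively, since $Y$ is central by (1), one may argue more directly on geometric points: $\pi$ is a semi-algebraic bijection, its inverse $\pi^{-1}$ is semi-algebraic by (5), and both carry semi-algebraic sets to semi-algebraic sets, so $\pi$ induces mutually inverse bijections on constructible sets.) Either route reduces (3) to controlling how the central bijection propagates along specializations, which is where the heart of the argument lies.
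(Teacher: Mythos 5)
Points 1), 2), 4) and 5) are handled exactly as in the paper: centrality of $Y$ from the surjectivity of $\pi_{|\Cent Y}$ plus bijectivity, then 2) and 4) from Proposition \ref{bijfinitebiratbis}, and 5) by pulling the coordinate functions of $Y$ back through the isomorphism $\SR_0(X)\simeq\SR_0(Y)$. That part of your proposal is correct and matches the paper's argument.

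The gap is in 3), and it is precisely the point you flagged as delicate. The ``constructible topology'' here is the topology on the \emph{real points} of $X$ and $Y$ whose closed sets are the closed Zariski-constructible subsets (the arc-symmetric topology of Kurdyka--Parusi\'nski), not the patch topology on the real spectrum. Your real-spectrum route therefore aims at the wrong object: under the correspondence of \cite[Thm. 7.2.3]{BCR}, constructible subsets of $\Sp_r\Pol(X)$ correspond to \emph{all} semi-algebraic subsets of $X$, a much larger lattice than the Zariski-constructible ones, so even a completed bijectivity statement for $\psi_r$ would not give what is claimed. Your parenthetical alternative has the same defect: knowing that $\pi$ and $\pi^{-1}$ are semi-algebraic bijections preserving semi-algebraic sets says nothing about whether the image of a Zariski-constructible closed set is again Zariski-constructible --- that is a genuine theorem, not a formal consequence of bijectivity. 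In addition, the injectivity of $\psi_r$ on the whole real spectrum, which carries all the weight in your first route, is only sketched (``one tracks their specializations\dots and derives a contradiction''); two distinct orderings over a common $\alpha$ may well specialize to the same geometric point, and for non-maximal supports the residue field extension $k(\p)\to k(\q)$ can be nontrivial, so the contradiction with injectivity of $\pi_{|\Cent Y}$ does not come for free. (Note that the paper's Remark \ref{rem-weak}(1) deduces uniqueness of real primes \emph{from} property 3), not the other way round.)

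The paper's proof of 3) is short and goes the other way: by Lemma \ref{closedeucl} the finite map $\pi$ is proper and closed for the Euclidean topology, and then \cite[Cor. 4.9]{KP} gives that the image under such a map of a closed Zariski-constructible subset of $Y$ is a closed Zariski-constructible subset of $X$; a continuous closed bijection for that topology is a homeomorphism. The external input you are missing is exactly this Kurdyka--Parusi\'nski stability result for images of constructible closed sets under proper regular maps.
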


\begin{proof} 
Since $X$ is assumed to be central, we know by Proposition
\ref{normalizationcentral1bis} that $\pi_{|\Cent Y}$ is surjective
onto $X$. In particular, if $\pi$ is assumed to be bijective, then $Y$ is automatically central. 

Note that (2), (4) and (5) are direct consequences of Theorem \ref{bijfinitebiratbis}.

Let us show (3). By Lemma \ref{closedeucl}, $\pi$ is closed for the Euclidean topology so that using
\cite[Cor. 4.9]{KP}, the image by $\pi$ of a Zariski constructible closed subset of
$Y$ is a Zariski constructible closed subset of $X$. It follows that $\pi$ is an
homeomorphism for the constructible topology.
\end{proof}

\begin{rem}
 Note that a bijective finite birational polynomial map onto a central
algebraic set
  is not necessarily an isomorphism while it is an isomorphism in the
  category of rational continuous maps by property 5). For instance, let $X$ be the
  cuspidal curve given by $y^2=x^3$ in $\R^2$, and $X'$ be its
  normalization. The normalization map $\pi:X'\rightarrow X$ is
  birational, finite and bijective. It is even an homeomorphism with respect to the Zariski topology (the curves are irreducible, so the Zariski subsets are just points). However $X$ is singular whereas $X'$ is smooth.
\end{rem}

    

\subsection{Finite hereditarily birational bijection on the central locus}
\label{heredfinitebirat}
In this subsection we restrict our attention to continuous hereditarily rational functions.

Recall that the restriction of a rational continuous functions does not remain rational in general.
This phenomenon appears also for birational maps.
Namely, let $\pi:Y\rightarrow X$ be a finite birational map between
algebraic sets. By Lemma \ref{closedeucl} the corresponding
morphism $\Pol(X)\rightarrow \Pol(Y)$ is injective and integral.
Let $W$ be an irreducible algebraic subset
of $Y$. There exists $\q\in \ReSp \Pol(Y)$ such that $W=\Z(\q)$. We
denote by $\p$ the real prime ideal $\q\cap \Pol(X)$ and by $V$ the
real irreducible algebraic subset of $X$ given by $\Z(\p)$. The
restriction of $\pi$ to $W$ gives clearly a map
$\pi_{|W}:W\rightarrow V$ which is finite since the corresponding
morphism of polynomial functions 
$$\dfrac{\Pol(X)}{\p}\rightarrow\dfrac{\Pol(Y)}{\q}$$
is integral. 
The residue field $k(\q)=\K(W)$ is an algebraic extension of $k(\p)=\K(V)$. 
Then, $\pi_{|W}$ remains birational if and only if $\pi$ induces an isomorphism $k({\p})\simeq
k({\q})$. 

\begin{ex}\label{K-surf}
Consider Koll\'ar surface
$X=\Z(x^3-y^3(1+z^2))$. Its normalization is given by
  $\Pol(X')=\Pol(X)[x/y]=\Pol(X)/(t^3-(1+z^2),yt-x)\simeq \R[t,y,z]/(t^3-(1+z^2))$, setting
  $t=x/y$. 
Let ${\p}=(x,y)\in \ReSp
\Pol(X)$ and let ${\q}\in\ReSp \Pol(X')$ be the unique real
prime ideal of $\Pol(X')$ such that ${\q}\cap
\Pol(X)={\p}$. We have $k({\p})=\R(z)$ and 
$k({\q})=\R(z)(^3\sqrt{1+z^2})\not\simeq k({\p})$. Here, the normalization map $\pi':X'\rightarrow
  X$ $(y,t,z)\mapsto (ty,y,z)$ is a bijective finite birational map
  which is not hereditarily birational in the sense that $\pi'$ is
  birational but $\pi'_{|\Z(\q)}:\Z(\q)\to\Z(\p)$ is not
  birational. The map $\pi'$ has an inverse bijection $\pi'^{-1}$
  given by $(x,y,z)\mapsto (y,x/y,z)$ if $y\not=0$ (the inverse of
  $\pi'$ as a birational map) and $(0,0,z)\mapsto
  (0,^3\sqrt{1+z^2},z)$. We see that $\pi'^{-1}$ is rational
  continuous but not hereditarily rational.
  Moreover, $\Pol(X)\hookrightarrow \Pol(X')$ 
is $w_c$-subintegral but not $s_c$-subintegral and one may also notice that 
$\Pol(X)\hookrightarrow \Pol(X)[x^2/y]$ is $s_c$-subintegral.
\end{ex}

This consideration leads us to the definition of hereditarily birational maps on the central locus, inspired by \cite{KN}.
\begin{defn}
\label{defrestriction}
Let $\pi:Y\rightarrow X$ be a birational map between
algebraic sets. We say that $\pi$ is hereditarily birational on the central locus of $Y$ if for every irreducible algebraic subset $W$ central in $Y$, 
the restriction $\pi_{|W}$ is birational with the Zariski closure of its image.
\end{defn}

Example \ref{K-surf} provides a finite birational bijection between central
algebraic sets such that the natural map $\RCent
\Pol(Y) \to \RCent \Pol(X)$ is a bijection, but which is not hereditarily birational.

Note moreover that a finite birational map $\pi:Y\rightarrow X$ such that $\Pol(X)\rightarrow \Pol(Y)$ is centrally weakly subintegral and hereditarily birational on the central locus of $Y$ does not necessarily give rise to a centrally subintegral extension :

\begin{ex}\label{Ex-primepasbij} Consider the surface $X=\Z(yz^4-(y^2-x^4)^4))$ in $\R^3$. The rational function $f=(\frac{y^2-x^4}{z})^2$ is integral over $\Pol(X)$ and admits a continuous extension to $\Cent X$ by $+\sqrt y$. The algebraic set obtained by adding $f$ to $\Pol(X)$ as in Proposition \ref{comparaisonfonction} is isomorphic to $Y=\Z(yz^2-(y^4-x^4)^2)\subset\R^3$. The restriction of the induced finite birational map $\pi :Y\to X$ to the central loci is a bijection, so that the extension $\Pol(X) \to \Pol(Y)$ is $w_c$-subintegral.

Consider $V\subset X$ the parabola defined by $z=0$ and $y=x^2$. Then $V\subset \Cent X$, and there exist two irreducible curves central in $Y$ and lying over $V$ : the lines $W_\pm$ given by $z=0$ and $y=\pm x$. In particular the extension $\Pol(X) \to \Pol(Y)$ is not centrally subintegral.
However both of these lines are isomorphic to $V$ via the restriction
of $\pi$, so that $\pi$ is hereditarily birational by restriction to
$W_+$ and $W_-$ and moreover $\pi$ is hereditarily birational on the
central locus of $Y$ since $W_+\cup W_-$ is the inverse image by $\pi$
of the indeterminacy locus of $f$.

Note moreover that just half of each line is included in $\Cent Y$,
and that the restriction of $\pi^{-1}$ to $V$ is given by the
semialgebraic function $x\mapsto (x,|x|,0)$. It shows that the inverse
bijection $(\pi_{|\Cent Y})^{-1}:\Cent X\to \Cent Y$ is rational
continuous but not hereditarily rational.
\end{ex}

However, we readily get :

\begin{lem}\label{lem-her} Let $\pi:Y\rightarrow X$ be a finite birational map between
algebraic sets. 
The morphism $\Pol(X)\rightarrow \Pol(Y)$ is
   centrally subintegral if and only if $\pi$ is hereditarily birational on the central locus of $Y$ and the map $\RCent
\Pol(Y) \to \RCent \Pol(X)$ is a bijection.
\end{lem}

We aim to characterize the central subintegrality of a finite
birational map by hereditary properties of the inverse birational
map. More precisely, we are going to consider birational maps of the form $\pi:Y\rightarrow X$ such that $\pi^{-1}$ is hereditarily rational on the center of $X$. This property is strictly stronger than $\pi^{-1}$ being only rational continuous as noticed in Examples \ref{K-surf} and \ref{Ex-primepasbij}.
Beware moreover that a birational map bijective between the central loci and hereditarily birational does not necessarily satisfy that $\pi^{-1}$ is hereditarily rational on the center of $X$ as illustrated by Example \ref{Ex-primepasbij}.

The subtle differences between hereditarily birational maps and rational maps satisfying that $\pi^{-1}$ is hereditarily rational is crucial in this section.

\begin{prop}
\label{prop-her}
Let $\pi:Y\rightarrow X$ be a finite birational map between
algebraic sets. 
Then $\pi^{-1}$ is hereditarily rational on the center of $X$ if and only if  $\pi$ is hereditarily birational on the central locus of $Y$ and $\RCent
\Pol(Y) \to \RCent \Pol(X)$ is a bijection.
\end{prop}

\begin{proof}
Assume $\pi^{-1}$ is hereditarily rational on the center of $X$. Let $W$ be an irreducible subset central in $Y$, and denote by $V\subset X$ the Zariski closure of $\pi(W)$. 
Note that $\pi_{|W}$ is still a finite polynomial map, and that $V$ is central in $X$. 

The restriction of $(\pi_{|\Cent Y})^{-1}$ to $V\cap \Cent X$
coincides on $V\cap \Cent X$ with the continuous extension of a
rational map $\phi$ on $V$ by assumption on $\pi$. In particular
$\phi$ is a regular inverse of $\pi_{|W}$ on a Zariski dense open subset
$V^o\subset V$ intersected with $\Cent X$. Since $V$ is central in
$X$, it implies that $\phi$ is a rational inverse for
$\pi_{|W}$. Therefore $\pi$ is hereditarily birational on $\Cent Y$.

For $V$ irreducible and central in $X$, let $W_i$ be the central
subsets of $Y$ lying over $V$ for $i=1,...,r$, all equipped with a
rational inverse $\phi_i :V \to W_i$ of $\pi_{|W_i}$ (we have already
shown that $\pi$ is hereditarily birational on $\Cent Y$). However
these inverses coincide with $(\pi_{|\Cent Y})^{-1}$ on a Zariski
dense open subset of $V$ intersected with $V\cap \Cent X$, so that they coincide with a common rational map $\phi$. In particular there exists a semialgebraic subset $S$ of $V\cap \Cent X$ of maximal dimension on which $\phi$ is injective, and $\phi(S)\subset W_i$ for $i=1,\ldots,r$. As a consequence $W_1=\cdots=W_r$ by centrality of the $W_i$'s in $Y$. We have proved that $\RCent
\Pol(Y) \to \RCent \Pol(X)$ is a bijection.

\vskip 2mm

Conversely, let $V$ be central in $X$ and denote by $W$ the unique central set in $Y$ lying over $V$. By assumption $\pi_{|W}:W\to V$ is birational, and we aim to prove that its inverse is a rational model for the restriction of $(\pi_{|\Cent Y})^{-1}$ to $V\cap \Cent X$.

By Lemma \ref{lemm1}, there exists $Z\subset V$ with $\dim Z<\dim V$ such that the inverse image by $\pi$ of $(V\setminus Z)\cap \Cent X$ is included in $W \cap \Cent Y$. In other terms
$$  (V\setminus Z)\cap \Cent X \subset \pi(W\cap \Cent Y),$$
so that the restriction to $V\cap \Cent X$ of the continuous map
$(\pi_{|\Cent Y})^{-1}$ coincides with the rational map
$(\pi_{|W})^{-1}$ on the intersection of $V\cap \Cent X$ with a
Zariski dense open subset of $V$. It means that $\pi^{-1}$ is hereditarily rational on the central locus of $X$.
\end{proof}

To emphasize a crucial point in the preceding proof, note that the very last argument fails in Example \ref{Ex-primepasbij} since for both $W$ over $V$, the image $\pi(W\cap \Cent Y)$ does not contains generically $V\cap \Cent X=V$, but only half of it.

\vskip 2mm

Note that a finite birational map $\pi:Y\rightarrow X$ induces a morphism $\pi^0:\SRR(\Cent X)\rightarrow \SRR(\Cent Y)$ given by composition
  with $\pi$. With this in mind, we characterize centrally subintegral extensions using hereditarily rational functions. This provides an analog of Theorem \ref{bijfinitebiratbis} in the hereditary context.

\begin{thm}\label{bijfinitebiratbis-reg} Let $\pi:Y\rightarrow X$ be a finite birational map between
algebraic sets. 
The following properties are equivalent:
\begin{enumerate}
 \item[(i)] The ring morphism
  $\pi^0:\SRR(\Cent X)\rightarrow \SRR(\Cent Y)$ is an isomorphism.
 \item[(ii)] 
 For all $g\in\Pol(Y)$ there exists an hereditarily rational function
   $f$ in $\SRR(\Cent X)$
   such
   that $g=f\circ\pi$ on $\Cent Y$.
 \item[(iii)] The rational map $\pi^{-1}$ is hereditarily rational on the center of $X$.
 \item[(iv)] The extension $\Pol(X)\to \Pol(Y)$ is centrally subintegral.
 \end{enumerate}
\end{thm}

\begin{proof} Note first that $(iii)$ and $(iv)$ are equivalent by Lemma \ref{lem-her} and Proposition \ref{prop-her}. 
It is immediate that $(i)$ implies $(ii)$ since $\Pol(Y) \subset \SRR(\Cent Y)$.

Assume $(ii)$. Note first that $\pi_{|\Cent Y}$ is bijective by
Theorem \ref{bijfinitebiratbis}. Assume moreover $Y\subset \R^n$ and
consider a coordinate function $y_i$ on $Y$. By assumption there
exists $f$ in $\SRR(\Cent X)$ such that $y_i=f\circ\pi$ on $\Cent
Y$. This implies that the function $y_i \circ (\pi_{|\Cent Y})^{-1}$
is an hereditarily rational function on $\Cent X$. In particular we have $(iii)$.

Assume $(iii)$. The proof of $(i)$ comes down to prove the surjectivity of $\pi^0$. So for $g\in \SRR(\Cent Y)$, we have to prove that $f\in \SR(\Cent X)$ given by $f=g \circ (\pi_{|\Cent Y})^{-1}$ is in fact in $\SRR(\Cent X)$. So let $V$ be central in $X$. Using Proposition \ref{prop-her}, there exists a unique central set $W$ in $Y$ lying over $V$, and $\pi_{|W}:W\to V$ is birational. The restriction of $g$ to $W$ is rational by assumption on $g$, so that the composition is again rational, giving $(i)$.
\end{proof}

In particular in the central case, we deduce from
Theorem \ref{bijfinitebiratbis-reg} an hereditary version of Proposition \ref{bijfinitebirat}.

\begin{prop}
\label{bijfinitebirat2}
Let $\pi:Y\rightarrow X$ be a finite birational map between
algebraic sets where $X$ is
central. Let us assume that $\pi$ is bijective and hereditarily birational. Then, one has the following properties:
\begin{enumerate}
 \item $Y$ is central. 
 \item The morphism $\SRR(X)\rightarrow \SRR(Y)$ $f\mapsto f\circ\pi$
   is an isomorphism.
 \item The extension $\Pol(X)\rightarrow \Pol(Y)$ is centrally subintegral.
 \item The rational map $\pi^{-1}$ is hereditarily rational.
\end{enumerate}
\end{prop}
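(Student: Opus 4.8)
The plan is to treat this result as the hereditarily rational counterpart of Proposition \ref{bijfinitebirat} and to deduce all four statements from the characterizations already at our disposal, chiefly the equivalences of Proposition \ref{bijfinitebiratbis-reg}. The one structural remark that makes everything fall into place is that, $X$ being central, we have $\Cent X=X$, and once the centrality of $Y$ is established we likewise have $\Cent Y=Y$; consequently the rings $\SR^0(\Cent X)$ and $\SR^0(\Cent Y)$ occurring in Proposition \ref{bijfinitebiratbis-reg} are exactly $\SR^0(X)$ and $\SR^0(Y)$, so the statements of that proposition can be read off directly.

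First I would dispose of 1): the centrality of $Y$ uses only the bijectivity of $\pi$ together with the centrality of $X$, which is precisely point 1) of Proposition \ref{bijfinitebirat}, so nothing new is required. Next I would check that the standing hypothesis yields statement (i) of Proposition \ref{bijfinitebiratbis-reg}: indeed $\pi_{|\Cent Y}\colon\Cent Y\to\Cent X$ is surjective by Proposition \ref{normalizationcentral1bis} and injective because $\pi$ is injective on $Y$, hence a bijection, and it is hereditarily birational by Definition \ref{defrestriction}. With (i) established, the equivalences of Proposition \ref{bijfinitebiratbis-reg} immediately give 2) and 3): statement (ii) asserts that $\pi^0\colon\SR^0(\Cent X)\to\SR^0(\Cent Y)$ is an isomorphism, which under the identifications $\Cent X=X$ and $\Cent Y=Y$ is exactly 2); and statement (iv), that $\Pol(X)\to\Pol(Y)$ is centrally subintegral, is exactly 3).

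The only point demanding a short argument of its own is 4), which follows the pattern of point 5) of Proposition \ref{bijfinitebirat} but uses the stronger isomorphism $\SR^0(X)\simeq\SR^0(Y)$ of point 2) in place of $\SR_0(X)\simeq\SR_0(Y)$. Embedding $Y\subset\R^n$ and writing $y_1,\dots,y_n$ for its coordinate functions, each $y_i$ is polynomial and hence lies in $\SR^0(Y)$. Since $\pi^0$ is an isomorphism there is $f_i\in\SR^0(X)$ with $f_i\circ\pi=y_i$; composing on the right with the inverse of the bijection $\pi$ gives $f_i=y_i\circ\pi^{-1}$. Thus the $i$-th component of $\pi^{-1}$ equals $f_i\in\SR^0(X)$, which shows that $\pi^{-1}$ is a hereditarily rational map.

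I do not anticipate a genuine obstacle, as the substantive work has already been carried out in Propositions \ref{bijfinitebirat} and \ref{bijfinitebiratbis-reg}. The single place to stay careful is in 4): one must invoke the isomorphism of \emph{hereditarily} rational rings, not merely of continuous rational rings, so that the components of the inverse map are seen to land in $\SR^0(X)$ rather than only in $\SR_0(X)$.
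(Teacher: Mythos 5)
Your proposal is correct and follows essentially the same route as the paper: the paper likewise obtains 1)--3) by citing Propositions \ref{bijfinitebirat} and \ref{bijfinitebiratbis-reg}, and proves 4) by exactly your argument, namely that each coordinate function $y_i$ of $Y$ pulls back under the isomorphism of 2) to an element $f_i=y_i\circ\pi^{-1}$ of $\SR^0(X)$. Your explicit verification of hypothesis (i) of Proposition \ref{bijfinitebiratbis-reg} and your closing caution about needing the hereditarily rational isomorphism (not merely the continuous rational one) are sound but add nothing beyond the paper's reasoning.
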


\begin{proof}
By Proposition \ref{prop-her} and Theorem \ref{bijfinitebiratbis-reg},
we only have to prove that $\ReSp \Pol(Y)\to\ReSp \Pol(X)$ is
injective. Let $\p\in\ReSp \Pol(X)$ and $\q_1,\q_2\in\ReSp \Pol(Y)$
lying over $\p$. Since $\pi_{|\Z(q_i)}:\Z(\q_i)\to\Z(\p)$ is
birational, for $i\in\{1,2\}$, and $\pi$ is bijective, it follows that $\Z(q_1)=\Z(q_2)$ and the real Nullstellensatz \cite[Thm. 4.1.4]{BCR} implies that $\q_1=\q_2$.
\end{proof}

\begin{rem}
An algebraic set $X$ has
a totally real normalization if $(\pi')_{\C}^{-1}(X)=X'$ where $\pi':X'\to X$ is the normalization map. It is shown in
\cite{FMQ-futur} that finite birational maps onto an algebraic set with
totally real normalization have remarkable properties. Notably, from
\cite[Thm. 3.9]{FMQ-futur} we know that a finite birational bijection with
target an algebraic set with totally real normalization admits a rational inverse which is hereditarily rational on the central locus, and moreover it is an homeomorphism for the
Zariski topology.
\end{rem}


\section{Weak-normalization and seminormalization relative to the central locus}\label{WeakSemicentralLocus}

We introduce in this section the weak-normalization and seminormalization of an algebraic set, relative to its central locus. We develop the theory in parallel with the classical notion of normalization (and make some connection too with the biregular normalization defined in \cite{FMQ-futur}), meaning in particular that we define them via integral closures in well-chosen rings of functions. We make latter the connection with the algebraic notions of weak-normalization and seminormalization  relative to its central locus considered in section \ref{CentrallyNormalizationRing}.

\subsection{Integral closure in rings of continuous functions}

Let $X$ be an algebraic set. The normalization $X'$ of $X$ is the algebraic set whose
ring of polynomial functions is the integral closure of $\Pol(X)$ in
$\K(X)$. The biregular normalization $X^b$ of $X$ is the algebraic set whose
ring of polynomial functions is the integral closure of $\Pol(X)$ in $\SO(X)$.
Taking advantage of the sequence of inclusions
 $$\Pol(X)\hookrightarrow \SO(X)\hookrightarrow \SRR(\Cent X)\hookrightarrow
\SR(\Cent X)\hookrightarrow \K(X)$$
we define, using Proposition \ref{intermediatering}, two intermediate algebraic sets between a given algebraic set and its normalization. 
\begin{defn}
\label{defnormalizations}
Let $X$ be an algebraic
set. 
\begin{enumerate}
\item The weak-normalization relative to the central locus (or
  $w_c$-normalization) $X^{w_c}$ of $X$ is the algebraic set whose ring
  of polynomial functions is the integral closure of $\Pol(X)$ in
  $\SR(\Cent X)$.
\item The seminormalization relative to the central locus (or
  $s_c$-normalization) $X^{s_c}$ of $X$ is the algebraic set whose ring
  of polynomial functions is the integral closure of $\Pol(X)$ in
  $\SRR(\Cent X)$.
\end{enumerate}
The natural finite birational maps $\pi^{w_c}:X^{w_c}\rightarrow X$
and $\pi^{s_c}:X^{s_c}\rightarrow X$
are respectively called the $w_c$-normalization map and the
$s_c$-normalization map. The algebraic set $X$ is called centrally weakly-normal if $X=X^{w_c}$ 
and centrally seminormal if
$X=X^{s_c}$.
\end{defn}

In particular the sets $X^{w_c}$ and
$X^{s_c}$ are intermediate algebraic sets between $X^b$
and $X'$, and we derive a sequence of finite birational maps 
$$X'\to X^{w_c} \to X^{s_c} \to X^b \to X$$
that will be the object of study in the rest of the paper.

\begin{ex}\label{ex-Ko}
As a first example, consider Koll\'ar surface $X=\Z(x^3-y^3(1+z^2))$. 
The polynomial ring of the normalization is given by $\Pol(X')=\Pol(X)[x/y]$ and the rational function $x/y$ can be extended to a
  continuous function on $X$. As a consequence $X'$ coincides with $X^{w_c}$, whereas $x/y$ is not hereditarily rational. Note that $X^{s_c}\not=X^{w_c}$, more precisely $X^{s_c}$ has coordinate ring $\Pol(X)[x^2/y]$. 
  \end{ex}

\begin{rem}
\label{explication} It would make sense algebraically to replace $\Pol(X)$ by
$\SO(X)$ in Definition \ref{defnormalizations}. However in general, the
integral closure of $\SO(X)$ in $\K(X)$ or $\SR(\Cent X)$ or $\SRR(\Cent X)$ is
not the ring of regular functions of an intermediate algebraic set
between $X$ and $X'$ (see \cite{FMQ-futur}). This justify why we have decided to
work with $\Pol(X)$ rather than $\SO(X)$ (contrarily to \cite{MR}).
\end{rem}

From the very definitions of $w_c$-normalization and $s_c$-normalization, we get that, for an irreducible algebraic set $X$, if $\SR(\Cent X)=\SRR(\Cent X)$ then $X^{s_c}=X^{w_c}$. In particular, this is the case for curves or more generally varieties with isolated singularities.

On a complex algebraic variety which is normal, a rational function admitting a continuous extension at its poles is automatically regular by Riemann extension theorem. We characterize, by universal properties closed to the Riemann extension theorem, the algebraic varieties that are
``normal'' for the different normalizations we have encountered. 

\begin{prop}
	\label{unpropnormal}
	Let $X$ be a real algebraic set. Then
	\begin{enumerate}
		\item $X$ is normal if and only if every rational function in $\K(X)$
		bounded on
		$X_{\C}$ is polynomial on $X$.
		\item $X$ is seminormal if and only
		if every rational function in $\K(X)$ integral over $\Pol(X)$ and continuous on $X_{\C}$ is polynomial
		on $X$.
		\item $X$ is biregularly normal if and only if every regular function on $X$ integral over $\Pol(X)$ is
		polynomial on $X$.
		\item $X$ is centrally seminormal if and only if
		every function in $\SRR(\Cent X)$ integral over
                $\Pol(X)$ is the restriction to $\Cent X$ of a
                polynomial function on $X$.
		\item $X$ is centrally weakly-normal if and only
		if every function in $\SR(\Cent X)$ integral over
                $\Pol(X)$ is the restriction to $\Cent X$ of a
                polynomial function on $X$.
	\end{enumerate}
\end{prop}

\begin{proof}
	Notice that a regular function on $X_{\C}$ is polynomial on $X_{\C}$. By the Riemann extension theorem then we get (1). Statement (2) follows
	from \cite{AB, AN} and \cite{T}. The last three statements are direct
	consequences from the definitions of the biregular, centrally weakly
	and centrally semi-normalizations.
\end{proof}

\begin{rem} 
	We see by Proposition
	\ref{unpropnormal} that $w_c$-normalization
	doesn't correspond to $w_c$-normalization
 of the irreducible components like
	classical normalization does (and the same remark holds for the $s_c$-normalization). More precisely, it may happen that
	every irreducible component of a non centrally weakly-normal algebraic set is centrally weakly-normal. Consider for instance the union of three
	distinct two-by-two lines passing through the origin in $\R^2$. In that case the $w_c$-normalization is the union of the three
	axes in $\R^3$ and it is also the $s_c$-normalization (see the next
	section). 
\end{rem}

We can identify the functions in $\Pol(X^{w_c})$ with the rational functions in
$\K(X)$ that are integral over $\Pol(X)$ and that admit a continuous
extension to $\Cent X$. Similarly, the functions in $\Pol(X^{s_c})$ are the rational functions in
$\K(X)$ that are integral over $\Pol(X)$ and that admit an hereditarily
rational extension to $\Cent X$. We state this fact as a lemma for further reference.

\begin{lem}
\label{equivdefnormalizations}
Let $X$ be an algebraic set and $\pi':X'\rightarrow X$ be the normalization map.
\begin{enumerate}
\item The polynomial functions on $X^{w_c}$ form the subring of $\Pol(X')$ given by $$\Pol(X^{w_c})=\{g\in\Pol(X')|\,\exists f\in\SR(\Cent X)\,{\rm
    such\,  that}\,f\circ\pi'=g\,{\rm on}\,\Cent X'\}.$$
\item The polynomial functions on $X^{s_c}$ form the subring of $\Pol(X')$ given by  $$\Pol(X^{s_c})=\{g\in\Pol(X')|\,\exists f\in\SRR(\Cent X)\,{\rm
    such\,  that}\,f\circ\pi'=g\,{\rm on}\,\Cent X'\}.$$
\end{enumerate}
\end{lem}

We aim to prove that the ring $\Pol(X^{w_c})$ is the weak normalization relative to the central locus of the ring $\Pol(X)$, together with the analogous statement for $\Pol(X^{s_c})$ with respect to the seminormalization. Next result can be viewed as a first step in this direction.

\begin{prop}\label{cor-weak-bij} Let $X$ be an algebraic set.
\begin{enumerate}
\item The map $\pi^{w_c}:X^{w_c}\to X$ is a bijection between the central loci and its inverse map is rational continuous on the center of $X$. 
\item The map $\pi^{s_c}:X^{s_c}\to X$ is a bijection between the central loci and its inverse map is hereditarily rational on the center of $X$. 
\end{enumerate}
\end{prop}

\begin{proof} Consider the case of the weak-normalization relative to the central locus. Denote by $\psi$ the map $X'\to X^{w_c}$. Let $g\in \Pol(X^{w_c})$. By
Lemma \ref{equivdefnormalizations}, there exists a continuous function
   $f\in\SR(\Cent X)$ such
   that $g\circ\psi=f\circ\pi'$ on
   $\Cent X'$. By Lemma \ref{fondamental}, we get that
   $g=f\circ\pi^{w_c}$ on $\Cent X^{w_c}$. As a consequence the map $\pi^{w_c}:\Cent X^{w_c}\to \Cent X$ is
   bijective or equivalently $\pi^{w_c}:X^{w_c}\to X$ is a bijection on the central loci and its inverse is rational continuous by Theorem
   \ref{bijfinitebiratbis}. The proof in the seminormal case is similar using Theorem \ref{bijfinitebiratbis-reg}.
\end{proof}

Before establishing the universal property satisfied by the weak-normalization (respectively seminormalization) relative to the central locus, and make the connection with its algebraic counterpart, we discuss several examples of curves and surfaces in order to give the reader some familiarity with the new sets introduced. We provide in particular examples proving that the different notions of normalization give rise to different algebraic sets.

Let us begin the discussion with the case of curves. Recall that in that
situation rational continuous and hereditarily rational functions
coincide, so that the $w_c$-normalization and the $s_c$-normalization
will give the same sets. 
In the case of reducible curves, two crossing lines in the plane will give a centrally seminormal curve contrarily to three lines crossing in a common point.

\begin{ex} \label{ex-lines}
\begin{enumerate}
\item Two intersecting lines in the plane are centrally seminormal. Actually, an integral continuous rational function on it coincides, on each lines, with a polynomial function since a line is normal. And the data of a polynomial on each line, with the same value at the intersection point, defines a polynomial function on the union of the lines.
\item The picture is different with three lines in the plane intersecting in a common point. Here the data of a polynomial on each line, with the same value at the intersection point, does not necessarily define a polynomial function on the union of the lines. The central seminormalization will be given by three non coplanar lines in $\R^3$ 
cf. Proposition \ref{seminormcurve}.
\end{enumerate}
\end{ex}

We focus now on irreducible plane curves.
In order to make the comparison with the biregular normalization, we recall that the biregular normalization of an
algebraic set may be constructed by normalizing the non-real locus of
its complexification \cite[Prop. 4.6]{FMQ-futur}.

\begin{ex}
\begin{enumerate}
\item The $w_c$-normalization of the cuspidal plane cubic
  $X=\Z(y^2-x^3)$ coincides with its normalization, and $X=X^b\subset X^{w_c}=X'$. 
  \item The nodal plane curve $X=\Z(y^2-x^2(x+1))$ is centrally seminormal, and $X=X^b=X^{w_c}\subset X'$.
\item \label{ex-weak-curve} The seminormalization with respect to the central locus can be different from $X$ and $X'$. The origin is the unique singular point of $X=\Z(y^2-x^4(x+1))$,
  where two distinct branches intersect with tangency. 
  Note that the rational function $y/x$ satisfies $(y/x)^2=x^2(x+1)$ and in
  this case 
  $$\Pol(X')=\Pol(X)[y/x^2]=\Pol(X)[z]/(z^2-x-1,
    x^2z-y).$$
It follows that
 $X^{w_c}$ has coordinate ring
 $\Pol(X^{w_c})=\Pol(X)[y/x]$. Notice that $X=X^b$ here.


\end{enumerate}
\end{ex}

We discuss now some examples of surfaces in $\R^3$. We already mentioned that Koll\'ar surface is an example where the weak-normalization relative to the central locus 
differs from the seminormalization relative to the central
locus. Note however that in the case of isolated singularities
(which are not necessarily normal if the complex singularities are not
isolated), the rational continuous and hereditarily rational functions coincide, so that the $w_c$-normalization and the $s_c$-normalization will be the same.

Consider first reducible surfaces. 

\begin{ex}
\begin{enumerate}
\item The union of two intersecting planes in $\R^3$ defines a centrally seminormal surface, for similar reason that in Example \ref{ex-lines}. We can also use the stability 
of the central seminormalization under product that will be proved in
Proposition \ref{cor-prod-semi} to conclude, since a line is centrally seminormal, and two intersecting lines too.
\item Similarly the union of the coordinates hyperplanes in $\R^n$ is centrally seminormal. 
\end{enumerate}
\end{ex}

Finally we discuss two classical examples of umbrellas.

\begin{ex}
\begin{enumerate}
\item The normalization of the (non-central) Whitney umbrella $X=\Z(x^2-y^2z)$ is the affine plane via $\Pol(X')\simeq\R[y,x/y]$. Note that the normalization map is not a bijection on the central loci, so that $X=X^b=X^{w_c}=X^{s_c}$. 
\item The normalization of the (non-central) Cartan umbrella $X=\Z(x^3-(x^2+y^2)z)$ is given by $\Pol(X')=\Pol[X][yz/x]$. The rational function $yz/x$
admits a continuous extension by zero along the $y$-axis, which is the
intersection of its indeterminacy locus with the central part of $X$,
so that $X'=X^{w_c}$. Moreover the restriction of $yz/x$ to the
$y$-axis is a constant function so it is still rational, so that
$X^{w_c}=X^{s_c}$. Since $yz/x$ is not regular on the Cartan umbrella, it
can be concluded that $X=X^b$.
\end{enumerate}
\end{ex}

\subsection{Universal properties for the $w_c$-normalization }

We give the universal property satisfied by the $w_c$-normalization in light of the universal property of the normalization. 

\begin{thm}
\label{propunivcentweak}
Let $X$ be an algebraic
set and let $X'$ be its normalization. Let $Y$ be an algebraic set
equipped with a finite birational map $\pi:Y\to X$. 

Then $\pi$ induces
a bijection from $\Cent Y$ to $\Cent X$ if and only if
$\pi^{w_c}:X^{w_c}\to X$ factorizes through $\pi$.
\end{thm}

\begin{proof}
Notice first that from Proposition \ref{cor-weak-bij} we know that the restriction of $\pi^{w_c}:X^{w_c}\to X$ to the central loci is a bijection. 
Let $Y$ be an algebraic set with finite birational maps $\pi:Y\to X$ and $\varphi:X'\to
Y$.

Assume first that $\pi^{w_c}$ factorizes through $\pi$. By Theorem
\ref{bijfinitebiratbis} the extension $\Pol(X)\to \Pol(X^{w_c})$ is
$w_c$-subintegral and thus $\Pol(X)\to \Pol(Y)$ is also
$w_c$-subintegral (Proposition \ref{intermedwc}). From Theorem
\ref{bijfinitebiratbis} again then it follows that $\pi:\Cent Y\rightarrow\Cent X$ is a bijection.

Assume now that $\pi:\Cent Y\rightarrow\Cent X$ is a bijection. Let
$g\in\Pol(Y)$. By Theorem \ref{bijfinitebiratbis} there exists a continuous function
   $f\in\SR(X)$ such
   that $g=f\circ\pi$ on $\Cent Y$. By Lemma
   \ref{fondamental}, then $g\circ\varphi=f\circ\pi'$ on
   $\Cent X'$ and thus $g\circ\varphi\in\Pol(X^{w_c})$ (Lemma \ref{equivdefnormalizations}). Since the
   composition by $\varphi$ gives the inclusion
   $\Pol(Y)\subset\Pol(X')$
then we get an inclusion $\Pol(Y)\subset\Pol(X^{w_c})$ and thus $\pi^{w_c}:X^{w_c}\rightarrow X$ uniquely factors through
$\pi:Y\rightarrow X$.
\end{proof}

From Theorems \ref{bijfinitebiratbis} and \ref{propunivcentweak}, we deduce that
  the $w_c$-normalization $X^{w_c}$ of $X$ is the
biggest set among the intermediate algebraic sets between $X$ and $X'$
whose central loci are birationally continuously isomorphic with the
central locus of $X$.

Note that if the $w_c$-normalization map $\pi^{w_c}:X^{w_c}\to
X$ is bijective by restriction to the central locus, it is not a bijection in general even if we assume $X$ to be central. Indeed, consider the surface $X=\Z((y^2+z^2)^2-x(x^2+y^2+z^2))$ from Example \ref{grospoint2}.
Since $\pi':X'\to X$ is a bijection by restriction to the central locus, it follows from Theorem
\ref{propunivcentweak} that $X^{w_c}=X'$. 

\begin{rem} 
The $w_c$-normalization of a (non-normal) toric variety coincides with its normalization. Indeed, the normalization is obtained by saturation of the semi-groups, giving rise to a normal toric variety with the same torus decomposition into orbits. In particular a toric variety and its normalization are in bijection. 
\end{rem}

We prove that in the geometric setting we recover the algebraic $w_c$-normalization.
\begin{thm}
\label{equivrealweaknorm}
Let $X$ be an algebraic
set and let $X'$ be its normalization. We have $$\Pol(X)^{w_c}=\Pol(X^{w_c}).$$
\end{thm}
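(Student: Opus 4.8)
The plan is to establish the equality of two subrings of $\Pol(X')$ by a double inclusion, invoking in each direction a universal property already proved in the earlier sections. The key observation making everything run smoothly is that both the algebraic $w_c$-normalization $\Pol(X)^{w_c}$ (in the sense of Section \ref{CentrallyNormalizationRing}, applied to $A=\Pol(X)$ with $A'=\Pol(X')$) and the geometric one $\Pol(X^{w_c})$ (as described in Proposition \ref{equivdefnormalizations}) are by construction subrings of $A'=\Pol(X')$, so every injectivity hypothesis needed below will be automatic.

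First I would show $\Pol(X^{w_c})\subseteq\Pol(X)^{w_c}$. By Proposition \ref{cor-weak-bij}, the restriction $\pi^{w_c}:\Cent X^{w_c}\to\Cent X$ is a bijection; by the equivalence of $(i)$ and $(v)$ in Proposition \ref{bijfinitebiratbis}, this says exactly that the extension $\Pol(X)\to\Pol(X^{w_c})$ is $w_c$-subintegral. Since $\Pol(X^{w_c})$ injects into $A'$, part 2) of Proposition \ref{UniversalWCrings} forces $\Pol(X^{w_c})\subseteq\Pol(X)^{w_c}$.

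For the reverse inclusion I would use the maximality statement of the geometric object. By part 1) of Proposition \ref{UniversalWCrings}, the ring $\Pol(X)^{w_c}$ is a $w_c$-subintegral extension of $\Pol(X)$; being an intermediate ring between $\Pol(X)$ and $\Pol(X')$, Proposition \ref{intermediatering} identifies it with $\Pol(Z)$ for a unique intermediate algebraic set $Z$ between $X$ and $X'$. Applying the implication $(v)\Rightarrow(i)$ of Proposition \ref{bijfinitebiratbis} to $\Pol(X)\to\Pol(Z)$ shows that $\pi:\Cent Z\to\Cent X$ is a bijection. Theorem \ref{propunivcentweak} then says that $X^{w_c}$ is the biggest intermediate algebraic set enjoying this bijectivity property, whence $\Pol(Z)=\Pol(X)^{w_c}\subseteq\Pol(X^{w_c})$. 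Combining the two inclusions yields the asserted equality.

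The only point requiring genuine care is the bookkeeping of the word ``biggest'' in Theorem \ref{propunivcentweak}: a larger intermediate algebraic set corresponds to a larger ring of polynomial functions, so the maximality of $X^{w_c}$ must be read in the ring-theoretic direction as the containment $\Pol(Z)\subseteq\Pol(X^{w_c})$. Apart from keeping this translation straight, the argument is a direct transcription through the already-established dictionary between central-locus bijectivity of finite birational maps and $w_c$-subintegrality of the corresponding ring extensions, so I do not anticipate any genuinely new obstacle.
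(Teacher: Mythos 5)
Your proposal is correct and follows essentially the same route as the paper: one inclusion via part 1) of Proposition \ref{UniversalWCrings}, Proposition \ref{intermediatering}, the equivalence $(v)\Leftrightarrow(i)$ of Proposition \ref{bijfinitebiratbis} and the maximality in Theorem \ref{propunivcentweak}; the other via Proposition \ref{cor-weak-bij}, Proposition \ref{bijfinitebiratbis} and part 2) of Proposition \ref{UniversalWCrings}. You merely present the two inclusions in the opposite order and are somewhat more explicit about the injection into $\Pol(X')$ needed for the universal property, which is a welcome clarification.
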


\begin{proof}
The morphism $\Pol(X)\rightarrow \Pol(X)^{w_c}$ being $w_c$-subintegral (see Proposition \ref{UniversalWCrings}),
it induces a bijection in restriction to the central maximal ideals
and thus a bijection from $\Cent Y$ onto $\Cent X$ where $Y$ is the
algebraic set with $\Pol(X)^{w_c}$ as ring of polynomial functions
(see
Proposition \ref{intermediatering}). By the universal property Theorem \ref{propunivcentweak}, on gets that
$Y$ is an intermediate algebraic set between $X$ and $X^{w_c}$, and hence 
$\Pol(X)^{w_c}=\Pol(Y)\subset\Pol(X^{w_c})$
(recall that $\Pol(X)^{w_c}$ and $\Pol(X^{w_c})$ are both viewed in 
$\Pol(X')$).

To show the converse inclusion, let us note that it follows from
Theorem \ref{bijfinitebiratbis} and Proposition \ref{cor-weak-bij} that the extension ring $\Pol(X)\rightarrow\Pol(X^{w_c})$
is $w_c$-subintegral. Then, by the universal property Proposition \ref{UniversalWCrings}, one derives a factorization morphism
$\Pol(X^{w_c})\rightarrow\Pol(X)^{w_c}$.
\end{proof}


As direct consequence of Theorem \ref{equivrealweaknorm} and Proposition \ref{WeakIdempotency}, 
one gets the idempotency of the $w_c$-normalization :
\begin{cor}
\label{idemp1}
Let $X$ be an algebraic set. Then, $(X^{w_c})^{w_c}=X^{w_c}$.
\end{cor}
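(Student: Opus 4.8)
The plan is to deduce Corollary \ref{idemp1} directly from two earlier results, treating it as a purely formal consequence at the level of rings and then transporting it back to the geometric side via the identification provided by Theorem \ref{equivrealweaknorm}. The key observation is that the corollary asserts a geometric idempotency, $(X^{w_c})^{w_c}=X^{w_c}$, which should reduce to the already-established algebraic idempotency $(A^{w_c})^{w_c}=A^{w_c}$ from Proposition \ref{WeakIdempotency}, applied to $A=\Pol(X)$.

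Concretely, I would first invoke Theorem \ref{equivrealweaknorm} to write $\Pol(X^{w_c})=\Pol(X)^{w_c}$. The only subtlety to check is that Theorem \ref{equivrealweaknorm} applies not just to $X$ but also to the algebraic set $X^{w_c}$ in place of $X$: since $X^{w_c}$ is itself an irreducible algebraic set (being an intermediate algebraic set between $X$ and its normalization $X'$, by Proposition \ref{intermediatering}), the theorem gives $\Pol((X^{w_c})^{w_c})=\Pol(X^{w_c})^{w_c}$. Here one must note that $X^{w_c}$ and $X$ have the same normalization $X'$, since they share the same fraction field $\K(X)$ and integral closure; this is what guarantees that the weak-normalization of $X^{w_c}$ is again an intermediate algebraic set in the same tower.

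Next I would chain the identities. Starting from $\Pol((X^{w_c})^{w_c})=\Pol(X^{w_c})^{w_c}$ and substituting $\Pol(X^{w_c})=\Pol(X)^{w_c}$, the right-hand side becomes $(\Pol(X)^{w_c})^{w_c}$. Applying Proposition \ref{WeakIdempotency} with $A=\Pol(X)$ yields $(\Pol(X)^{w_c})^{w_c}=\Pol(X)^{w_c}=\Pol(X^{w_c})$. Hence $\Pol((X^{w_c})^{w_c})=\Pol(X^{w_c})$, and since an irreducible algebraic set is determined up to isomorphism by its ring of polynomial functions (via the uniqueness in Proposition \ref{intermediatering}, both sets sitting as intermediate algebraic sets between $X$ and $X'$), we conclude $(X^{w_c})^{w_c}=X^{w_c}$.

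The step I expect to require the most care is the mild bookkeeping around normalizations: verifying that $X^{w_c}$ has the same normalization $X'$ as $X$ so that Theorem \ref{equivrealweaknorm} can legitimately be reapplied to $X^{w_c}$, and that the resulting equality of polynomial rings genuinely identifies the two algebraic sets rather than merely their function rings abstractly. Both points are routine given the machinery already developed — in particular the uniqueness clause of Proposition \ref{intermediatering} and the fact that weak-normalization never enlarges the fraction field — so the argument is essentially a one-line translation of Proposition \ref{WeakIdempotency} through Theorem \ref{equivrealweaknorm}, exactly as the corollary's phrasing (``direct consequence'') suggests.
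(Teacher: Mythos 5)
Your proposal is correct and follows exactly the route the paper intends: the corollary is stated as a direct consequence of Theorem \ref{equivrealweaknorm} and Proposition \ref{WeakIdempotency}, which is precisely the chain of identifications you spell out (applying the theorem to both $X$ and $X^{w_c}$ and invoking the algebraic idempotency). The extra bookkeeping you flag — that $X^{w_c}$ shares the normalization $X'$ and that equal polynomial rings identify the intermediate algebraic sets — is the right thing to check and is handled by Proposition \ref{intermediatering} as you say.
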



The inverse of a finite birational map that induces a bijection between the central loci is known 
to be rational continuous
(see Theorem \ref{bijfinitebiratbis}). Next result gives a stronger statement, when we assume
that the target variety is centrally weakly normal. It is a
direct application of our universal properties (Theorem \ref{propunivcentweak} and Proposition \ref{UniversalWCrings}) and it gives a real version of
\cite[Cor. 2.8]{LV}.
\begin{prop}
\label{lvreel}
Let $X$ be an algebraic
set. Suppose that $X$ is centrally weakly-normal and that
$\varphi:Y\rightarrow X$ is a finite birational polynomial map with
$Y$ an algebraic set. Then $\varphi$ is a bijection on the central loci
if and only if $\varphi$ is an isomorphism.
\end{prop}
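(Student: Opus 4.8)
The plan is to reduce both implications to statements about the coordinate rings and then invoke the universal property of the $w_c$-normalization. The reverse implication is immediate: if $\varphi$ is an isomorphism, then it is a homeomorphism for the Euclidean topology carrying nonsingular points to nonsingular points, hence it sends $\Cent Y=\overline{Y_{reg}}$ bijectively onto $\Cent X=\overline{X_{reg}}$, and in particular restricts to a bijection on the central loci.

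For the forward implication, I would first observe that $Y$ is an intermediate algebraic set between $X$ and its normalization $X'$. Since $\varphi$ is finite and birational, Lemma \ref{closedeucl} shows that the induced morphism $\Pol(X)\hookrightarrow\Pol(Y)$ is injective and integral, with $\K(X)\simeq\K(Y)$. As every element of $\Pol(Y)$ is then integral over $\Pol(X)$ inside $\K(X)$, we get $\Pol(X)\subset\Pol(Y)\subset\Pol(X')$, so $Y$ genuinely sits between $X$ and $X'$ in the sense of Proposition \ref{intermediatering}.

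Now assume $\varphi$ restricts to a bijection $\Cent Y\to\Cent X$. Then $Y$ is precisely one of the intermediate algebraic sets considered in Theorem \ref{propunivcentweak}, and since $X^{w_c}$ is the biggest such set we obtain the inclusion $\Pol(Y)\subset\Pol(X^{w_c})$. Equivalently, by Proposition \ref{bijfinitebiratbis} the extension $\Pol(X)\to\Pol(Y)$ is $w_c$-subintegral and injects into $\Pol(X')$, so Proposition \ref{UniversalWCrings} (together with the identification $\Pol(X^{w_c})=\Pol(X)^{w_c}$ of Theorem \ref{equivrealweaknorm}) yields the same inclusion. Using the hypothesis that $X$ is centrally weakly-normal, namely $\Pol(X^{w_c})=\Pol(X)$, we conclude $\Pol(Y)\subset\Pol(X)$; combined with the reverse inclusion $\Pol(X)\subset\Pol(Y)$ coming from birationality, this gives $\Pol(X)=\Pol(Y)$, so that $\varphi$ is an isomorphism.

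There is no serious obstacle once the universal property is in place; the only point requiring a moment of care is the verification that $Y$ really lies between $X$ and $X'$, so that Theorem \ref{propunivcentweak} applies, and this rests entirely on the integrality of $\Pol(Y)$ over $\Pol(X)$ guaranteed by finiteness of $\varphi$.
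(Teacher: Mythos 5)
Your argument is correct and is precisely the route the paper intends: it states Proposition \ref{lvreel} as a direct application of the universal property in Theorem \ref{propunivcentweak} (together with Proposition \ref{UniversalWCrings}), which is exactly your chain $\Pol(X)\subset\Pol(Y)\subset\Pol(X^{w_c})=\Pol(X)$ after checking that $Y$ is an intermediate algebraic set between $X$ and $X'$.
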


Note that it is not possible to remove the ``finite'' hypothesis in the
  previous proposition. For instance, let $X$ be the
  nodal curve given by $y^2=x^2(x+1)$ in $\R^2$, and $Y$ be the
  hyperbola given by $xy=1$ in $\R^2$. They are both centrally weakly normal and
  both in polynomial bijection with the punctured line $\R\setminus \{1\}$
  however they are not isomorphic curves since $X$ is
singular whereas $Y$ is smooth.

\vskip 2mm

Centrally weakly normal sets are stable under the product of varieties. 
\begin{prop}
	\label{cor-prod-weak} Let $X$ and $Y$ be centrally weakly-normal algebraic sets. Then $X\times Y$ is centrally weakly-normal.
\end{prop}

\begin{proof} We use the same strategy as in \cite[Cor. 2.13]{LV}.
	Let $f$ be a rational continuous function on $\Cent (X\times Y)$ which is
	integral over $\Pol(X\times Y)$. By Proposition
	\ref{unpropnormal}, we have to show that $f$ is polynomial on $\Cent (X\times Y)$.
	Then, for any $x\in \Cent X$, the
	restriction $f_x$ of $f$ to $\{x\}\times Y$ satisfies an integral
	equation over $\Pol(Y)$. 
	Note however that, if $f_x$ is not necessarily a rational function on
	$Y$,  there exists a Zariski dense subset $U$ in $X$ such that $f_x$
	is rational for any $x\in U$. By intersecting $U$ with the union of
	the non-singular loci of the irreducible components of $X$ then we may
	assume $U\subset \Cent X$. By $w_c$-normality of $Y$, it
	follows that $f_x$ is polynomial on $\Cent Y$ for any $x \in U$. 
	Similarly, there exists a Zariski dense subset $V$ in $Y$ such that
	$V\subset \Cent Y$ and
	$f_y$ is polynomial on $\Cent X$ for any $y \in V$.
	
	We want to conclude that $f$ coincides with a polynomial function on $\Cent 
	(X\times
	Y)$. We know by Palais \cite{Pa} that $f$ is polynomial on $U\times V$,
	so that there exists a polynomial function $p\in \Pol(X\times Y)$ such
	that $f=p$ on $U\times V$. 
	Since $f$ is continuous, it implies that $f=p$ on $\Cent (X\times Y)$
	i.e $f$ is polynomial on $\Cent (X\times Y)$. As a consequence $X \times Y$ is centrally weakly-normal.
\end{proof}

\subsection{Universal property for the $s_c$-seminormalization}

Here is the universal property for the seminormalization relative to the central locus.

\begin{thm}
\label{propunivseminormalization}
Let $X$ be an algebraic
set and let $X'$ be its normalization. Let $Y$ be any algebraic set equipped with 
a finite birational map $\pi:Y\rightarrow X$.

Then $\pi^{-1}$ is hereditarily rational on the center of $X$ if
and only if $\pi^{s_c}:X^{s_c}\to X$ factorizes through $\pi$.
\end{thm}

\begin{proof}
Notice first that from Proposition \ref{cor-weak-bij} we know that the rational inverse of $\pi^{s_c}$ is hereditarily rational on the center of $X$.

Assume $\pi^{s_c}:X^{s_c}\to X$ factorizes through $\pi$. We know by Proposition \ref{cor-weak-bij} and Theorem \ref{bijfinitebiratbis-reg} that $\Pol(X)\to\Pol(X^{s_c})$ is
centrally subintegral. Then it follows from Proposition \ref{intermedwc}
that $\Pol(X)\to\Pol(Y)$ is
centrally subintegral. From Theorem
\ref{bijfinitebiratbis-reg} we conclude that the rational inverse of $\pi:Y\to X$ 
is hereditarily rational on the center of $X$.

Assume now $\pi:Y\rightarrow X$ is a
finite birational map whose inverse is hereditarily rational on the center of $X$. By the universal property of the normalization,
the normalization map $\pi'$ factorizes by $\pi$ and let
$\psi:X'\to Y$ be the finite birational map such that
$\pi'=\pi\circ\psi$. Let $g\in\Pol(Y)$. By Theorem \ref{bijfinitebiratbis-reg}, there
exists $f\in\SRR(\Cent X)$ such that $g=f\circ\pi$ on
$\Cent Y$. By Lemma \ref{fondamental}, we get
$g\circ\psi=f\circ\pi'$ on $\Cent X'$ and thus
$g\circ\psi\in\Pol(X^{s_c})$ (Lemma \ref{equivdefnormalizations}). It shows that $\Pol(Y)\subset \Pol(X^{s_c})$
and the proof is done.
\end{proof}

From Theorems \ref{bijfinitebiratbis-reg} and \ref{propunivseminormalization}, we deduce that
  the $s_c$-normalization $X^{s_c}$ of $X$ is the
biggest set among the intermediate algebraic sets between $X$ and $X'$ equipped with a map to $X$ whose rational inverse is hereditarily rational on the center of $X$.

Let us now show how we recover the algebraic 
$s_c$-seminormalization:
\begin{thm}
\label{equivrealseminorm}
Let $X$ be an algebraic set. We have $\Pol(X)^{s_c}=\Pol(X^{s_c}).$
\end{thm}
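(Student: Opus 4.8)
The plan is to mirror the two-inclusion argument used for Theorem \ref{equivrealweaknorm}, systematically replacing every $w_c$-object by its $s_c$-counterpart. Thus I would establish separately that $\Pol(X)^{s_c}\subset\Pol(X^{s_c})$ and that $\Pol(X^{s_c})\subset\Pol(X)^{s_c}$, both inclusions taking place inside $\K(X)$.

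First I would prove $\Pol(X)^{s_c}\subset\Pol(X^{s_c})$. By part 1) of Proposition \ref{UniversalNCrings} the extension $\Pol(X)\rightarrow\Pol(X)^{s_c}$ is $s_c$-subintegral, and in particular integral, so that Proposition \ref{intermediatering} produces an irreducible algebraic set $Y$ with $\Pol(Y)=\Pol(X)^{s_c}$ together with a finite birational map $\pi:Y\rightarrow X$. Now the equivalence (iv)$\Leftrightarrow$(i) in Proposition \ref{bijfinitebiratbis-reg} translates the centrally subintegral character of $\Pol(X)\rightarrow\Pol(Y)$ into the geometric statement that $\pi_{|\Cent Y}:\Cent Y\rightarrow\Cent X$ is an hereditarily birational bijection. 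Since Theorem \ref{propunivseminormalization} identifies $X^{s_c}$ as the biggest intermediate algebraic set between $X$ and $X'$ carrying this property, the set $Y$ must be intermediate between $X$ and $X^{s_c}$, whence $\Pol(X)^{s_c}=\Pol(Y)\subset\Pol(X^{s_c})$.

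For the reverse inclusion $\Pol(X^{s_c})\subset\Pol(X)^{s_c}$, I would first verify that the extension $\Pol(X)\rightarrow\Pol(X^{s_c})$ is itself $s_c$-subintegral. This follows by combining Proposition \ref{cor-weak-bij}, which asserts that $\pi^{s_c}$ restricts to an hereditarily birational bijection $\Cent X^{s_c}\rightarrow\Cent X$, with the implication (i)$\Rightarrow$(iv) of Proposition \ref{bijfinitebiratbis-reg}. Once this is in hand, part 2) of Proposition \ref{UniversalNCrings} applies with $A=\Pol(X)$ and $B=\Pol(X^{s_c})$ and yields an injection $\Pol(X^{s_c})\hookrightarrow\Pol(X)^{s_c}$ which is the identity on $\Pol(X)$; as both rings are subrings of $\K(X)$ compatible with this identification, the injection is the desired inclusion.

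I expect no serious obstacle, the argument being formally identical to that of Theorem \ref{equivrealweaknorm}. The one point deserving attention, and the place where the seminormal case is in fact simpler than the weak one, is that part 2) of Proposition \ref{UniversalNCrings} does not require the auxiliary hypothesis that $B$ inject into $A'$, which was indispensable in Proposition \ref{UniversalWCrings}. The reason is that centrally subintegral extensions enjoy the uniqueness of the central real prime lying over a given $\p$ at the level of all prime ideals rather than only maximal ones, so the pathology recorded after Proposition \ref{UniversalWCrings}, where $B_\p\not\simeq B_\q$, cannot occur here.
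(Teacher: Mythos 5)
Your proposal is correct and follows essentially the same route as the paper's own proof: both inclusions are obtained exactly as in the paper, via Proposition \ref{intermediatering}, the equivalence (i)$\Leftrightarrow$(iv) of Proposition \ref{bijfinitebiratbis-reg}, Theorem \ref{propunivseminormalization} for one direction, and Proposition \ref{cor-weak-bij} together with Proposition \ref{UniversalNCrings} for the other. Your closing observation about why the auxiliary injectivity hypothesis of Proposition \ref{UniversalWCrings} is not needed here is a pertinent extra remark but does not change the argument.
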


\begin{proof}
Let $\pi':X'\rightarrow X$ be the normalization map.
Let us see first that we have $\Pol(X)^{s_c}\subset\Pol(X^{s_c})$. Let $Y$ be the 
algebraic set associated to the ring $\Pol(X)^{s_c}$ (see
Proposition \ref{intermediatering}). 
The morphism $\Pol(X)\rightarrow \Pol(X)^{s_c}$ being
centrally subintegral (see Proposition \ref{UniversalNCrings}), from Theorem \ref{bijfinitebiratbis-reg} it follows
that the finite birational map $Y\to X$ induces a bijection from $\Cent Y$ onto $\Cent X$ and its rational inverse is hereditarily rational on the center of $X$. By the universal property of Theorem \ref{propunivseminormalization}, one gets that
$Y$ is an intermediate algebraic set between $X$ and $X^{s_c}$, and hence 
$\Pol(X)^{s_c}=\Pol(Y)\subset\Pol(X^{s_c})$.

To show the converse inclusion, it follows from Proposition \ref{cor-weak-bij} and Theorem \ref{bijfinitebiratbis-reg} that the extension ring $\Pol(X)\rightarrow\Pol(X^{s_c})$
is centrally subintegral. Then, by the universal property Proposition \ref{UniversalNCrings}, one derives a factorization morphism
$\Pol(X^{s_c})\rightarrow\Pol(X)^{s_c}$. 
\end{proof}

Combined with the idempotency properties of the algebraic $s_c$-seminormalization (Proposition \ref{SemiIdempotency}), we obtain :
\begin{cor}
\label{idemp2}
Let $X$ be an algebraic set. Then 
\begin{enumerate}
 \item $(X^{s_c})^{s_c}=X^{s_c}$,
 \item $(X^{w_c})^{s_c}=X^{w_c}$,
 \item $(X^{s_c})^{w_c}=X^{w_c}$.
\end{enumerate}
\end{cor}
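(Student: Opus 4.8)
The plan is to transfer the three identities from their already-established algebraic counterparts in Proposition \ref{SemiIdempotency} to the geometric setting, using the two identification results Theorem \ref{equivrealweaknorm} and Theorem \ref{equivrealseminorm} as dictionaries. The guiding observation is that all the algebraic sets involved are intermediate between $X$ and its normalization $X'$, so equality of two such sets amounts to equality of their rings of polynomial functions viewed as subrings of $\K(X)$; it therefore suffices to verify each identity at the level of polynomial rings.

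First I would record that the identification theorems may be applied not only to $X$ but to any intermediate set between $X$ and $X'$. Indeed, $X^{w_c}$ and $X^{s_c}$ are again irreducible algebraic sets by Proposition \ref{intermediatering}, and their normalization is still $X'$, since $\Pol(X')$ is integrally closed and integral over $\Pol(X^{w_c})$ and $\Pol(X^{s_c})$, hence coincides with their integral closure in $\K(X)$. With this in hand each part follows by the same two-step pattern. For part 1), applying Theorem \ref{equivrealseminorm} to $X^{s_c}$ gives $\Pol((X^{s_c})^{s_c})=\Pol(X^{s_c})^{s_c}$; rewriting $\Pol(X^{s_c})=\Pol(X)^{s_c}$ by the same theorem applied to $X$ turns the right-hand side into $(\Pol(X)^{s_c})^{s_c}$, which equals $\Pol(X)^{s_c}=\Pol(X^{s_c})$ by part 1) of Proposition \ref{SemiIdempotency}. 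Hence $(X^{s_c})^{s_c}=X^{s_c}$.

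Parts 2) and 3) proceed identically, substituting the relevant flavour of normalization. For 2), Theorem \ref{equivrealseminorm} applied to $X^{w_c}$ and Theorem \ref{equivrealweaknorm} applied to $X$ reduce $\Pol((X^{w_c})^{s_c})$ to $(\Pol(X)^{w_c})^{s_c}$, which is $\Pol(X)^{w_c}=\Pol(X^{w_c})$ by part 2) of Proposition \ref{SemiIdempotency}. For 3), Theorem \ref{equivrealweaknorm} applied to $X^{s_c}$ together with Theorem \ref{equivrealseminorm} applied to $X$ reduce $\Pol((X^{s_c})^{w_c})$ to $(\Pol(X)^{s_c})^{w_c}=\Pol(X)^{w_c}=\Pol(X^{w_c})$ by part 3) of Proposition \ref{SemiIdempotency}.

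There is essentially no hard analytic step here; the only point that requires care is the one flagged above, namely confirming that the identification theorems may legitimately be iterated, i.e. that the operations $X\mapsto X^{w_c}$ and $X\mapsto X^{s_c}$ send irreducible algebraic sets to irreducible algebraic sets with the same normalization $X'$, so that Theorem \ref{equivrealweaknorm} and Theorem \ref{equivrealseminorm} apply to the intermediate sets. Once this bookkeeping is settled, the corollary is a purely formal consequence of the algebraic idempotency already proved in Proposition \ref{SemiIdempotency}.
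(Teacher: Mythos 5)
Your proposal is correct and follows the same route as the paper, which derives the corollary directly from Theorem \ref{equivrealseminorm} (together with Theorem \ref{equivrealweaknorm}) and the ring-level idempotency of Proposition \ref{SemiIdempotency}. Your extra care in checking that the identification theorems can be iterated --- i.e.\ that $X^{s_c}$ and $X^{w_c}$ are irreducible intermediate algebraic sets with normalization $X'$ --- is a detail the paper leaves implicit, but it is the same argument.
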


An immediate application of Theorem \ref{propunivseminormalization}
gives a seminormal version of Proposition \ref{lvreel}.
\begin{prop}
\label{lvreel2}
Let $X$ be an algebraic
set. Suppose that $X$ is centrally seminormal and that
$\varphi:Y\rightarrow X$ is a finite birational polynomial map with
$Y$ an algebraic set. Then $\varphi^{-1}$ is hereditarily rational on the center of $X$ if and only if $\varphi$ is an isomorphism.
\end{prop}




Centrally seminormal sets are stable under the product of varieties. 
\begin{prop}\label{cor-prod-semi} Let $X$ and $Y$ be centrally seminormal algebraic sets. Then $X\times Y$ is centrally seminormal.
\end{prop}

\begin{proof} We use the same proof as in Proposition	\ref{cor-prod-weak}. Note that the proof is even simpler since the
	restriction of an hereditarily rational function is rational so that
	we can choose $U$ and $V$ to be respectively the union of the
	non-singular loci of the irreducible components of $X$ and $Y$.
\end{proof}

The following proposition provides a lot of examples of varieties for
which $w_c$-normalization and $s_c$-normalization coincide but the
rings of continuous rational functions and hereditarily rational
function may be different.

\begin{prop}
  \label{totreal}
Let $X$ be a central algebraic set with totally real
normalization. If $X^{w_c}$ is central then $X^{w_c}=X^{s_c}$.
\end{prop}

\begin{proof}
By Proposition \ref{bijfinitebirat2} then $X^{w_c}$ is central. If $X$ has totally real
normalization then it follows from \cite[Thm. 3.9]{FMQ-futur} that the
bijective finite birational map $\pi^{w_c}:X^{w_c}\to X$ admits an inverse which is 
hereditarily rational and the proof is done.
\end{proof}



\subsection{Up to biregular isomorphisms}
From \ref{sectiongeoalg}, the constructions of the weak and semi normalizations we have made for
real algebraic sets can be extended to affine real algebraic varieties
(such that the set of real closed points is Zariski dense in the set
of closed points). It is in particular clear that isomorphic affine real
algebraic varieties have isomorphic weak and semi normalizations. We
prove moreover in this section that biregular affine real algebraic varieties
have biregular weak and semi normalizations. Combined with the fact
that the real closed points of a quasi-projective variety defined over
$\R$ are always included in an affine variety via a regular map, this
proved that the weak and semi normalizations are well defined for
quasi-projective real algebraic varieties in the sense of \cite{BCR}.

To go toward the proof, we begin with the fact that our constructions
commute with the localization along the multiplicative part $S$
defining the regular functions on an algebraic set $X\subset \R^n$, namely $S=1+\sum \Pol(X)^2$. This result has to be compared with the results in \ref{sect-comm}.

\begin{lem}\label{oneplussquaresseminorm} Let $X$ be an algebraic set and $S=1+\sum \Pol(X)^2$. Then $S^{-1}\Pol(X^{w_c})\simeq \SO(X)^{w_c}$ and $S^{-1}\Pol(X^{s_c})\simeq \SO(X)^{s_c}$.
\end{lem}

\begin{proof}
Let us deal with $w_c$-normalization. We aim to prove that the second inclusion in the sequence of natural inclusions
$$\SO(X) \to S^{-1}\Pol(X^{w_c}) \to \SO(X)^{w_c}$$
is an isomorphism. Let $f$ be an element in $\SO(X)^{w_c}$. Note that
$\SO(X)^{w_c}$ is included in $\SO(X)'$, this latter being isomorphic
to $S^{-1}\Pol(X')$ since normalization commutes with localization, so
that there exists $s\in S$ such that $sf$ belongs to $\Pol(X')$. So it
is sufficient to prove that $sf$ belongs to $\Pol(X^{w_c})=\Pol(X)^{w_c}$. Note that we
have a natural bijection $\RCent \Pol(X)\to \RCent \SO(X)$, $\p\mapsto
S^{-1}\p$ that gives also a bijection by restriction on maximal ideals on
both sides (see \cite{FMQ-futur}).
Let $\mathfrak m$ be a central maximal ideal in $\Pol(X)$, and denotes by $\tilde {\mathfrak m}$ the corresponding maximal ideal in $\SO(X)$. Note that $\Pol(X)_{\mathfrak m}\simeq \SO(X)_{\tilde {\mathfrak m}}$ and $\Pol(X')_{\mathfrak m}\simeq \SO(X')_{\tilde {\mathfrak m}}$. Then by assumption
$$(sf)_{\tilde {\mathfrak m}}\in \SO(X)_{\tilde {\mathfrak m}}+ \JRadCe \SO(X')_{\tilde {\mathfrak m}}$$
so that 
$$(sf)_{\mathfrak m} \in \Pol(X)_{\mathfrak m}+ \JRadCe \Pol(X')_{\mathfrak m},$$
proving that $sf \in \Pol(X^{w_c})$ as expected.

For $s_c$-normalization, we proceed similarly considering central prime ideal moreover.
\end{proof}

With this in hand one can prove the desired result.

\begin{thm}\label{bireg} Let $X$ and $Y$ be biregular real algebraic sets. Then $X^{w_c}$ is biregular to $Y^{w_c}$, and $X^{s_c}$ is biregular to $Y^{s_c}$.
\end{thm}

\begin{proof}
Let us prove the result about $w_c$-normalization, we proceed likewise for 
$s_c$-normalization.

Using Theorem \ref{equivrealweaknorm} and Lemma
\ref{oneplussquaresseminorm}, one has a canonical commutative diagram
of injective morphisms
$$\begin{array}{ccccccc}
	\Pol(X)&\rightarrow &\SO(X)&\simeq& \SO(Y)&\leftarrow &\Pol(Y)\\
	\downarrow&&\downarrow&&\downarrow&&\downarrow\\
	\Pol(X)^{w_c}&\rightarrow &S^{-1}\Pol(X)^{w_c}\simeq \SO(X)^{w_c}&\simeq &
	\SO(Y)^{w_c}\simeq T^{-1}\Pol(Y)^{w_c}&
	\leftarrow&\Pol(Y)^{w_c}\\
	&\searrow&\downarrow&&\downarrow&\swarrow\\
	&&S_{w_c}^{-1}\Pol(X^{w_c})=\SO(X^{w_c})& &T_{w_c}^{-1}\Pol(Y^{w_c})=\SO(Y^{w_c})\\
\end{array}$$
where we have set $S=1+\sum \Pol(X)^2$, 
$S_{\omega_c}=1+\sum \Pol(X^{\omega_c})^2$,
$T=1+\sum \Pol(Y)^2$, $T_{\omega_c}=1+\sum \Pol(Y^{\omega_c})^2$.

Producing a canonical morphism from one bottom ring to the other one will show, by symmetry, that they both are canonically isomorphic.

Using the horizontal middle morphisms, one has a canonical morphism 
$$\Pol(X)^{w_c}\rightarrow T_{w_c}^{-1}\Pol(Y^{w_c}).$$

To show that it factorizes through 
$S_{w_c}^{-1}\Pol(X^{w_c})$, it is enough to prove that any element of 
$S_{w_c}$ becomes invertible in 
$T_{w_c}^{-1}\Pol(Y^{w_c})$.

Let us then consider $s=1+\sum a_i^2$ where
$a_i\in \Pol(X^{w_c})$.
There exists $b_i\in \Pol(Y^{w_c})$ and 
$t_i\in T$ such that $a_i$ is sent to 
$b_i/t_i$ in $T^{-1}\Pol(Y)^{w_c}$.
Since $t_i^2\in T\subset T_{w_c}$ is invertible
in $T_{w_c}^{-1}\Pol(Y^{w_c})$, it 
is enough to show that $f=t^2+\sum c_i^2$ 
is invertible in $T_{w_c}^{-1}\Pol(Y^{w_c})$ for $t\in T$ and $c_i\in\Pol(Y)^{w_c}$. 
Since $t=1+\sum u_i^2$ with $u_i\in 
\Pol(Y)$, we get that $f\in T_{w_c}$.
This gives the result.
\end{proof}

Note that the same proof adapts to the case of the normalization
(stated in \cite[page 75]{BCR}) since the normalization process commutes with localization.
\subsection{Commutation with localization}\label{sect-comm}
It is known, for example from \cite[Corollary 1.6]{LV}, that the standard seminormalization $^+A$ commutes with localization.
The proof of this result follows quite directly from the conductor criterion for seminormal extensions as stated in \cite[Proposition 1.4]{LV}. 
Unfortunately, this criterion appears not to have a clear counterpart in our real central setting. Hence, we produce here a new argument
based on an induction on the dimension.
Namely, we get that the central seminormalization commutes with
localization at a prime ideal: 
\begin{thm}\label{thm-loca}
	Let $X$ be an algebraic set and $\p$ be a prime ideal in $\Pol(X)$. Then
	$$(\Pol(X)_{\p})^{s_c}=\Pol(X^{s_c})_{\p}.$$
\end{thm}

Remind that the commutation of the integral closure with the standard normalization is formal. It is no more formal for $s_c$-normalization although it is a still an integral closure by Theorem \ref{equivrealseminorm}. One reason is that the localized ring is no more the polynomial ring of a real algebraic variety.
Another reason is that the commutation is false for central $w_c$-normalization although this latter ring is still an integral closure by Theorem \ref{equivrealweaknorm}.

\begin{rem} The result is false for the $w_c$-normalization. Consider the normalization $X'=\mathcal Z(y^2-z)$ of the Whitney umbrella $X=\mathcal Z(y^2-zx^2)$ which is given by $\pi :X' \to X$, $(x,y,z) \mapsto (x,xy,z)$. Then $X^{w_c}=X$, and the natural inclusion
	$$\Pol(X)_{\m}=\Pol(X^{w_c})_{\m} \subset (\Pol(X)_{\m})^{w_c}$$
	is strict for $\m=(x,y,z)$ the maximal ideal of the origin in $X$. Actually, consider $f=y/x \in \K(X)$. Then $f\circ \pi=y$ vanishes at the unique preimage by $\pi$ of the origin in $X'$, therefore $f$ belongs to 
	$$\JRadCe \Pol(X')_{\m}\subset (\Pol(X)_{\m})^{w_c}.$$
	However if $qf \in \Pol(X)$ with $q\in \Pol(X)$, then $q$ must vanish along the polar locus of $f$ so that $q\in \m$. Therefore $f$ does not belong to $\Pol(X^{w_c})_{\m}$.
\end{rem}

We cut the proof of Theorem \ref{thm-loca} into several lemmas, starting with the first key Lemma \ref{lemm1} and also the following one which translates geometrically the belonging of a rational function to the $s_c$-normalization of the localization of $\Pol(X)$ at some prime ideal~: 
 
\begin{lem}\label{lemm2} Let $\pi : X' \to X$ denote the normalization map, and $f\in \K(X)$  be a rational function integral over $\Pol(X)$. Let $V$ be an irreducible component of the polar locus of $f$, with $V$ central in $X$. Assume that the localization $(f\circ \pi)_{\mathcal I(V)} $ along $\mathcal I(V)$ of the polynomial function $f\circ \pi$ on $X'$ satisfies
	$$(f\circ \pi)_{\mathcal I(V)}\in \Pol(X)_{\mathcal I(V)}+ \JRadCe{\Pol(X')_{\mathcal I(V)}}.$$
	Then there exists an algebraic subset $Z\subset V$ with $\dim Z<\dim V$ such that
	\begin{enumerate}
		\item the rational function $f$ can be extended continuously along $\Cent X \cap (V\setminus Z)$,
		\item the extension remains rational restricted to any irreducible subset of $V$ central in $X$ and not included in $Z$.
	\end{enumerate}
\end{lem}

\begin{proof} Let $W_1,\ldots,W_r\subset X'$ be the irreducible algebraic sets  central in $X'$ lying over $V$.

	By assumption on $f$, there exist $q\in \Pol(X) \setminus
        \mathcal I(V)$, $p\in \Pol(X)$ and $h\in \Pol(X')$ (we see $h$
        as a rational function on $X$ (integral over $\Pol(X)$)) such that 
	$$h\circ \pi \in \cap_{i=1}^r \mathcal I(W_i)$$
	and
	$$q f =p+h$$
	as rational functions on $X$.

	Let $\tilde Z$ be the algebraic subset of $V$ given by Lemma
	\ref{lemm1}. Let $x\in (V\setminus \tilde Z) \cap \Cent X$. By Lemma
	\ref{lemm1}, the set $\pi^{-1}(x) \cap \Cent X'$ is included in
	$W_1\cup \cdots \cup W_r$, so that $h\circ \pi$ vanishes on
	$\pi^{-1}(x) \cap \Cent X'$. This means that $h$ can be extended continuously at $x$ by setting $h(x)=0$. Note moreover that property $(2)$ holds for $h$ with respect to $\tilde Z$.
	
	Now let $Z$ be equal to $\tilde Z\cup (\mathcal Z(q) \cap V)$. We still have $\dim Z < \dim V$ since $q\notin \mathcal I(V)$, and $p$ being polynomial on $X$, properties $(1)$ and $(2)$ hold for $f=\frac{p+h}{q}$.
\end{proof}

Next lemma is crucial for the induction step in the proof of Theorem \ref{thm-loca}.

\begin{lem}\label{lemm3} Let $\pi : X' \to X$ denote the normalization map, and let $\p$ be a prime ideal in $\Pol(X)$. Let $f\in \K(X)$  be a rational function integral over $\Pol(X)$ such that for any $\q\in \RCent \Pol(X)$ included in $\p$, the localization $(f\circ \pi)_{\q} $ along $\q$ of the polynomial function $f\circ \pi$ on $X'$ satisfies
	$$(f\circ \pi)_{\q}\in \Pol(X)_{\q}+ \JRadCe{\Pol(X')_{\q}}.$$
	Then there exist $q\in \Pol(X) \setminus \p$ such that $qf\in \SRR(\Cent X)$.
\end{lem}

\begin{proof} Consider the central part $\pol^{C}(f)$ of the polar locus of $f$, namely
	$$\pol^{C}(f)=\overline{\Cent X \cap \pol(f)}^{Z}.$$
	Denote by $V_1,\ldots,V_s$ the irreducible components of $\pol^{C}(f)$
	which are central in $X$, and set $\q_i=\mathcal I(V_i)$. Assume $\q_i \subset \p$ if and only if $i\leq r'$, with $0\leq r'\leq r$. For any $i>r'$, choose $q_i\in \q_i \setminus \p$ and consider $f_1=f \prod_{i>r'} q_i$.

	For $i > r'$, the function $f_1$ can be extended continuously along $V_i$ by imposing the value zero (the rational function $f$ is locally bounded on $X$).

	For $i\leq r'$, take $Z_i\subset V_i$ given by Lemma \ref{lemm2} applied to $f_1$, and set $Z^1=\cup_{i\leq r'}Z_i$. Remark that $\dim Z^1 < \dim \pol^C(f)$.

	As a consequence :
	\begin{enumerate}
		\item the rational function $f_1$ can be extended continuously along $\Cent X \cap (\pol^C(f)\setminus Z^1)$,
		\item the extension remains rational restricted to any irreducible subset of $\pol^C(f)$ not included in $Z^1$, and central in $X$.
	\end{enumerate}

	We apply now the same construction to $f_1$ in place of $f$, and to $Z^1$ in place of $\pol^C (f)$. We obtain likewise a polynomial function $q_2\notin \p$ vanishing of the irreducible components of $Z^1$ central in $X$ and not containing $\mathcal Z(\p)$, and an algebraic subset $Z^2\subset Z^1$ with $\dim Z^2 < \dim Z^1$ such that :
	\begin{enumerate}
		\item the rational function $f_2=f_1q_2$ can be extended continuously along $\Cent X \cap (Z^1\setminus Z^2)$,
		\item the extension remains rational restricted to any irreducible subset of $Z^1$ not included in $Z^2$, and central in $X$.
	\end{enumerate}
	It remains to repeat finitely many times the construction to achieve the proof. 
\end{proof}

\begin{proof}[Proof of Theorem \ref{thm-loca}] Let $\pi :X'\to X$ be the normalization of $X$. Note that $(\Pol(X)_{\q})'=\Pol(X')_{\q}$ for any prime ideal $\q$ in $\Pol(X)$ since localization and normalization commutes.

	We aim to prove that the natural inclusion $\Pol(X^{s_c})_{\p} \subset (\Pol(X)_{\p})^{s_c}$ is an equality. Let $\phi$ belongs to the right hand side, so that by definition 
	$$\forall \q \in \RCent \Pol(X)_{\p},~~ \phi_{\q} \in (\Pol(X)_{\p})_{\q}+\JRadCe (\Pol(X')_{\p})_{\q}=\Pol(X)_{\q}+\JRadCe \Pol(X')_{\q}.$$
	In other words, there exists $a \in \Pol(X)\setminus \p$ such that $f=a\phi \in \K(X)$ is an integral function verifying : 
	$$\forall \q \in \RCent \Pol(X),~\q \subset \p, ~~ (f\circ \pi)_{\q}\in \Pol(X)_{\q} + \JRadCe \Pol(X')_{\q}.$$
	By Lemma \ref{lemm3}, there exists $q \in \Pol(X)\setminus \p$ such that $qf\in \SRR(\Cent X)$, meaning that $qf$ is in $\Pol(X^{s_c})$ by Theorem \ref{equivrealseminorm}. 
	As a consequence $f$ belongs to $\Pol(X^{s_c})_{\p}$ as required.
\end{proof}

\section{Curves}
The last section is devoted to centrally seminormal curves. In that situation, several particular phenomena appear. 
First of all, as already mentioned, continuous rational functions and hereditarily rational functions coincides on real curves, so the $w_c$-normalization and $s_c$-normalization coincide. We will show moreover that centrally seminormal curves are central. Nonetheless, the irreducible components of a centrally seminormal curve are also centrally seminormal. We end the section with a characterization of the singularities of centrally seminormal curves.

\vskip 2mm

We are going to prove that a centrally seminormal curve is central. In particular, the weak-normalization of any curve, relative to its central locus, becomes a central curve.

\begin{prop}\label{curve-cent} A centrally seminormal curve is central.
\end{prop}

\begin{proof}
Let $X\subset \R^n$ be a centrally seminormal curve. Assume $X$ is not central, with $x\in X$ a non-central point. Let
$Y\to X$ be the finite birational map obtained by normalizing the point
$x$. The fiber of $Y\to X$ over $x$ is empty and $\Cent Y$ is in
bijection with $\Cent X$. This is in contradiction with the universal property of the $w_c$-normalization (Theorem \ref{propunivcentweak}).
\end{proof}

It may happen that an irreducible component of a seminormal complex
algebraic set is not seminormal \cite[Ex. 2.11]{GT} but never for a
curve \cite[Cor. 2.9]{GT}. We prove a similar result for centrally
seminormal real curves.
\begin{prop}
\label{irredcurve}
  Let $X$ be a real algebraic curve. If $X$ is centrally
  seminormal then every irreducible component of $X$ is centrally
  seminormal.
\end{prop}

Notice that the converse of the statement of Proposition
\ref{irredcurve} is false as illustrated by the example of three lines in the plane intersecting in a common point.

\begin{proof}[Proof of Proposition \ref{irredcurve}]
Let $X_1,\ldots,X_t$ be the irreducible components of $X$, with $t\geq 2$. Note that $X$ is central by Proposition \ref{curve-cent} and that $X$ is the union of the central locus of its irreducible components, which intersect two-by-two in at most a finite of points.

Let $f\in\Pol(X_1^{w_c})$, we see the function $f$ both as a
polynomial function on $X_1^{w_c}$ or $X_1'$ and as a continuous
rational function on $\Cent X_1$ which is integral over
$\Pol(X_1)$. We complete $f$ with polynomial functions $p_k\in \Pol(X_k)$ such that 
the rational continuous functions of the $t$-tuple
$(f,p_2,\ldots,p_t)\in\SR(\Cent X_1)\times\cdots\times\SR(\Cent X_t)$ coincide on
the finite (if not empty) sets $\Cent X_i\cap\Cent X_j$ for all
$i,j\in\{1,\ldots,t\}$.
From Proposition \ref{ratcontreducible}, we see that
$(f,p_2,\ldots,p_t)$ induces a continuous rational function
$g\in\SR(\Cent X)$, which is integral over $\Pol(X)$ because $g\in
\Pol(X_1')\times\cdots\times\Pol(X_t')=\Pol(X')$ (Proposition
\ref{ReducedIntegralClosure}). Since $X$ is centrally seminormal then $g$ is polynomial on
$X$ as a consequence of Proposition \ref{unpropnormal}. By continuity, it
follows that $f$
is polynomial on $\Cent X_1$ and we conclude that $X_1$ is centrally
seminormal again by Proposition \ref{unpropnormal}.
\end{proof}

To finish to characterize centrally seminormal curves, we are lead to determine their singular points. In the case of seminormal complex curves, the singularities are only ordinary $k$-fold points. 
We say that a point $x$ of a complex algebraic curve $X$ 
is an ordinary $k$-fold point if $x$ is a point of multiplicity $k$
with $k$ linearly independent tangents. It other words, the singularity at $x$ is analytically
isomorphic to the union of the $k$ coordinate axes in $\C^k$ (see
\cite{Ko} for instance). Then by \cite{Bo}, a complex algebraic curve $X$ is seminormal if and only if the singularities of $X$ are ordinary $k$-fold points.

Coming back to a centrally seminormal real algebraic curve $X$, we know that $\Pol(X)$ is a
seminormal ring (in Traverso's sense). As a consequence $\Pol(X_{\C})$ and $X_{\C}$ are seminormal by \cite[Cor. 5.7]{GT} and therefore the singularities of the complexification $X_{\C}$ are ordinary $k$-fold points.

We will see below that when these complex singularities are moreover real, and with an additional condition it gives a characterization of centrally seminormal real curves. 

\begin{prop}
\label{seminormcurve}
Let $X$ be a real algebraic curve and $\pi':X'\rightarrow X$ be the normalization map.
Then
$X$ is centrally seminormal if and only if the following
properties are satisfied:
\begin{enumerate}
\item the singularities of $X_{\C}$ are ordinary $k$-fold points,
\item the singular points of $X_{\C}$ are real,
\item for any singular point in $X_{\C}$, the fiber $\pi_{\C}'^{-1}(x)$ is totally real.
\end{enumerate}
\end{prop}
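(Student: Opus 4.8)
The plan is to characterize when $X$ is centrally seminormal, which by the curve hypothesis is equivalent to $X$ being centrally weakly-normal, i.e. to $\Pol(X)=\Pol(X^{w_c})$. Since $\Pol(X^{w_c})$ consists of the rational functions integral over $\Pol(X)$ that admit a continuous extension to $\Cent X$, the condition $X=X^{w_c}$ says precisely that every element of $\Pol(X')$ which restricts to a continuous function on the central locus already lies in $\Pol(X)$. I would work locally at each singular point $x$ of $X$ (equivalently of $X_{\C}$, using Remark \ref{rem-comp} and the cited result that $\Pol(X_{\C})$ is seminormal, so the complex singularities are already ordinary $k$-fold points). The strategy is to test weak-normality by examining, for a singular point, whether the indicator-type functions on the branches through $x$ can be continuously extended over $\Cent X$ while staying integral, and to see that centrality of $\Pol(X)$ forces these functions to descend.

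First I would establish the necessity of the three conditions. Condition (1) is immediate from the preceding discussion via seminormality of $X_{\C}$. For condition (2), suppose a singular point $x$ of $X_{\C}$ is not real: then $x$ and its conjugate $\bar x$ are complex points, and I would argue that in the normalization these separate into branches that do not meet the real central locus, so that the seminormalization test never ``sees'' this point — but then the local contribution at $x$ to the obstruction to $\Pol(X)=\Pol(X')$ would survive, contradicting weak-normality. More precisely, an ordinary multiple point with genuinely complex location produces, after normalization, a gluing condition among branches none of which contributes a real central point, and the function identifying those branches is integral and continuous on $\Cent X$ (vacuously on that fiber) but not polynomial, so $X\neq X^{w_c}$. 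For condition (3), a singular point $x$ that is real but whose fiber $\pi_{\C}'^{-1}(x)$ is not totally real contains a pair of conjugate complex preimages; the function separating a real preimage from this conjugate pair is again integral and continuously extendable on the central locus (since the conjugate branch carries no real central point), yet is not in $\Pol(X)$, violating weak-normality.

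For the sufficiency direction I would assume (1)--(3) and show $\Pol(X^{w_c})\subset\Pol(X)$. Take $g\in\Pol(X')$ restricting to a continuous function on $\Cent X'$ that descends through $\pi^{w_c}$; by Proposition \ref{equivdefnormalizations} this means there is $f\in\SR_0(\Cent X)$ with $f\circ\pi'=g$ on $\Cent X'$. At a singular point $x$, condition (1) makes the local ring analytically the $k$ coordinate axes, condition (2) makes $x$ real, and condition (3) makes all $k$ branches real, so each of the $k$ preimages of $x$ in $X'$ lies in $\Cent X'$ and is a real point. The bijectivity of $\pi^{w_c}$ on central loci (Proposition \ref{cor-weak-bij}) combined with continuity of $f$ forces $g$ to take a common value on all branches at $x$; since the seminormality of the abstract local model means polynomial functions on the $k$-fold point are exactly the continuous functions agreeing at the point, $g$ must be (locally, hence globally by properness) polynomial on $X$. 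Summing the local conclusions over the finitely many singular points gives $g\in\Pol(X)$.

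The main obstacle I anticipate is the bookkeeping at a real singular point whose fiber is \emph{not} totally real, needed for the necessity of (3): one must exhibit explicitly an integral rational function that is continuous on the \emph{real} central locus yet polynomial-obstructed, and verify that the conjugate complex branch genuinely fails to impose a constraint on $\Cent X$. This requires care in relating the algebraic gluing data of the seminormalization (the condition $f_{\p}\in A_{\p}+\JRadCe(A'_{\p})$ at the real maximal ideal $\m_x$) to the geometry of real versus complex branches, and in confirming that the central radical $\JRadCe$ correctly ignores the non-real points of the fiber while still detecting the real ones. I expect the totally-real hypothesis to be exactly what makes the residue-field isomorphism in the definition of $w_c$-subintegrality hold at $\m_x$, so the crux is translating condition (3) into that equiresiduality statement.
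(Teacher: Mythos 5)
Your necessity argument is sound and is essentially a function-level translation of what the paper does: where you exhibit an integral rational function that is continuous on $\Cent X$ (because its indeterminacy locus meets no real point for (2), or because it takes a common value on the real preimages of $x$ for (3)) yet fails to be polynomial, the paper instead packages that function into an intermediate curve $Y$ (partial normalization at the pair of conjugate non-real singular points for (2); normalization at $x$ followed by Serre's gluing of the real preimages for (3)) and then invokes Proposition \ref{lvreel}: a finite birational map onto a centrally seminormal set that is bijective on the central loci must be an isomorphism, whereas $Y_{\C}\rightarrow X_{\C}$ is visibly not bijective. The two routes are equivalent; the paper's is shorter because Proposition \ref{lvreel} absorbs the verification that the new functions are continuous, integral and not polynomial.

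The genuine gap is in your sufficiency direction. The assertion that ``the seminormality of the abstract local model means polynomial functions on the $k$-fold point are exactly the continuous functions agreeing at the point, so $g$ must be (locally, hence globally by properness) polynomial'' is precisely the nontrivial content that has to be proved, not a fact you may read off the hypothesis: that a curve all of whose singularities are ordinary $k$-fold points satisfies $\Pol(X_{\C})=\{g\in\Pol(X'_{\C}):\, g \text{ is constant on the fibres of } \pi_{\C}'\}$ is Bombieri's theorem \cite{Bo}, and passing from a statement about the analytic local model (or the completed local ring) to membership in the global polynomial ring requires an actual argument (faithful flatness of completion, or the algebraic characterization of seminormality), not ``properness''. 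You also still need the real descent: a Galois-invariant $g$ lying in $\Pol(X_{\C})$ lies in $\Pol(X)$, which is \cite[Cor. 5.7]{GT}. The paper's sufficiency proof is exactly this chain: (3) gives centrality of $X$, (2) and (3) together reduce central seminormality to Traverso seminormality, (1) gives seminormality of $X_{\C}$ by \cite{Bo}, and \cite{GT} descends it to $\Pol(X)$. Your outline correctly reaches the intermediate claim that $g$ takes a common value on each (totally real) fibre, but the decisive step --- that this forces $g\in\Pol(X)$ --- is asserted rather than proved.
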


\begin{proof}
Assume $X$ is centrally seminormal. Then $X$ is central by Proposition \ref{curve-cent}, and (1) holds by seminormality of $\Pol(X)$ as explained above.

Assume $X_{\C}$ admits as singularities
two complex conjugated points (which are then non-real ordinary
$k$-fold points). We normalize at these two points 
and we get an algebraic curve $Y$ 
such that the map
$Y\rightarrow X$ is birational, finite, and bijective since the real points are not impacted, but it is not an isomorphism (the map $Y_{\C}\rightarrow X_{\C}$ is not
bijective). This is in contradiction with Proposition \ref{lvreel}, so we have proved (2).

Assume now there exists $x\in\Sing(X)$ such that the
$\pi_{\C}'^{-1}(x)$, which contains at least a real point by centrality of $X$, is
not totally real. We normalize at the point $x$
and then we glue together the real points over $x$ (as explained in
\cite{Se}). We get a real algebraic curve $Y$
such that the map
$Y\rightarrow X$ is birational, finite, bijective but not an
isomorphism since $Y_{\C}\rightarrow X_{\C}$ is again not bijective. By Proposition \ref{lvreel} we get a contradiction
and it proves (3). 

Assume now the curve $X$ satisfies the three properties of the
proposition. Remark that (3) implies that $X$ is central. From (2) and (3), it follows that $X$ is
centrally seminormal if and only if $X$ is seminormal. Then (1) implies that $X_{\C}$ is a seminormal algebraic curve by \cite{Bo}. As a consequence $\Pol(X_{\C})$ is
seminormal, which implies that 
$\Pol(X)$ is seminormal by \cite[Cor. 5.7]{GT} as expected.
\end{proof}

In the situation of Proposition \ref{seminormcurve}, we say that the real curve admits only real ordinary $k$-fold singularities.

\begin{rem}
  \begin{enumerate}
   
    \item Property (2) of Proposition \ref{seminormcurve} is
      equivalent with the property of $X$ to be biregularly normal. 
      \item Notice that it follows from (2) and (3) of Proposition \ref{seminormcurve} that a
      centrally seminormal curve has a totally real normalization.
      \end{enumerate}
\end{rem}

We recover from Proposition \ref{seminormcurve} that the nodal curve
$\Z(y^2-x^2(x+1))$ is centrally seminormal, whereas the cuspidal curve
$\Z(y^2-x^3)$ or the union $\Z(xy(x-y))$ of three lines in a plane are not centrally seminormal.

The following examples illustrate the fact that all three conditions in Proposition \ref{seminormcurve} are necessary, even for irreducible curves.

\begin{ex}
\begin{enumerate}
\item Descartes trifolium $X=\Z((x^2+y^2)^2-x(x^2-3y^2))$ is not centrally seminormal by condition (1).

\item The curve $\Z(y^2-(x^2+1)^2x)$ is seminormal but not centrally seminormal, since it has complex singularities therefore does not satisfy condition $(2)$.
\item The plane curve
  $X=\Z((x^2+y^2)^2-x(x^2+3y^2))$ admits a unique singular point at the origin, with three distinct complex tangents only one of which is real. The rational function $f=y^3/x$ is integral over $\Pol(X)$ and admits a continuous extension at the origin. The seminormalization $Y$ of $X$, given by $\Pol(Y)=\Pol(X)[y^3/x]$, is seminormal and central, but not centrally seminormal because the fiber of the map $X'_{\C}\to Y_{\C}$
  over the singular point of $Y_{\C}$ is not totally real.

\end{enumerate}
\end{ex}

The notion of central seminormalization enables to initiate the
classification of central real algebraic sets up to finite birational
homeomorphism (a finite birational polynomial map which is an
homeomorphism for the Euclidean topology), by establishment of the classification in the case of curves. This classification can be seen as a bridge between the topological study of (smooth) real algebraic sets, personalized for curves by the first part of the famous Hilbert sixteen problem, and birational geometry. 

\begin{thm} Let $X\subset \R^n$ be a central algebraic curve. Then $X$
  is finitely birationally homeomorphic to a central algebraic curve with only real ordinary $k$-fold points.
\end{thm}

\begin{proof}
The finite birational model homeomorphic to $X$ having only ordinary $k$-fold real singularities is provided by its central seminormalization.
\end{proof}

\begin{rem} \begin{enumerate}
\item We can provide an almost similar statement with non-necessarily central
  curves, by performing the central seminormalization at the central
  singular points, and performing the seminormalization at the
  isolated singular points. This approach uses the fact that the
  classical and central seminormalizations are local notions for curves.\\
\item A consequence of the classification is that a topologically
  smooth real algebraic curve is finitely birationally homeomorphic to a nonsingular real algebraic curve.
\end{enumerate}
\end{rem}

\vskip 10mm


We end the paper with the promised example showing that there does not exist in general a biggest algebraic set in finite birational bijection with a given algebraic set, contrary to the situation in complex algebraic geometry.

\begin{ex}\label{ex-final} We are going to construct a curve $X$ which does not admit a  biggest algebraic curve in finite birational bijection with itself. We produce $X$ from a nonsingular real curve $Y$ by contracting pairs of complex conjugated points. The key point is to choose for $Y$ a curve without
non-trivial automorphism, like for instance $Y=\PP_{\R}^1\setminus
\{4\;\rm{real\;points}\}$.

The construction goes as follows. Let $(P_1,\overline{P_1})$ and
$(P_2,\overline{P_2})$ be two distinct couples of conjugated points of
$Y_{\C}$. Let $Y_i$ be the nodal curve
obtained from $Y$ by gluing together $P_i$ and $\overline{P_i}$, for $i\in\{1,2\}$, and
$X$ be the curve obtained from $Y$ by gluing together the four points
$P_1,P_2,\overline{P_1},\overline{P_2}$. The curves $Y_1,Y_2$ and $X$ are real curves
with a unique singular point which is an isolated real point, and $Y$ is their common normalization :  
$$\begin{array}{ccccc}
Y&\rightarrow&Y_1\\
\downarrow&&\downarrow\\
Y_2&\rightarrow&X.\\
\end{array}$$
Assume there exists a biggest element $Z$ among the real
algebraic curves with a finite birational map onto $X$ that is bijective in restriction to the real points. Remark that $Z$ has necessarily a unique singular point which is real and isolated.

By assumption on $Z$, there exist finite birational maps $Z\to Y_1$ and $Z\to Y_2$, and they induce bijections at the level of complex points. Moreover $Y_1$ and
$Y_2$ are seminormal (as complex curves) and
thus the maps $Z\to Y_1$ and $Z\to
Y_2$ are automatically isomorphisms. We derive an isomorphism $\varphi:Y_1\to Y_2$
which necessarily maps the unique singular point of $Y_1$ onto the unique singular
point of $Y_2$.

We are going to lift $\varphi$ to a non-trivial automorphism of $Y$, obtaining therefore a contradiction. Denote by $\pi_1:Y\to Y_1$ and $\pi_2:Y\to Y_2$ the
normalization maps. Since $\varphi\circ\pi_1:Y\to Y_2$ is finite and
birational, there exists a finite and birational map $\psi:Y\to Y$ such that
$\pi_2=\varphi\circ\pi_1\circ\psi$ by the universal property of the normalization of  $Y_2$. Note that
$\psi(\{P_2,\overline{P_2}\})=\{P_1,\overline{P_1}\}$, so that $\psi$ is non trivial. Moreover it is an automorphism, whose inverse is constructed similarly by applying the universal property of the normalization of $Y_1$ to the map $\varphi^{-1}\circ\pi_2 : Y \to Y_1$.
\end{ex}



\begin{thebibliography}{ABR2}     

\bibitem{ABT} F. Acquistapace, F. Broglia, A. Tognoli, 
{\it Sulla normalizzazione degli spazi analitici reali}, 
Boll. Un. Mat. Ital. (4) 12, no. 1-2, 26--36, (1975)

\bibitem{ABR} C. Andradas, L. Br\"ocker, J.M. Ruiz, Constructible sets
  in real geometry, Springer, (1996)

\bibitem{AB} A. Andreotti, E. Bombieri,
{\it Sugli omeomorfismi delle variet\`a algebriche}, 
Ann. Scuola Norm. Sup Pisa (3) 23, 431--450, (1969)

\bibitem{AN} A. Andreotti, F. Norguet, 
{\it La convexit\'e holomorphe dans l'espace analytique des cycles
  d'une vari\'et\'e alg\'ebrique}, 
Ann. Scuola Norm. Sup. Pisa (3) 21, 31--82, (1967)

\bibitem{AM} M. F. Atiyah, I. G. Macdonald, Introduction to
  commutative algebra, Reading: Addison-Wesley, (1969)



\bibitem{BCR} J. Bochnak, M. Coste, M.-F. Roy, 
{\it Real algebraic
  geometry}, Springer, (1998)

\bibitem{Bo} E. Bombieri,
{\it Seminormalit\`a e singolarit\`a ordinarie}, 
Symposia Mathematica XI, Academic Press, New York, 205-210, (1972)



\bibitem{Da} E. D. Davis, {\it On the geometric interpretation of
    seminormality}, 
Proc. Amer. Math. Soc. (1) 68, 1-5, (1978)

\bibitem{vdd} L. van den Dries,
\textit{Tame topology and o-minimal structures},   London Mathematical Society
 Lecture Note Series 248 (1998), Cambridge University Press,
Cambridge

\bibitem{FHMM} G. Fichou, J. Huisman, F. Mangolte,
  J.-P. Monnier,
{\it Fonctions r\'egulues}, J. Reine angew. Math., 718, 103-151 (2016)

\bibitem{FMQ} G. Fichou, J.-P. Monnier, R. Quarez,
{\it Continuous functions on the plane regular after one blowing-up}, 
Math. Z., 285, 287-323, (2017)

\bibitem{FMQ-futur} G. Fichou, J.-P. Monnier, R. Quarez, {\it
    Integral closures in real algebraic geometry}, 
arXiv:1810.07556


\bibitem{GT} S. Greco, C. Traverso,
{\it On seminormal schemes},
Composition Math. (3) 40, 325-365, (1980)

\bibitem{KoKo} J. Koll\'ar, S. Kov\'acs,
{\it Singularities of the minimal model program},
 Cambridge Tracts in Mathematics, 200. Cambridge University Press, Cambridge, (2013)

\bibitem{Ko} J. Koll\'ar,
{\it Variants of normality for Noetherian schemes},
Pure Appl. Math. Q. (1) 12, 1-31 (2016)

\bibitem{KuKu2} J. Koll\'ar, W. Kucharz, K. Kurdyka, 
{\it Curve-rational functions},
Math. Ann. 370, 1-2, 39-69 (2018)

\bibitem{KN} J. Koll\'ar, K. Nowak,
{\it Continuous rational functions on real and p-adic varieties},
Math. Z. 279, 1-2, 85-97 (2015).


\bibitem{Kre} G. Kreisel, Review of Ershov, Zbl. 374, 02027 (1978)

\bibitem{Ku} W. Kucharz,
{\it Rational maps in real algebraic geometry},
Adv. Geom.~\textbf{9} (4), 517-539, (2009)

\bibitem{KuKu1} W. Kucharz, K. Kurdyka, 
{\it Stratified-algebraic vector bundles},
J. Reine Angew. Math. 745, 105-154 (2018)

\bibitem{KP} K. Kurdyka, A. Parusi\'nski,
{\it Arc-symmetric sets and arc-analytic mappings} in Arc spaces and additive invariants in
real algebraic and analytic geometry,
Panor. Synth\`eses. Soc. Math. France 24, 33-67 (2007)

\bibitem{LV} J. V. Leahy, M. A. Vitulli, 
{\it Seminormal rings and weakly normal varieties}, 
Nagoya Math. J. (82), 27-56, (1981)



\bibitem{Man} F. Mangolte,
{\it Vari\'et\'es alg\'ebriques r\'eelles},
Cours sp\'ecialis\'es, collection SMF 24, (2017)


\bibitem{MR} M. G. Marinari, M. Raimondo, 
{\it Integral morphisms and homeomorphisms of affine k-varieties},
Commutative algebra, Lecture Notes Pure Appl. Math. 84, Marcel Dekker (1983)

\bibitem{Ma} H. Matsumura,
{\it Commutative algebra}, Cambridge studies in advanced mathematics
8, (1989)

\bibitem{Mo} J.-P. Monnier,
{\it Semi-algebraic geometry with rational continuous functions},
Math. Ann. 372, 3-4, 1041-1080 (2018). 


\bibitem{Pa} R. Palais, {\it Some analogies of Hartogs' theorem in an algebraic setting}, Am. J.
Math., 100 (1978), 387-406

\bibitem{Ra} M. Raimondo, 
{\it On normalization of Nash varieties},
Rend. Sem. Mat. Univ. Padova 73, 137-145 (1985)



\bibitem{Se} J.-P. Serre,
{\it Groupes alg\'ebriques et corps de classes}, deuxi\`eme \'edition,
Publications de l'Institut de Math\'ematique de l'Universit\'e de
Nacango No.n VII, Actualit\'es Scientifiques et Industrielles,
vol. 1264, Paris, Hermann, (1975)


\bibitem{Sw} R.G. Swan,
{\it On seminormality},
J. Algebra 67, 210-229, (1980)

\bibitem{T} C. Traverso,
{\it Seminormality and Picard group}, 
Ann. Scuola Norm. Sup. Pisa (3) 24, 585--595, (1970)

\bibitem{Vcor} M. A. Vitulli, 
{\it Corrections to "Seminormal rings and weakly normal varieties"}, Nagoya Math. J.
Vol. 107 (1981), 27-56


\bibitem{V} M. A. Vitulli, 
{\it Weak normality and seminormality}, 
Commutative algebra-Noetherian and non-Noetherian perspectives, 441-480, Springer, New York, (2011)

\end{thebibliography}
\end{document}